\documentclass[leqno,11pt,letterpaper, english]{amsart}
\usepackage[usenames,dvipsnames]{color}
\usepackage[colorlinks=true,linkcolor=Red,citecolor=Green]{hyperref}
\usepackage{amsmath,amssymb,amsthm,graphicx,url}
\usepackage{epstopdf}
\usepackage{color}
\usepackage{dsfont}
\usepackage[T1]{fontenc}
\usepackage{natbib}
\usepackage{comment}

\newcommand{\xqedhere}[1]{%
	\rlap{%
		\hbox to#1{%
			\hfil
			\llap{%
				\ensuremath{\square}
			}%
		}%
	}%
}

\def\pasdegrille{\let\grille = \pasgrille}

\def\aat#1#2#3{
	\divide \dimen1 by 48 \dimen3=\dimen1 \multiply \dimen1 by #1
	\advance \dimen1 by -\dimen3 \divide \dimen1 by 101 \multiply
	\dimen1 by 100 \divide \dimen2 by \count11 \multiply \dimen2 by #2
	\setbox0=\hbox{#3}\ht0=0pt\dp0=0pt
	\rlap{\kern\dimen1 \vbox to0pt{\kern-\dimen2\box0\vss}}\dimen1= \wd1
	\dimen2=\ht1}
\def\pasgrille{
	\count12= \dimen1 \divide \count12 by 50 \divide \dimen2 by
	\count12 \count11 =\dimen2 \ \divide \dimen1 by 48
	\setlength{\unitlength}{\dimen1} \smash{\rlap{\ }} \dimen1= \wd1
	\dimen2=\ht1 }
\def\grille{
	\count12= \dimen1 \divide \count12 by 50 \divide \dimen2 by
	\count12 \count11 =\dimen2 \ \divide \dimen1 by 48
	\setlength{\unitlength}{\dimen1}
	\smash{\rlap{\graphpaper[1](0,0)(50, \count11)}} \dimen1= \wd1
	\dimen2=\ht1 }

\pasdegrille

\newcommand{\be}{\begin{equation}}
	\newcommand{\ee}{\end{equation}}

\newcommand{\D}{{\mathcal D}}

\newcommand{\nn}{\nonumber} 

\theoremstyle{plain}

\newtheorem{thm}{Theorem}

\newtheorem{prop}{Proposition}[section]

\newtheorem{lem}[prop]{Lemma}

\theoremstyle{definition}

\newtheorem{defn}[prop]{Definition} 

\numberwithin{equation}{section}

\def\squarebox#1{\hbox to #1{\hfill\vbox to #1{\vfill}}}

\usepackage{amsxtra}

\ifx\pdfoutput\undefined
\DeclareGraphicsExtensions{.pstex, .eps}
\else
\ifx\pdfoutput\relax
\DeclareGraphicsExtensions{.pstex, .eps}
\else
\ifnum\pdfoutput>0
\DeclareGraphicsExtensions{.pdf}
\else
\DeclareGraphicsExtensions{.pstex, .eps}
\fi
\fi
\fi

\title[Semiclassical localization of Schrödinger's eigenfunctions]{Semiclassical localization of Schrödinger's eigenfunctions}

\author[S.~Campagne]{Sébastien Campagne}
\address{Universit{\'e} Paris-Saclay, Math{\'e}matiques, UMR 8628 du CNRS, B{\^a}t 307, 91405  Orsay Cedex, France,   and Institut Universitaire de France}
\email{Sebastien.campagne@universite-paris-saclay.fr}

\date{\today}

\usepackage{amssymb}
\usepackage{amsmath, amsthm, amsopn, amsfonts}

\def\11{{\rm 1~\hspace{-1.4ex}l} }
\def\R{\mathbb R}

\def\S{\mathbb S}
\def\H{\mathbb H}

\begin{document}    

\begin{abstract}
This article addresses the microlocalization of eigenfunctions for the semiclassical Schr\"odinger operator $-h^2\Delta+V$ on closed Riemann surfaces with real bounded potentials. Our primary aim is to establish quantitative bounds on the spatial concentration of these eigenfunctions, extending classical results, typically restricted to smooth potentials, to the more general case where the potential is merely bounded.


Our main result provides an explicit exponential bound for the $L^2$-norm of eigenfunctions on the entire surface in terms of their $L^2$-norm on an arbitrary open subset with an exponential weight of $Ch^{-1}\log(h)^2$. This bound improves upon previous estimates for non-smooth potentials that was an exponential weight of $Ch^{-4/3}$. Our proof is based  on a recent approach of the Landis conjecture develop by \cite*{logunov2020landisconjectureexponentialdecay}.
\end{abstract}

\ \vskip -1cm \noindent\hfil\rule{0.9\textwidth}{.4pt}\hfil \vskip 1cm 
	\maketitle
	

\tableofcontents

\section{Introduction}	
	In this article, we study the localization properties of eigenfunctions $u_h$ for the semiclassical Schrödinger operator $P_V$ defined on a Riemann surface $(M,g)$. Specifically, we consider the equation:

	\begin{equation}
		\label{eq: Schrodinger equation}
		(P_V-E)u_h:= (-h^2\Delta_g +V- E)u_h=0
	\end{equation}

	where $E\in I$ is an energy level ranging over a compact set $I\subset \R$ and $V\in L^\infty(M,\R)$ is a real-valued bounded potential. Our goal is to understand how the geometry of $M$ and the regularity of the potential $V$ affect the spatial concentration of $u_h$ in the semiclassical limit $h \rightarrow 0$.




	When the potential $V$ is smooth, it is well known that one can control the global behaviour of eigenfunctions $u_h$ using their values on any open subset $U \subset M$. More precisely, for any energy $E \in I$, there is a constant $C>0$ such that, for $h$ small enough, any eigenfunction $u_h$ of $P_V$ with energy $E$ satisfies:


	\begin{equation}
		\label{eq: controlefonctpropreVlisse}
		\int_M \Big(|u_h|^2 + |h\nabla u_h|^2 \Big)d\mu_g \leq e^{C/h} \int_U \Big(|u_h|^2 + |h\nabla u_h|^2 \Big)d\mu_g.
	\end{equation}
	where $\mu_g$ is the volume form on $(M,g)$.
	 This estimate relies on analytic tools, in particular Carleman estimates. More precisely, for any fixed radius $R > 0$ and energy $E \in \R$, one has

	\begin{equation}
		\int_{B(0, R)} \Big(|u|^2 + |h\nabla u|^2 \Big)dx \leq  e^{C/h} \int_{B(0, R)} |(P_V - E)u|^2dx
	\end{equation}
	for  $h$ small enough and for all $u$ smooth function with compact support in the ball $B(0,R)$ of the plan $\R^2$. See, for instance \cite{lerousseauCarlemanEstimatesElliptic2012} or in the book \cite*{LeRousseau2022}.
	
What can we say when the potential is not smooth?
In dimension one, we can recover the same result when $V\in L^\infty$, see for instance \cite{dyatlovMathematicalTheoryScattering2019} in the proof of theorem 2.32. We can do even better for certain classes of potentials thanks to the Agmon distance  \citep{laurentUniformObservationSemiclassicalSchrodinger2023}.
	In dimension $\geq 2$, Lipschitz regularity is sufficient for \eqref{eq: controlefonctpropreVlisse}; see for example \cite{datchev2014quantitativelimitingabsorptionprinciple}, \cite{shapiro2017semiclassicalresolventboundsdimension} or \cite{dyatlovMathematicalTheoryScattering2019}. In the general case, \cite{kloppSemiclassicalResolventEstimates2019a} showed that if $V \in L^{\infty}$, then

	\begin{equation}
		\label{eq:KloppVogel}
		\int_{B(0, R)} \Big(|u|^2 + |h\nabla u|^2 \Big)dx \leq  e^{C/h^{4/3}} \int_{B(0, R)} |(P_V - E)u|^2dx.
	\end{equation}
	for  $h$ small enough and for all $u$ smooth function with compact support in $B(0,R)$. This result was then improved by \cite{vodevSemiclassicalResolventEstimates2020} for Hölder potentials.
	 In \cite{Campagne2026a}, inspired by the work of \cite{vodevSemiclassicalResolventEstimates2020},  we proved a new estimate for eigenfunctions on Riemann surfaces that takes into account the regularity of $V$: In the same conditions as \ref{eq: controlefonctpropreVlisse} with $V$ bounded and not necessarily smooth, one has

		\begin{equation}
			\int_M \Big( |u_h|^2 + |h\nabla u_h|^2 \Big)d\mu_g \leq  e^{C\beta(h)} \int_U\Big( |u_h|^2 + |h\nabla u_h|^2\Big)d\mu_g,
		\end{equation}
		where
		\begin{equation}
			\beta(h) = \frac{1}{h^{4/3}} \sup_{x_0 \in M} \sup_{x \in B(x_0, h^{2/3} \kappa)} |V(x) - V(x_0)|^{1/2},
		\end{equation}
		and $\kappa > 0$ is a fixed small constant.  This result gives us a continuous control between Lipschitz potentials ($\beta(h)\simeq h^{-1}$)  and only bounded potentials ($\beta(h)\simeq h^{-4/3}$).

	Our goal in this article is to improve \eqref{eq:KloppVogel} by using a recent approach developed by \cite* {logunov2020landisconjectureexponentialdecay} to study a similar problem. We show the following:

	\begin{thm}
		Let $(M,g)$ be a closed Riemann surface with the volume form $\mu_g$, and let $U \subset M$ be an open subset. Let $E \in I\subset \R$, and assume that $u_h \in L^2(M,\R)$ satisfies
		\[
		(-h^2\Delta_g +V- E)u_h = 0,
		\]
		with $V \in L^\infty(M, \R)$. Then there exist constants $C > 0$ and $h_0 > 0$, independent of $u_h$, such that for all $0 < h < h_0$, we have
		\begin{equation}
			\int_M \Big(|u_{h}|^2+ |h\nabla u_{h}|^2\Big)d\mu_g    \leq \exp\left( C \frac{\log(h)^{2}}{h} \right) \int_U \Big(|u_{h}|^2+ |h\nabla u_{h}|^2 \Big)d\mu_g.
		\end{equation}
	\end{thm}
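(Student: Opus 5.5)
Our plan is to adapt the Logunov–Malinnikova–Nadirashvili–Nazarov (LMNN) approach to the Landis conjecture in the plane. Rescaling by $h$ turns the equation into $\Delta w = W w$ with $\|W\|_\infty \lesssim 1$ on a domain of diameter $\sim 1/h$. The argument then has three steps.

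\textbf{Step 1: local reduction.} Fix a coordinate disc in isothermal coordinates; on a surface these always exist, so $\Delta_g = \rho^{-1}\Delta$ with $\rho$ smooth and positive. Setting $x = h y$, the function $w(y) = u_h(hy)$ solves
\[
\Delta w = W w, \qquad W(y) = \rho(hy)\bigl(V(hy) - E\bigr),
\]
with $|W| \le C_0$, on a disc $D_R$ of radius $R \sim 1/h$. It then suffices to prove a three-balls / propagation-of-smallness inequality on $D_R$: if $\|w\|_{L^\infty(D_R)} \le 1$ and $\|w\|_{L^\infty(D_1(z_0))} \ge \varepsilon$, then for every point at distance $\lesssim R/2$ the sup over a unit disc around it is at least $\exp(-C R \log^2 R)\,\varepsilon^{\theta}$. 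A chain of boundedly many overlapping discs covering $M$ and linking to $U$, combined with elliptic estimates converting $L^\infty$ to $L^2$ and $h\nabla$ norms (with polynomial losses in $h$), gives the theorem with weight $\exp(C h^{-1}\log^2 h)$.

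\textbf{Step 2: the LMNN machinery on $D_R$.} Following \cite{logunov2020landisconjectureexponentialdecay}, we first puncture the domain. Choose a $\delta$-separated set of small discs of radius $\sim \delta$ with $\delta \sim c/\log R$, one placed inside every unit square. On the complement $\Omega$ of the punctures, we want a positive multiplier $\varphi$ with $\Delta\varphi = W\varphi$, $\varphi \asymp 1$ up to $\exp(O(R\cdot\text{stuff}))$. Such a $\varphi$ is available because the punctures kill the first Dirichlet eigenvalue obstruction: the domain has Poincaré constant $\lesssim \delta^2 \log(1/\delta)$, which beats $C_0$.

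The quotient $f = w/\varphi$ then satisfies the divergence-form equation $\mathrm{div}(\varphi^2 \nabla f) = 0$. By a quasiconformal change of variables (Beltrami equation with ellipticity controlled by $\varphi$), $f$ becomes $f = \mathrm{Re}\, g\circ \Phi$ with $g$ harmonic on the image of $\Omega$. The harmonic function in the punctured image domain is then handled by Hadamard three-circles type estimates plus the control on the small holes, where $w$ is at most bounded and each hole costs a factor $\exp(O(\log(1/\delta)))$ per unit length.

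\textbf{Step 3: bookkeeping.} The total loss is $\exp(C R \cdot \log R \cdot \log(1/\delta))$, giving $\exp(C h^{-1} \log^2 h)$.

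\textbf{Main obstacle.} The hard part is the construction of $\varphi$ with quantitative bounds over a region of size $R$ rather than a fixed region, uniformly for merely bounded real $W$. In the Landis setting $W \ge 0$ is used, whereas here $W$ has a sign (here $V - E$ may be negative in classically allowed regions), so the positivity of $\varphi$ must come purely from the punctures' Poincaré inequality. Controlling $\log \varphi$ in $W^{1,p}$, so that the quasiconformal map has distortion bounded per unit square and the Hölder distortion of $\Phi$ stays within $\exp(O(R))$, is the step I expect to require the most care. I would first try to solve $\Delta\varphi = W\varphi$ with $\varphi = 1$ on the outer boundary via a Neumann-series argument on each unit square, then glue the local solutions.
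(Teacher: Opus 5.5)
There is a genuine gap in Step 2: your perforation ignores the nodal set of $w$, and the LMNN scheme depends on it. In the paper (Section 2.3, following LMNN) the zero set $Z(w)$ is itself removed. The discs are then a maximal family in $D_R\setminus Z(w)$, at distance at least $C\delta$ from each other and from $Z(w)$. This gives three things your configuration does not.
(i) $\Omega=D_R\setminus(Z(w)\cup\bigcup_j D_j)$ has Poincar\'e constant $\lesssim\delta^2$. This is because $Z(w)\cup\bigcup_j D_j$ is $C\delta$-dense, and every circle of radius $<r_0$ around a zero of $w$ meets $Z(w)$ (small-ball maximum principle, Lemma 2.4).
(ii) $f=w/\varphi$ solves $\mathrm{div}(\varphi^2\nabla f)=0$ across $Z(w)$, since $w$ vanishes there.
(iii) Most importantly, $f$, and hence the harmonic function obtained after the quasiconformal change, has constant sign on each $5D_j\setminus D_j$.

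Property (iii) is exactly the hypothesis of the toy-problem estimate. There is no three-circles theorem on a perforated domain. Instead one runs a Carleman estimate with weight $e^{k\omega}$. The cut-off errors on $3D_j\setminus 2D_j$ are absorbed into the main term on $4D_j\setminus 3D_j$ by combining Harnack's inequality, which needs the sign, with the gain $e^{ck\delta/R}$ of the weight across one hole. Without (iii) the comparison fails: $\mathrm{Re}((z-z_0)^N)$ with $z_0$ just outside $3D_j$ and $N$ large is arbitrarily small there compared with its size on $3D_j\setminus 2D_j$. So your claim that each hole costs $\exp(O(\log(1/\delta)))$ per unit length has no mechanism behind it. Note also that one disc of radius $\delta$ per unit square gives a Poincar\'e constant of order $\log(1/\delta)$, not $\delta^2\log(1/\delta)$, so it does not beat $\|W\|_\infty$; the spacing must be of order $\delta$.

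Your ``main obstacle'' is therefore misplaced. The point of LMNN is precisely to handle real potentials of either sign, and the positivity of $\varphi$ comes from (i). Once $k^2\|W\|_\infty\ll 1$ for the Poincar\'e constant $k^2$ of $\Omega$, Lemma 2.2 gives a global $\varphi=1+\tilde\varphi$ with $\tilde\varphi\in W^{1,2}_0(\Omega)$ and $\|\tilde\varphi\|_\infty\lesssim\delta^2$. No gluing is needed, and gluing local solutions would not solve the equation anyway. Uniform closeness to $1$ is what matters: it gives a Beltrami coefficient $|\mu|\lesssim\delta^2$, hence $K\le 1+C\delta^2$ and harmless Mori distortion at scale $\delta$. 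By contrast, $\varphi\asymp 1$ only up to $\exp(O(R))$ would not even yield bounded dilatation.

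Your bookkeeping is also off. The loss is $e^{Ck}$, where the Carleman parameter must satisfy $k\delta/R\gtrsim\log k$. So $k\sim R\delta^{-1}\log(R/\delta)\sim R\log^2 R$ for $\delta\sim 1/\log R$. The expression $R\log R\log(1/\delta)$ does not come out of the argument.

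What is genuinely different, and sound, in your plan is the $L^\infty$ formulation at the rescaled unit scale. Sup norms are invariant under composition with the quasiconformal map. So once the perforation is done correctly you would bypass the paper's Jacobian control (Ahlfors' $L^p$ bound with $p\sim\log(1/h)$, Gagliardo--Nirenberg, Caccioppoli). You could also fill the holes by a local maximum principle instead of the small-ball Carleman estimate. Since the paper's choice $\epsilon(h)=c/\log(R_0/h)$ is dictated by that $L^p$ step, your route might even allow a larger $\delta$.
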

We note that the exponent $2$ on the logarithm could probably be improved with further work. 


		%
			%





The work of \cite*{logunov2020landisconjectureexponentialdecay} was originally focus on the Landis conjecture.
	As a reminder, the Landis conjecture (see \cite{Kondratev1991}) deals with how quickly solutions $u \neq 0$ to

	\begin{equation}
		(-\Delta+V)u=0
	\end{equation}
	can decay at infinity when $V$ is bounded.
	In the 1960s, Landis conjectured that non-zero solutions cannot decay faster than exponentially. This was disproven by \cite{Meshkov1992} for complex potentials: he found a complex potential and a solution that decay faster than $e^{-C|x|^{4/3}}$ in dimension $2$. For real potentials, the conjecture remains open, but recent work of \cite*{logunov2020landisconjectureexponentialdecay} shows that solutions cannot decay faster than roughly $e^{-CR\log(R)^{1/2}}$ in dimension $2$.
	Their proof involves removing holes in the domain to increase the control of how functions oscillate by reducing the Poincaré constant. Then they use quasi-conformal mappings to transform the problem into one about harmonic functions on a domain with holes.


	A key result from their work is as follows:

	Let $(D_{j})_j$ be a collection of disjoint unit balls with mutual separation at least 100, and let $R'>100$. Then for all harmonic function  $\alpha$ on $B(0,R')\setminus\bigcup_j D_{j}$ such that it does not change sign on $(B(0,R')\cap 5D_{j})\setminus D_{j}$. We get

	\begin{equation}
		\int_{B(0,R')\setminus(B(0,R'/2)\bigcup_j(3D_{j}))} |\alpha(x)|^2dx\geq e^{-C R'\log(R')}\int_{B(0,R'/2)\setminus\bigcup_j(3D_{j}) }|\alpha(x)|^2dx.
	\end{equation}

	To go deeper, \cite*{fernandez-bertolinLandisConjectureSurvey2024} did a complete survey of Landis' conjecture. 

The outline of this article is as follows. In Section~\ref{sect: Preliminaire}, we begin by applying the uniformization theorem to closed Riemann surfaces. This allows us to simplify the problem by working within a ball, thus avoiding the complexities of the original manifold (Section~\ref{subsect: Uniformization}). We then adapt some classical results about the Schr\"odinger operator from \cite*{logunov2020landisconjectureexponentialdecay} to the semiclassical Schr\"odinger operator framework (Section~\ref{subsect: preliminaires}). The preliminaries conclude with a discussion on hole cuttings within the ball framework (Section~\ref{sect: decoupespace}).

Section~\ref{sect: de SchrodAHarmonique} is dedicated to transforming eigenfunctions of the Schr\"odinger operator into harmonic functions. This process unfolds in three steps: (1) reduction to a divergence-free equation (Section~\ref{subsect: eqdivsuresptroue}), (2) application of quasiconformal mappings to reduce the divergence-free equation to a harmonic problem (Section~\ref{subsect: chg de var quasi-conforme}), (3) analysis of the geometric impact of these mappings on our framework (Section~\ref{subsect: distortion de lespace}).

In Section~\ref{sect: Ineg Carleman}, we build upon a result from \cite*{logunov2020landisconjectureexponentialdecay} concerning harmonic functions to derive a statement for Schr\"odinger's eigenfunctions. We examine how quasiconformal mapping reverses geometric transformations, enabling us to transfer estimates back to the Schr\"odinger's eigenfunctions at the expense of space distortions(Section~\ref{subsect: retourarriere}). To control the induced deformations, we employ some analytic tools (Section~\ref{subsect: Gag-Nir-Caccio}).  Finally, to address the holes arising from the transformation of the punctured ball, we use Carleman estimates on shrinking sets. This allow us to bound exponential error terms in the statement on Schödinger's eigenfunctions (Section~\ref{subsect: Carleman_classique}).

The proof is concluded in Section~\ref{subsect: Conclusion}, where we synthesize all preceding results.

\section{Preliminaries}
\label{sect: Preliminaire}
In this paper, we study the eigenfunction $u_h \in L_{loc}^1(M,\R)$ associated with the Schrödinger equation on a closed Riemannian surface $(M,g)$:
\[
(-h^2\Delta_g + V - E)u_h = 0 \quad \text{on } (M,g),
\]
where $V \in L^\infty(M, \R)$ and $E \in I$, with $I$ a compact interval of $\R$. Let $U$ be an open subset of $M$ and  $\mu_g$ the volume form on $(M,g)$.

\textbf{Notation:} To avoid overloading notations, we will use the symbol $\lesssim$ to indicate an order of inferiority to within a universal multiplicative constant factor, and we will use this symbol $\simeq$ to indicate that two quantities are equivalent. 

\subsection{Reduction of the problem}
\label{subsect: Uniformization}
Let $(M,g)$ be a closed Riemannian surface. By the Poincaré uniformization theorem, $M$ is conformally equivalent to a unique closed surface with constant curvature $(M,g_0)$: there is $\phi \in \mathcal{C}^\infty(M)$ positive such that $g=\phi g_0$ (see for instance the book of \cite{de2016uniformization}). Therefore, $u_h$ is solution of
\[
(-h^2\phi^{-1}\Delta_{g_0} + V - E)u_h = 0 \quad \text{on } (M,g_0)
\]
and the measures $\mu_g$ and $\mu_{g_0}$ are equivalent.
The surface $(M,g_0)$ is a quotient of one of the following covering surfaces under the free action of a discrete subgroup of its isometry group:

\begin{itemize}
    \item the Euclidean plane $\R^2$ (0 curvature),
    \item the sphere $\S^2$ (1 curvature),
    \item the hyperbolic disk $\H^2$ (-1 curvature) (see Figure~\ref{fig: Poincare_disk}).
\end{itemize}

\begin{figure}
    \centering
    \includegraphics[width=0.4\linewidth]{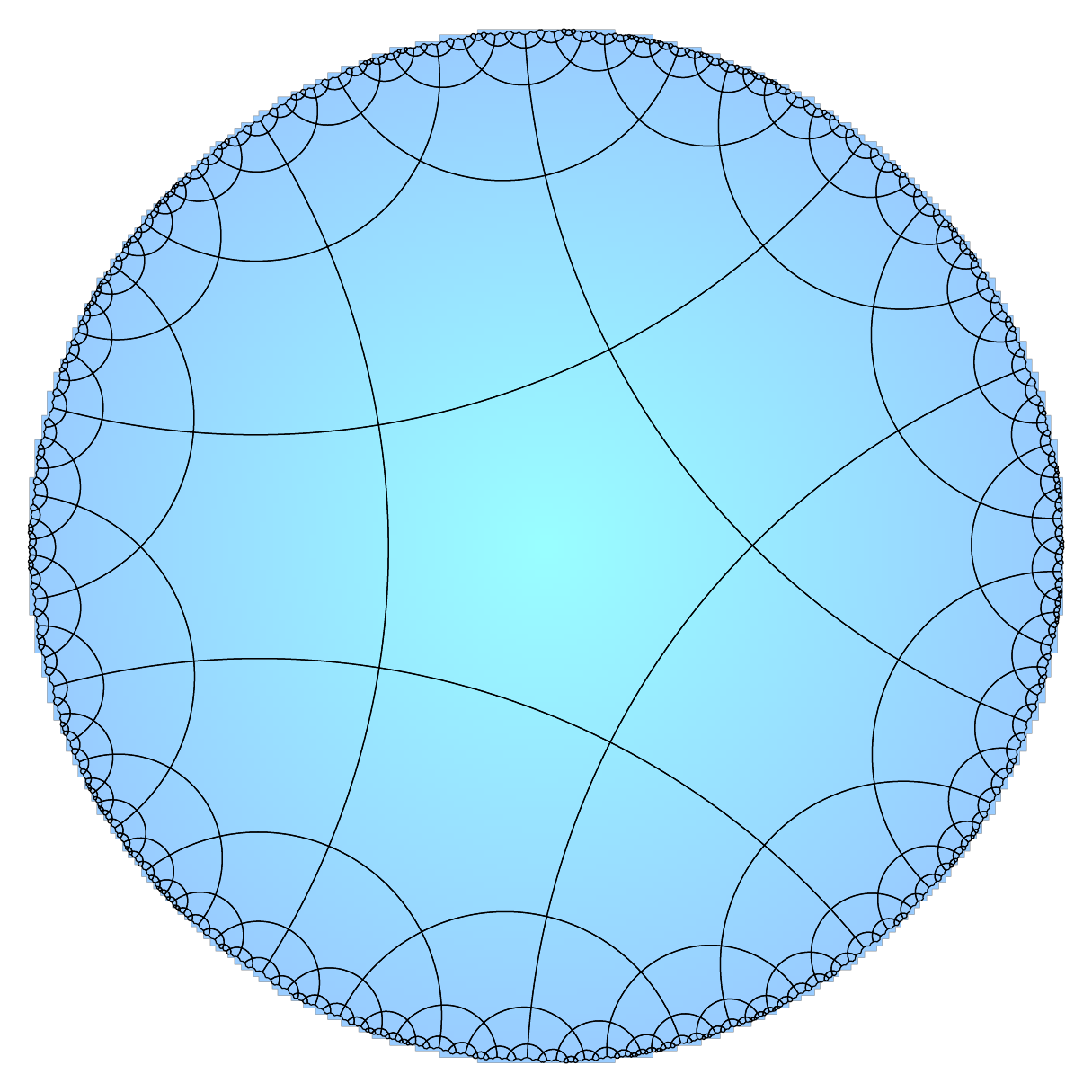}
    \caption{Hyperbolic's disk paved by pentagon (code source \cite{https://doi.org/10.5281/zenodo.7559393})}
    \label{fig: Poincare_disk}
\end{figure}

 Consequently, $(M,g_0)$ can be viewed as a compact quotient $\tilde{M}$ of one of these three surfaces with the inherited metric, and $U$ as an open subset of $\tilde{M}$. These three surfaces can be covered by $\tilde{M}$ via the free action of the discrete subgroup. We can then view the function $u_h$ as a function $u_{per,h}$ on the covering which is periodic by the free action. Moreover, $u_{per,h}$ is a solution of the "periodized" Schrödinger equation on the covering:
\[
(P_{V,per}-E) u_{per,h} := (-h^2\phi_{per}^{-1}\Delta_{g_0} + V_{per} - E)u_{per,h} = 0.
\]

This reduces our analysis to just three cases:

\begin{itemize}
    \item \textbf{Case of the Euclidean plane $\R^2$}: We can replace $U$ with a small ball $B(0, R)$ centered at $0$ inside $U$. It is then sufficient to work in a larger ball $B(0, R_0)$ containing $\tilde{M}$. On this ball, $\mu_{g_0}$ is equivalent to  the Lebesgue measure and $u_{per,h}$ is solution of the equation 
    \[
    (P_{V,per}-E) u_{per,h} := (-h^2\tilde{\phi}\Delta + V_{per} - E)u_{per,h} = 0.
    \]
    with $\tilde{\phi} \in \mathcal{C}^\infty(\overline{B(0,R_0)})$ positive. 

    \item \textbf{Case of the hyperbolic disk $\H^2$}: We can replace $U$ with a small ball $B(0, R)$ centered at $0$ inside $U$. It is then sufficient to work in a larger ball $B(0, R_0)$ within the disk. In fact, the Poincaré disk is equipped with the metric
    \[
    ds^2 = \frac{4\sum_i dx_i^2}{(1 - \sum_i x_i^2)^2},
    \]
    so on $B(0, R_0)$, the metric is equivalent to an Euclidean metric. Consequently, $\mu_{g_0}$ is equivalent to the Lebesgue measure and $\Delta_{g_0}=\frac{(1 - \sum_i x_i^2)^2}{4}\Delta$ with $\Delta$ the Euclidean Laplacian. Thus $u_{per,h}$ is solution of the equation 
    \[
    (P_{V,per}-E) u_{per,h} := (-h^2\tilde{\phi}\Delta + V_{per} - E)u_{per,h} = 0.
    \]
    with $\tilde{\phi} \in \mathcal{C}^\infty(\overline{B(0,R_0)})$ positive. Thus this case can be treated similarly to the Euclidean case.

    \item \textbf{Case of the sphere $\S^2$}: We can replace $U$ with a small ball $B(R)$ centered at the South Pole of $\S^2$ and consider $\tilde{M}$ as $\S^2$. We can then remove a small ball $B(R/2)$ centered on the South Pole inside $B(R)$. The perforated sphere can unfold onto a ball $B(0, R_0)$ with the stereographic projection on the South Pole (see Figure~\ref{fig: Stereographic_projection}). Thus, $B(R)\setminus B(R/2)$ becomes an annulus $A(0, R, R_0)$ at the edge of $B(0, R_0)$. The metric on this projection is
      \[
    ds^2 = \frac{4\sum_i dx_i^2}{(1 + \sum_i x_i^2)^2},
    \]
 so on $B(0, R_0)$, the metric is equivalent to an Euclidean metric. Consequently, $\mu_{g_0}$ is equivalent to the Lebesgue measure and $\Delta_{\tilde{g}_0}=\frac{(1 + \sum_i x_i^2)^2}{4}\Delta$ with $\Delta$ the Euclidean Laplacian. Thus $u_{per,h}$ is solution of the equation 
\[
(P_{V,per}-E) u_{per,h} := (-h^2\tilde{\phi}\Delta + V_{per} - E)u_{per,h} = 0.
\]
with $\tilde{\phi} \in \mathcal{C}^\infty(\overline{B(0,R_0)})$ positive.
    \begin{figure}
        \centering
        \includegraphics[width=0.6\linewidth]{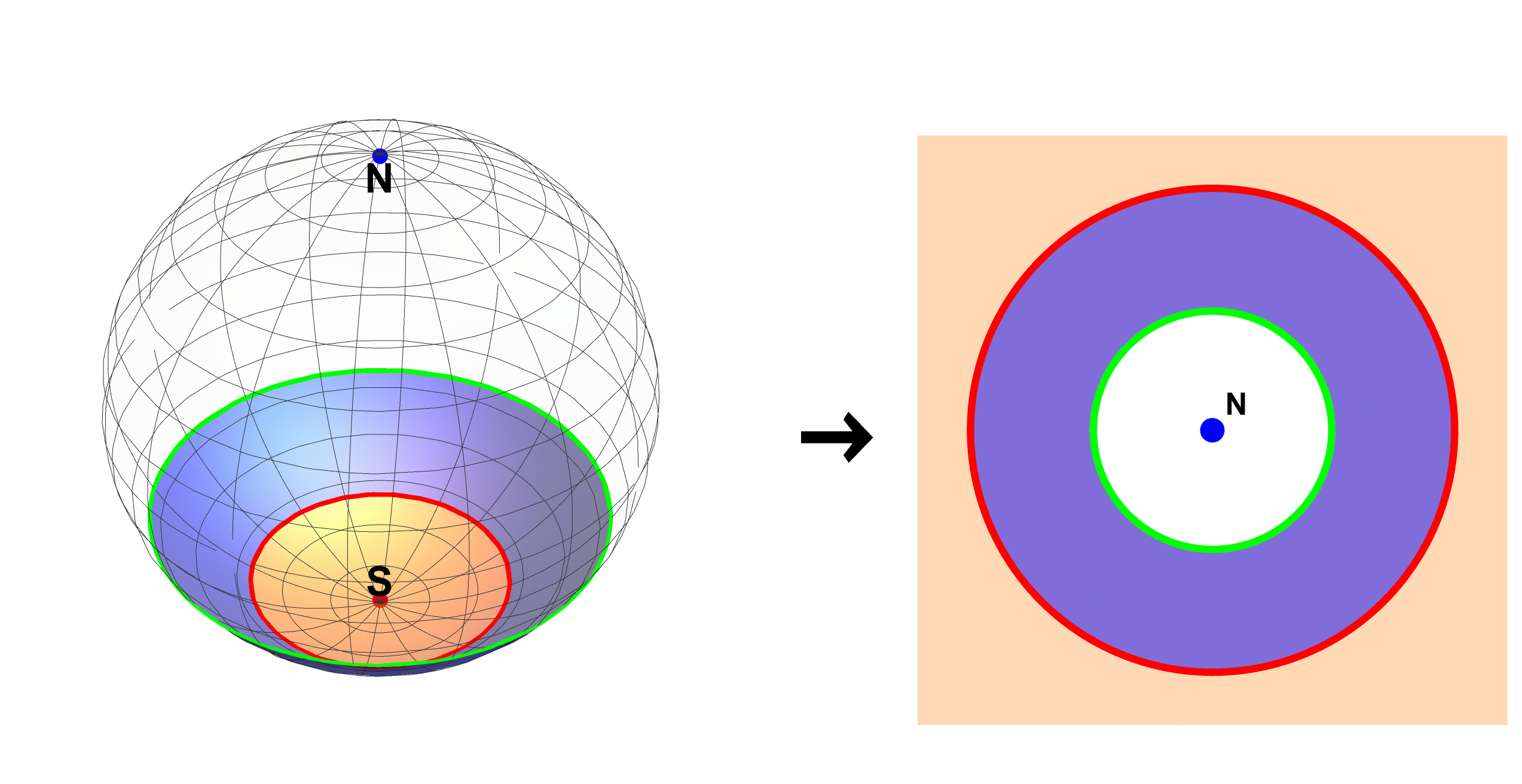}
        \caption{Stereographic projection of $\S^2$ on $\R^2$}
        \label{fig: Stereographic_projection}
    \end{figure}
\end{itemize}
 
 We have reduce these three initial geometric cases to a single unified framework:
a ball $B(0, R_0)$ in a reference space (the plane $\R^2$ after transformation) with the Lebesgue measure. On $B(0, R_0)$, we study  $u_{per,h}$ solution of the equation
\begin{equation}
	\label{eq:equationOrigine}
	(P_{V,per}-E) u_{per,h} := (-h^2\tilde{\phi}\Delta + V_{per} - E)u_{per,h} = 0
\end{equation}
with $\tilde{\phi} \in \mathcal{C}^\infty(\overline{B(0,R_0)})$ positive, $V_{per}\in L^\infty(B(0,R_0),\R)$, $E\in I \subset \R$ compact interval. The function $u_{per,h} \in L_{loc}^1$ is "periodic" and it takes all its values on a set $\tilde{M}$ inside $B(0,R_0)$.

In this framework, two subcases arise for the open set $U$:
\begin{itemize}
    \item $U$ is identified with a ball $B(0, R)$ centered at $0$ and contained within $B(0, R_0)$ (interior case).
    \item $U$ is identified with an annulus $A(0, R, R_0)$ located at the edge of $B(0, R_0)$ (boundary case).
\end{itemize}

 In the equation \ref{eq:equationOrigine}, the function $\tilde{\phi}$ is smooth and bounded by two positive constant on $B(0,R_0)$. Consequently $u_{per,h}$ is solution of $$(-h^2\Delta + \tilde{V})u_{per,h} = 0$$ with $\tilde{V}=\frac{V_{per}-E}{\tilde{\phi}}\in L^\infty$. Thus, for sake of simplicity, we will replace $\tilde{\phi}$ by $1$. This will have no impact on the proof.
\subsection{Some lemmas}
\label{subsect: preliminaires}
Before we proceed, we need to adapt the statements from \cite{logunov2020landisconjectureexponentialdecay} to our semiclassical framework. Complete proofs (if they are not redone) can be found in their article.

\begin{lem}
\label{lem: C1_regularity_solution_Schrodinger}
If $(P_{V,per} -E)u_{per,h} = 0$ on an open set $\Omega_h$ in the sense of distributions and $u_{per,h} \in L_{loc}^1(\Omega_h)$, then $u_{per,h} \in \mathcal{C}^1(\Omega_h)$.
\end{lem}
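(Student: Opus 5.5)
The plan is an elliptic bootstrap. After the reduction of Section~\ref{subsect: Uniformization} (with $\tilde{\phi}$ replaced by $1$), the equation reads $h^2\Delta u_{per,h} = \tilde{V} u_{per,h}$ in $\mathcal{D}'(\Omega_h)$, where $\tilde V\in L^\infty$ and $h>0$ is fixed, so semiclassical uniformity plays no role. Since the statement is local, I fix a ball $B\Subset\Omega_h$ and a cutoff $\chi\in\mathcal{C}_c^\infty(\Omega_h)$ with $\chi=1$ near $\overline{B}$. It then suffices to show $u_{per,h}\in\mathcal{C}^1(B)$.

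\textbf{Step 1 (start).} Set $f=h^{-2}\tilde V u_{per,h}\in L^1_{loc}$ and write
\[
u_{per,h}=\Gamma*(\chi f)+w \quad \text{on } B,
\]
where $\Gamma(x)=\frac{1}{2\pi}\log|x|$ is the fundamental solution in $\R^2$. The difference $w$ is a harmonic distribution on $B$, hence smooth by Weyl's lemma. Since $\Gamma$ and $\nabla\Gamma\sim |x|^{-1}$ lie in $L^p_{loc}$ for every $p<2$, Young's inequality gives $u_{per,h}\in W^{1,p}_{loc}$ for all $p<2$. By Sobolev embedding in dimension $2$, this yields $u_{per,h}\in L^q_{loc}$ for every $q<\infty$.

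\textbf{Step 2 (bootstrap).} Now $f\in L^q_{loc}$ for any finite $q$, because $\tilde V$ is bounded. Calder\'on--Zygmund estimates then give $u_{per,h}\in W^{2,q}_{loc}$ for all $q<\infty$. Taking $q>2$, Morrey's embedding $W^{2,q}\hookrightarrow \mathcal{C}^{1,1-2/q}$ gives $u_{per,h}\in\mathcal{C}^{1,\alpha}_{loc}(\Omega_h)$ for every $\alpha<1$, which is the claim.

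The main obstacle is only the starting point of the argument, since $u_{per,h}$ is merely $L^1_{loc}$ and a priori a distribution. I handle it with the explicit convolution representation above rather than with $L^2$ energy methods. This is what makes the bootstrap possible from $L^1$: the key facts are that $\nabla\Gamma\in L^p_{loc}$ for all $p<2$, and that $W^{1,p}\hookrightarrow L^{2p/(2-p)}$ in two dimensions. These give the gain of integrability that starts the iteration.

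One point in the representation needs care. Near the boundary of the support of $\chi$, $\chi f$ is not equal to $f$, which is why the identity is only asserted on $B$, where $\chi=1$. There $\Delta\big(\Gamma*(\chi f)\big)=\chi f=f$, so $\Delta w=0$ on $B$, as claimed. Finally, if the non-flat form with $\tilde{\phi}$ is kept, the same argument applies after dividing the equation by $\tilde\phi$, as was done in Section~\ref{subsect: Uniformization}.
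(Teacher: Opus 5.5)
Your proof is correct and follows the route the paper indicates: the paper only notes that the statement is local and follows from the fundamental solution of $\Delta$ (citing Fact 6.5 of \cite{logunov2020landisconjectureexponentialdecay}), and your representation $u_{per,h}=\Gamma*(\chi f)+w$ on $B$, with $w$ harmonic, followed by an integrability bootstrap, is a complete version of that sketch. The only difference is that you finish with Calder\'on--Zygmund and Morrey, which gives the stronger $\mathcal{C}^{1,\alpha}$ regularity; one can also conclude more elementarily, since $\Gamma\in L^{p}_{loc}$ for every $p<\infty$ gives $u_{per,h}\in L^\infty_{loc}$ after two applications of Young/H\"older, and then $\nabla\Gamma*(\chi f)$ is continuous because $\nabla\Gamma\in L^1_{loc}$ and $\chi f\in L^\infty$.
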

This lemma is local, so it can be proved with the help of the fundamental solution of $\Delta$ (see \cite{logunov2020landisconjectureexponentialdecay}, Fact $6.5$).

\begin{lem}
\label{lem: small_laplacian_free_solution}
Let $\Omega_h$ be an open subset of $B(0, R_0)$ with Poincaré constant $k_h^2$. Suppose that
\[
h^{-2} k_h^2 \| V_{per} - E \|_{L^\infty} \ll 1.
\]
Then, there exists a weak solution $\varphi_h$ to the equation $(P_{V,per}-E) \varphi_h = 0$ on $\Omega_h$ such that $\varphi_h = 1 + \tilde{\varphi}_h$, where
\[
\tilde{\varphi}_h \in W_0^{1,2}(\Omega_h), \quad \|\tilde{\varphi}_h\|_{L^\infty} \leq C h^{-2} k_h^2 \| V_{per} - E \|_{L^\infty}.
\]
\end{lem}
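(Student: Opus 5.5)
We look for the solution in the form $\varphi_h = 1+\tilde\varphi_h$. Since $(P_{V,per}-E)1 = V_{per}-E$, the equation for $\tilde\varphi_h$ is
\[
-h^2\Delta \tilde\varphi_h + (V_{per}-E)\tilde\varphi_h = -(V_{per}-E), \qquad \tilde\varphi_h\in W^{1,2}_0(\Omega_h).
\]
Dividing by $h^2$ and writing $W_h := h^{-2}(V_{per}-E)$ and $\varepsilon_h := k_h^2\|W_h\|_{L^\infty} = h^{-2}k_h^2\|V_{per}-E\|_{L^\infty}$, this becomes $-\Delta\tilde\varphi_h + W_h\tilde\varphi_h = -W_h$. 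So we are back to the non-semiclassical statement of \cite{logunov2020landisconjectureexponentialdecay}, with potential $W_h$ and smallness parameter $\varepsilon_h\ll 1$. I would redo the argument in this normalization, so that the constant $C$ is visibly independent of $h$.

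\textbf{Construction.} We solve by a Neumann series. Let $G=(-\Delta)^{-1}$ be the Dirichlet inverse Laplacian on $\Omega_h$, where the Poincaré constant is the one in $\|f\|_{L^2}^2\le k_h^2\|\nabla f\|_{L^2}^2$ for $f\in W^{1,2}_0(\Omega_h)$. Set $T f := -G(W_h f)$; the equation then reads $\tilde\varphi_h = T\tilde\varphi_h + T1$. The key estimate is an $L^\infty\to L^\infty$ bound
\[
\|G g\|_{L^\infty}\le C k_h^2\|g\|_{L^\infty}.
\]
This gives $\|T\|_{L^\infty\to L^\infty}\le C\varepsilon_h<1/2$, so $\tilde\varphi_h=\sum_{n\ge1}T^n 1$ converges in $L^\infty$ with $\|\tilde\varphi_h\|_{L^\infty}\le 2C\varepsilon_h$.

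\textbf{Membership in $W^{1,2}_0$.} We need the series to converge in $W^{1,2}_0$ as well. Since $\Omega_h\subset B(0,R_0)$ is bounded, $L^\infty\subset L^2$ and $G:L^2\to W^{1,2}_0$ is bounded. Each term $T^n1$ therefore lies in $W^{1,2}_0$, and the tail $T(\sum_{n\ge0} T^n 1)$ converges in $W^{1,2}_0$ because the inner sum converges in $L^\infty$. The limit is a weak solution.

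\textbf{Main obstacle: the $L^\infty$ bound for $G$.} The $L^2$ theory gives $\|Gg\|_{L^2}\le k_h^2\|g\|_{L^2}$ immediately, but the sup bound needs more. I would argue via the torsion function $w=G1\ge0$. By the maximum principle, $|Gg|\le \|g\|_{L^\infty}\,w$, so it suffices to show $\|w\|_{L^\infty}\le Ck_h^2$.

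This is the known comparison between torsional rigidity and the first Dirichlet eigenvalue: $\|w\|_\infty\le C_d\lambda_1^{-1}$, where $\lambda_1=k_h^{-2}$ (van den Berg–Carroll; alternatively the Moser iteration or heat-kernel argument used in \cite{logunov2020landisconjectureexponentialdecay}). The heat-kernel version goes as follows:
\[
w=\int_0^\infty e^{t\Delta}1\,dt,
\]
with $\|e^{t\Delta}1\|_\infty\le1$ for $t\le k_h^2$. For $t\ge k_h^2$ we combine the $L^2$ decay $e^{-t/k_h^2}$ with ultracontractivity over a time $k_h^2$ (the free heat kernel dominates) to get $\|e^{t\Delta}1\|_\infty\lesssim e^{-c t/k_h^2}$. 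Integrating in $t$ gives $\|w\|_\infty\lesssim k_h^2$.

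Since this is a dimension-only statement, the constant $C$ is universal. Rescaling back by $h^{2}$ yields exactly the claimed bound $\|\tilde\varphi_h\|_{L^\infty}\le Ch^{-2}k_h^2\|V_{per}-E\|_{L^\infty}$.
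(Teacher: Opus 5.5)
Your reduction is the paper's own proof. The paper divides by $h^2$, notes that the smallness parameter is $k_h^2\|h^{-2}(V_{per}-E)\|_{L^\infty}$, and quotes Lemma 3.2 of \cite{logunov2020landisconjectureexponentialdecay}. The rest of your proposal re-proves that cited lemma: a Neumann series in $L^\infty$, $W^{1,2}_0$ membership via $G:L^2\to W^{1,2}_0$, and $\|G\|_{L^\infty\to L^\infty}\le Ck_h^2$ via the torsion function $w=G1$ and the weak maximum principle. That part is sound. Quoting van den Berg--Carroll closes it, since their bound $\|w\|_{L^\infty}\le C\lambda_1^{-1}$ holds on arbitrary open sets with a purely dimensional constant.

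The heat-kernel sketch you offer as an alternative does not, as written, prove that bound. Combining the $L^2$ decay with ultracontractivity at time $k_h^2$ goes through $\|1\|_{L^2(\Omega_h)}=|\Omega_h|^{1/2}$:
\[
\|e^{t\Delta}1\|_{L^\infty}\le \|e^{k_h^2\Delta}\|_{L^2\to L^\infty}\,e^{-(t-k_h^2)/k_h^2}\,|\Omega_h|^{1/2}\lesssim \Big(\frac{|\Omega_h|}{k_h^2}\Big)^{1/2}e^{-t/k_h^2}.
\]
Integrating in $t$ then only gives $\|w\|_{L^\infty}\lesssim k_h^2\log(|\Omega_h|/k_h^2)$.

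This loss matters in this paper. Here $|\Omega_h|\sim R_0^2$ while $k_h^2\lesssim h^2\epsilon(h)^2$, so you would get $\|\tilde\varphi_h\|_{L^\infty}\lesssim \epsilon(h)^2\log(1/h)$ instead of $O(\epsilon(h)^2)$. Then $C_{2p}\|\mu_h\|_\infty$ with $p=\log(1/h)$ would no longer be small.

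The fix is to use the Gaussian off-diagonal decay of the free kernel. Take the geometric mean of two bounds, with $s=k_h^2$:
\begin{itemize}
\item the spectral bound $p_{\Omega_h}(x,y,t)\le e^{-(t-s)/k_h^2}(4\pi s)^{-1}$ for $t\ge s$;
\item the domination bound $p_{\Omega_h}(x,y,t)\le (4\pi t)^{-1}e^{-|x-y|^2/4t}$.
\end{itemize}
Integrating in $y$ gives $\|e^{t\Delta}1\|_{L^\infty}\lesssim (t/k_h^2)^{1/2}e^{-t/(2k_h^2)}$. Hence $\|w\|_{L^\infty}\lesssim k_h^2$, independent of $|\Omega_h|$; this is van den Berg's argument. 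A localized elliptic alternative is the sub-mean-value inequality for $w+|x-x_0|^2/4$ (with $w$ extended by zero) on a disc of radius of order $k_h$, combined with the Poincaré inequality for $\eta w$.
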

To prove this, we rewrite the operator in the form $-\Delta + h^{-2}(V_{per} - E)$ and apply Lemma $3.2$ from \cite{logunov2020landisconjectureexponentialdecay}.

\begin{lem}
\label{lem: utility_1}
Let $\Omega_h$ be a bounded open set and let $f_h \in \mathcal{C}^1(\bar{\Omega}_h)$ be a function such that $f_h = 0$ on $\partial \Omega_h$. Then, $f_h \in W_0^{1,2}(\Omega_h)$.
\end{lem}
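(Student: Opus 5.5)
The plan is to approximate $f_h$ by compactly supported functions obtained by truncating near the level set $\{f_h=0\}$. Since $f_h$ is real-valued here (if complex, treat real and imaginary parts separately), for $\varepsilon>0$ I would set
\[
f_{h,\varepsilon} := \operatorname{sign}(f_h)\,\max\bigl(|f_h|-\varepsilon,0\bigr),
\]
that is, $f_{h,\varepsilon}=G_\varepsilon\circ f_h$ with $G_\varepsilon(t)=\operatorname{sign}(t)(|t|-\varepsilon)_+$. Because $G_\varepsilon$ is $1$-Lipschitz and $f_h\in\mathcal{C}^1(\bar\Omega_h)\subset W^{1,2}(\Omega_h)$ ($\Omega_h$ is bounded, so $f_h$ and $\nabla f_h$ are bounded, hence square integrable), the chain rule for Lipschitz functions gives $f_{h,\varepsilon}\in W^{1,2}(\Omega_h)$ with
\[
\nabla f_{h,\varepsilon}=\mathbf 1_{\{|f_h|>\varepsilon\}}\nabla f_h .
\]

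Next I would show that $f_{h,\varepsilon}$ has compact support in $\Omega_h$. The set $K_\varepsilon=\{x\in\bar\Omega_h : |f_h(x)|\ge\varepsilon\}$ is closed in the compact set $\bar\Omega_h$, hence compact. It does not meet $\partial\Omega_h$, where $f_h=0$, so $K_\varepsilon\subset\Omega_h$. Since $f_{h,\varepsilon}$ vanishes outside $K_\varepsilon$, it is a $W^{1,2}$ function with compact support in $\Omega_h$. Mollifying it with a kernel of radius smaller than $\operatorname{dist}(K_\varepsilon,\partial\Omega_h)$ produces functions in $\mathcal{C}_c^\infty(\Omega_h)$ converging to it in $W^{1,2}$. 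Therefore $f_{h,\varepsilon}\in W_0^{1,2}(\Omega_h)$.

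Finally I would let $\varepsilon\to0$. Pointwise $|f_{h,\varepsilon}-f_h|\le\varepsilon$, so $f_{h,\varepsilon}\to f_h$ in $L^2$, using $|\Omega_h|<\infty$. For the gradients,
\[
|\nabla f_{h,\varepsilon}-\nabla f_h|^2=\mathbf 1_{\{0<|f_h|\le\varepsilon\}}|\nabla f_h|^2+\mathbf 1_{\{f_h=0\}}|\nabla f_h|^2 .
\]
The first term tends to $0$ by dominated convergence, since the sets $\{0<|f_h|\le\varepsilon\}$ decrease to the empty set and $|\nabla f_h|^2$ is bounded on a set of finite measure. The second term vanishes almost everywhere because $\nabla f_h=0$ a.e.\ on $\{f_h=0\}$ (Stampacchia's lemma). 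This term is the only delicate point, and I would handle it either by invoking that lemma or by avoiding it altogether. To avoid it, one notes that $\mathbf 1_{\{f_h\neq0\}}\nabla f_h$ is exactly the weak gradient of the $L^2$ limit $f_h$: the $f_{h,\varepsilon}$ are bounded in $W^{1,2}$, so a subsequence converges weakly in $W^{1,2}$, and its limit must be $f_h$. Since $W_0^{1,2}(\Omega_h)$ is closed, and also weakly closed, we conclude $f_h\in W_0^{1,2}(\Omega_h)$.
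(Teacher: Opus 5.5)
Your argument is correct and complete. The paper gives no proof of its own for this lemma; it only points to Lemma~6.13 of \cite{logunov2020landisconjectureexponentialdecay}. Your truncation $f_{h,\varepsilon}=\operatorname{sign}(f_h)\,(|f_h|-\varepsilon)_+$ is the standard self-contained proof of this fact: it has compact support in $\Omega_h$ because $f_h$ is continuous up to $\partial\Omega_h$ and vanishes there, and you then let $\varepsilon\to0$.

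One small remark. The exact formula $\nabla f_{h,\varepsilon}=\mathbf 1_{\{|f_h|>\varepsilon\}}\nabla f_h$ already uses that $\nabla f_h=0$ a.e. on $\{|f_h|=\varepsilon\}$. This is the same kind of fact you want to avoid at level $0$. For $f_h\in\mathcal{C}^1$ it is elementary anyway: near a point where $\nabla f_h\neq0$, the level set is a $\mathcal{C}^1$ curve and hence null. Moreover, your weak-compactness route only needs the pointwise bound $|\nabla f_{h,\varepsilon}|\le|\nabla f_h|$ a.e., which holds at every point of differentiability.
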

See Lemma $6.13$ of \cite{logunov2020landisconjectureexponentialdecay}.

\begin{lem}
\label{lem: positivity_radius}
Let $u_h$ be a solution to $(-h^2\Delta + V_{per} - E)u_h = 0$ on a ball $B(x, r_h)$ of $B(0, R_0)$, where $r_h < r_{0,h} = r_0\times h$ with $r_0$ a small constant independent of $h$ and $u_h$. If $u_h$ is continuous and $u_h > 0$ on $\partial B(x, r_h)$, then $u_h > 0$ on $B(x, r_h)$.
\end{lem}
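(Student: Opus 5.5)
We plan to combine Lemma~\ref{lem: small_laplacian_free_solution} with a maximum principle for the divergence-form operator obtained by dividing by a positive multiplier. The ball $B(x,r_h)$ has Poincaré constant $k_h^2\simeq r_h^2$. Since $r_h<r_0h$, we get
\[
h^{-2}k_h^2\|V_{per}-E\|_{L^\infty}\lesssim r_0^2\,\|V_{per}-E\|_{L^\infty},
\]
and this is $\ll 1$ once $r_0$ is chosen small enough depending only on $\|V_{per}\|_{L^\infty}$ and the compact interval $I$. This choice is independent of $h$ and $u_h$.

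Lemma~\ref{lem: small_laplacian_free_solution} then gives a weak solution $\varphi_h=1+\tilde\varphi_h$ on $B(x,r_h)$ with $\tilde\varphi_h\in W^{1,2}_0$ and $\|\tilde\varphi_h\|_{L^\infty}\le 1/2$, so $1/2\le\varphi_h\le 3/2$. By Lemma~\ref{lem: C1_regularity_solution_Schrodinger}, $\varphi_h$ is $\mathcal C^1$ in the interior.

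Set $f_h=u_h/\varphi_h$. A direct computation using $(-h^2\Delta+V_{per}-E)\varphi_h=0$ and the same equation for $u_h$ shows that $f_h$ weakly solves the divergence-free equation
\[
\operatorname{div}(\varphi_h^2\nabla f_h)=0 .
\]
Indeed, for a test function $\psi$ one tests the equation of $u_h$ against $\varphi_h\psi$ and that of $\varphi_h$ against $u_h\psi$, then subtracts; the potential terms cancel. The operator is uniformly elliptic, since $\varphi_h^2\in[1/4,9/4]$.

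Next we apply the weak maximum principle for this operator to $f_h^-=\max(-f_h,0)$. Since $u_h>0$ on $\partial B(x,r_h)$ and $\varphi_h>0$, the function $f_h^-$ vanishes near the boundary, in the trace sense. Testing the equation with $f_h^-$ gives
\[
\int\varphi_h^2|\nabla f_h^-|^2=0 ,
\]
so $f_h^-$ is constant and hence zero, giving $f_h\ge0$. Therefore $u_h\ge0$ in $B(x,r_h)$.

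To upgrade this to strict positivity we use the strong maximum principle, or the Harnack inequality for divergence-form equations with bounded measurable coefficients (De Giorgi--Nash--Moser). A nonnegative solution vanishing at an interior point vanishes identically on the connected ball. By continuity of $u_h$ up to the boundary, this contradicts $u_h>0$ on $\partial B(x,r_h)$. Hence $u_h>0$ on $B(x,r_h)$.

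The main technical obstacle is justifying the boundary behaviour: the test function $f_h^-$ must lie in $W^{1,2}_0$. Because $u_h$ is positive and continuous on the compact set $\partial B$, it is positive on a neighbourhood of the boundary, which also requires continuity of $\varphi_h$ up to the boundary. This follows by working on a slightly smaller ball $B(x,r_h-\varepsilon)$, where $u_h>0$ on the boundary, $u_h$ is $\mathcal C^1$ on the closure, and Lemma~\ref{lem: utility_1} applies to the truncation. Alternatively one can first pass to the slightly larger ball, still of radius $<r_0h$, and work on its interior.
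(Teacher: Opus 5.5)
Your proof is correct, but it takes a different route from the paper's. The paper never passes to divergence form in this lemma. It sets $\delta_h=\min_{\partial B}u_h$ and considers the sublevel set $\Omega_h=\{u_h<\delta_h/2\}$, which stays away from $\partial B$. It uses Lemma~\ref{lem: utility_1} to get $u_h-\delta_h/2\in W^{1,2}_0(\Omega_h)$, and builds $\varphi_h=1+\tilde\varphi_h$ on $\Omega_h$ itself. Then $g_h=\delta_h\varphi_h/2-u_h\in W^{1,2}_0(\Omega_h)$ solves the Schr\"odinger equation. The energy identity $h^2\int|\nabla g_h|^2=-\int(V_{per}-E)g_h^2$, combined with the Poincar\'e constant $Cr_h^2$, forces $g_h=0$. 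Hence $u_h=\delta_h\varphi_h/2\ge\delta_h/4$ on $\Omega_h$.

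So the paper uses the smallness of $r_h/h$ twice: once to build $\varphi_h$, and once for coercivity, i.e.\ uniqueness of the Dirichlet problem on $\Omega_h$. It obtains a quantitative lower bound from nothing beyond Lemma~\ref{lem: small_laplacian_free_solution} and Poincar\'e.

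You use the smallness only once, on the whole ball. The ground-state substitution $f_h=u_h/\varphi_h$ then removes the zeroth-order term so that a maximum principle applies. This is the same device the paper only introduces later, in Section~\ref{subsect: eqdivsuresptroue}.

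Two simplifications are available to you:
\begin{itemize}
\item \textbf{Boundary step.} You do not need continuity of $\varphi_h$ up to $\partial B$. Since $\varphi_h\ge 1/2$, $f_h^-$ vanishes wherever $u_h>0$, so it is automatically compactly supported in the ball and lies in $W^{1,2}_0$. Your ``larger ball'' alternative is in any case unavailable, since the equation is only assumed on $B(x,r_h)$.
\item \textbf{Strict positivity.} You do not need De Giorgi--Nash--Moser. Constants also solve $\mathrm{div}(\varphi_h^2\nabla\,\cdot)=0$. With $\delta=\min_{\partial B}u_h$, uniform continuity gives $u_h>\delta/2$ near $\partial B$, and $\varphi_h\le 3/2$, so $f_h>\delta/3$ there. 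Testing with $(f_h-\delta/3)^-$ then gives $f_h\ge\delta/3$ on the ball, i.e.\ $u_h\ge\delta/6$. This recovers the paper's quantitative conclusion in one step.
\end{itemize}
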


\begin{proof}
We can assume that $u_h$ is greater than a positive constant $\delta_h$ on $\partial B(x, r_h)$. Consider the set $\Omega_h = \{x \in B(x, r_h) : u_h(x) < \delta_h/2\}$. This is an open subset strictly contained in $B(x, r_h)$. If $u_h$ is not positive in $B(x, r_h)$, then $\Omega_h$ is non-empty. Since $u_h \in \mathcal{C}^1(\Omega_h)$ by Lemma~\ref{lem: C1_regularity_solution_Schrodinger} and $u_h = \delta_h/2$ on $\partial \Omega_h$, we deduce from Lemma~\ref{lem: utility_1} that $(u_h - \delta_h/2) \in W_0^{1,2}(\Omega_h)$.

Since $\Omega_h \subset B(x, r_h)$, $\Omega_h$ has a Poincaré constant of $C r_h^2$. For sufficiently small $r_h$, Lemma~\ref{lem: small_laplacian_free_solution} guarantees the existence of $\varphi_h = 1 + \tilde{\varphi}_h$, where $\tilde{\varphi}_h \in W_0^{1,2}(\Omega_h)$ and $\|\tilde{\varphi}_h\|_{L^\infty} \leq 1/2$, such that $-h^2\Delta \varphi_h + (V_{per} - E)\varphi_h = 0$ on $\Omega_h$.

Then, the function $g_h = (\delta_h \varphi_h/2 - u_h)$ belongs to $W_0^{1,2}(\Omega_h)$ and satisfies $-h^2\Delta g_h + (V_{per} - E)g_h = 0$. For any $\beta_h \in \mathcal{C}_0^\infty(\Omega_h)$,
\[
h^2 \int_{\Omega_h} \nabla g_h \cdot \nabla \beta_h = - \int_{\Omega_h} (V_{per} - E) g_h \beta_h.
\]
Taking the limit $\beta_h \rightarrow g_h$ in $W_0^{1,2}(\Omega_h)$, we obtain:
\[
h^2 \int_{\Omega_h} |\nabla g_h|^2 = - \int_{\Omega_h} (V_{per}-E) g_h^2 \leq \|V_{per} - E\|_{L^\infty} \int_{\Omega_h} g_h^2 \leq \|V_{per} - E\|_{L^\infty} C r_h^2 \int_{\Omega_h} |\nabla g_h|^2.
\]
For $r_h < h (C \|V_{per} - E\|_{L^\infty})^{-1/2}$, this inequality holds only if $g_h = 0$. Thus, $u_h = \delta_h \varphi_h/2$ on $\Omega_h$. Since $\|\tilde{\varphi}_h\|_{L^\infty} \leq 1/2$, we have $u_h > \delta_h/4$ on $\Omega_h$, and hence on $B(x, r_h)$.
\end{proof}

\subsection{Perforation of space}
\label{sect: decoupespace}
Let's finish the preliminaries by preparing the ground for the proof. To do this, we will slightly modify the ball $B(0,R_0)$. 
\begin{defn}
Let $F_{0,h}$ denote the zero-set of $u_{per,h}$ on $B(0, R_0)$.
\end{defn}

Let $\epsilon(h) > 0$ and $C > 2$ be two parameters to be determined later. We begin by filling $B(0, R_0) \setminus F_{0,h}$ with a maximal collection of disjoint open balls of radius $(C+1)h\epsilon(h)$. By maximality, every point in $B(0, R_0) \setminus F_{0,h}$ lies within a distance of $2(C+1)h\epsilon(h)$ from the center of one of these balls, from $F_{0,h}$, or from $\partial B(0, R_0)$. Otherwise, we could add another ball centered at that point.

\begin{defn}
We define $F_{1,h}$ as the collection of balls of radius $h\epsilon(h)$, centered at the same points as the $(C+1)h\epsilon(h)$-radius balls constructed above (see Figure~\ref{fig:esptroue}).
\end{defn}

Thus, any ball in $F_{1,h}$ is separated by a distance greater than $Ch\epsilon(h)$ from any other ball in $F_{1,h}$, from $\partial B(0, R_0)$, and from $F_{0,h}$. The set $F_{0,h} \cup F_{1,h}$ is $3Ch\epsilon(h)$-dense in $B(0, R_0)$. Thanks to this construction, we can control the Poincaré constant of the domain $\Omega_h := B(0, R_0) \setminus (F_{0,h} \cup F_{1,h})$:

\begin{figure}
    \centering
    \includegraphics[width=0.6\linewidth]{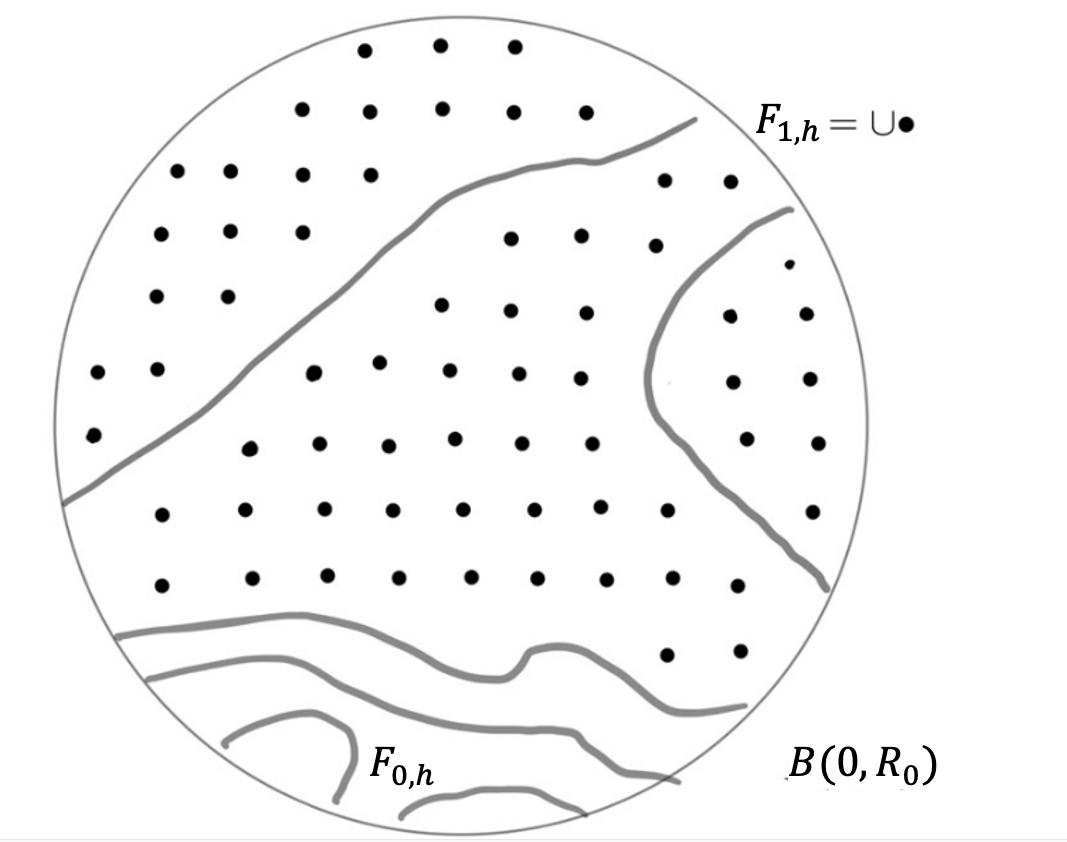}
    \caption{Puncturing outside the nodal domains in $B(0, R_0)$ (Figure from \cite{logunov2020landisconjectureexponentialdecay}).}
    \label{fig:esptroue}
\end{figure}

\begin{lem}
For $\epsilon(h)$ sufficiently small, the Poincaré constant of $\Omega_h$ is bounded by $C'h^2\epsilon(h)^2$, where $C'$ is a constant depending only on $C$.
\end{lem}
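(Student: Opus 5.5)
The Poincaré constant here means the best $k^2$ with $\int_{\Omega_h} |f|^2 \le k^2 \int_{\Omega_h} |\nabla f|^2$ for all $f \in W_0^{1,2}(\Omega_h)$. The plan is to prove this inequality by localizing: cover $\Omega_h$ by cells of size comparable to $\delta := 3Ch\epsilon(h)$, and show that each cell meets the complement of $\Omega_h$ in a set that is ``large'' in the capacity sense. Summing the local Poincaré inequalities then gives the global bound with constant $C'\delta^2 = C' h^2\epsilon(h)^2$. Extending $f$ by zero outside $\Omega_h$ gives $f \in W^{1,2}(\R^2)$ vanishing on $F_{0,h}\cup F_{1,h}$ and outside $B(0,R_0)$, so we only need to work with this extension.

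\textbf{Step 1 (covering).} Tile the plane by squares $Q$ of side $\delta$. The union $F_{0,h}\cup F_{1,h}$ is $\delta$-dense in $B(0,R_0)$, and points near $\partial B(0,R_0)$ are close to the exterior where $f=0$. Hence each square $Q$ meeting $\Omega_h$ has its enlargement $\tilde Q = 5Q$ containing one of the following:
\begin{itemize}
\item[(a)] an entire ball of $F_{1,h}$, of radius $h\epsilon(h) = \delta/(3C)$;
\item[(b)] a point of $F_{0,h}$;
\item[(c)] a macroscopic piece of the complement of $B(0,R_0)$.
\end{itemize}
The enlarged squares $\tilde Q$ have bounded overlap, uniformly in $h$.

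\textbf{Step 2 (local Poincaré inequality).} On each $\tilde Q$ we prove $\int_{\tilde Q}|f|^2 \le C'\delta^2\int_{\tilde Q}|\nabla f|^2$ for every $W^{1,2}$ function vanishing on the relevant set $K\subset\tilde Q$. By scaling to side $1$, it suffices to show that $K$ has capacity bounded below relative to the unit square. This then follows from the standard Maz'ya-type inequality $\int|f|^2 \lesssim \mathrm{cap}(K)^{-1}\int|\nabla f|^2$, or from a compactness argument.
\begin{itemize}
\item In case (a), $K$ is a disc of fixed relative radius $1/(15C)$, which has positive capacity depending only on $C$.
\item In case (c) the situation is the same, since $K$ contains a fixed fraction of a half-disc.
\end{itemize}

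\textbf{Step 3 (nodal set, case (b)).} This is the main obstacle, because $F_{0,h}$ near a point could a priori be a single point, which has zero capacity in dimension $2$. My first approach is to use the structure of the zero set. A nodal point $x_0$ of a $\mathcal C^1$ solution is either a single point or lies on a nodal curve. I would try to show that $F_{0,h}$ contains a connected piece of diameter $\gtrsim\delta$ inside $\tilde Q$, once $\delta \le r_0 h$ (i.e.\ $\epsilon(h)$ is small). The argument uses Lemma~\ref{lem: positivity_radius}:
\begin{itemize}
\item If $u_{per,h}$ had constant sign on a circle $\partial B(x_0,r)$ with $r<r_0h$, it would have strict sign inside the disc, contradicting $u_{per,h}(x_0)=0$.
\item So $u_{per,h}$ changes sign, and hence vanishes, on every circle $\partial B(x_0,r)$ with $r\in(0,\delta)$.
\end{itemize}
Consequently the radial projection of $F_{0,h}\cap B(x_0,\delta)$ covers $[0,\delta)$. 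A compact set with this property has capacity comparable to that of a segment of length $\delta$, since capacity does not increase under $1$-Lipschitz maps such as the radial projection. This gives the uniform capacity lower bound in case (b).

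\textbf{Step 4 (summation).} Summing the local inequalities over all squares and using bounded overlap gives
\[
\int_{\Omega_h}|f|^2 \le \sum_Q\int_{\tilde Q}|f|^2 \le C'\delta^2\sum_Q\int_{\tilde Q}|\nabla f|^2 \lesssim C' h^2\epsilon(h)^2\int_{\Omega_h}|\nabla f|^2 ,
\]
which is the claimed bound. The smallness requirement on $\epsilon(h)$ enters only through the condition $3C\epsilon(h) < r_0$ needed in Step 3.
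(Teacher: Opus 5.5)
Your proof is correct. Its decisive input is the same as the paper's: Lemma~\ref{lem: positivity_radius} forces $u_{per,h}$ to vanish on every circle $C(z,r)$, $0<r<r_0h$, centred at a nodal point $z$. So the complement of $\Omega_h$ is ``thick'' at scale $h\epsilon(h)$ around every point of $F_{0,h}\cup F_{1,h}$. Where you differ is in how the local Poincaré inequality is extracted from this.

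The paper stays elementary. For $f\in\mathcal{C}_0^\infty(\Omega_h)$ extended by zero and $z\in F_{0,h}\cup F_{1,h}$, $f$ vanishes at some point of each circle $C(z,r)$, $r<h\epsilon(h)$, so $\max_{C(z,r)}|f|\le\int_{C(z,r)}|\nabla f|$. Cauchy--Schwarz and integration in $r$ then give $\int_{B(z,h\epsilon(h))}|f|^2\lesssim h^2\epsilon(h)^2\int_{B(z,h\epsilon(h))}|\nabla f|^2$. A good circle of radius in $(h\epsilon(h)/2,h\epsilon(h))$, followed by integration along rays, extends this to $B(z,3Ch\epsilon(h))$, and a bounded-overlap cover by such balls concludes.

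You instead package the thickness as a uniform capacity lower bound and invoke a Maz'ya-type capacitary Poincaré inequality. This is more conceptual and modular: it would work for any obstacle set with uniformly positive capacity at that scale, not only one meeting every circle. The cost is heavier external machinery, and one step must be stated precisely. Monotonicity under $1$-Lipschitz maps holds for logarithmic capacity (via the transfinite diameter), not a priori for the variational $W^{1,2}$-capacity in Maz'ya's inequality. You therefore need the standard comparison $\mathrm{cap}(K,B(x_0,R))=2\pi/\bigl(\log(R/\mathrm{cap}_{\log}K)+O(1)\bigr)$ for $K\subset B(x_0,\delta)$, $R=10\delta$, together with $\mathrm{cap}_{\log}[0,\delta]=\delta/4$.

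Since you have already shown that $f$ vanishes on every such circle, the paper's one-dimensional argument reaches the conclusion directly, and the capacity detour is unnecessary. Your side remark that a nodal point of a $\mathcal{C}^1$ solution is isolated or lies on a nodal curve is not justified for $L^\infty$ potentials, but you do not use it.
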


\begin{proof}
See Appendix~\ref{subsect: small_PC}.
\end{proof}
	
\section{From Schrödinger to Harmonic equation}
\label{sect: de SchrodAHarmonique}
We recall that $u_{per,h}$ is a solution of
$$
(P_{V,per}-E) u_{per,h} := (-h^2\Delta + V_{per} - E)u_{per,h} = 0,
$$
where $V_{per}$ is a potential in $L^\infty(B(0,R_0), \R)$ and $E \in I$ is a compact interval of $\R$.
The aim of this section is to reduce the study of Schrödinger eigenfunctions to that of harmonic functions. First, we reduce the problem to a divergence-free equation, and then we use the theory of quasi-conformal mappings to transform it into a harmonic function problem.

\subsection{Divergence type equation on the perforated space}
\label{subsect: eqdivsuresptroue}
We recall that by Lemma~\ref{lem: small_laplacian_free_solution}, if $\epsilon(h)$ is sufficiently small (and consequently, the Poincaré constant of $\Omega_h$ is also small), then there exists a weak solution $\varphi_h$ to the equation $(-h^2\Delta + V_{per} - E)\varphi_h = 0$ on $\Omega_h$ such that $\varphi_h = 1 + \tilde{\varphi}_h$, where $\tilde{\varphi}_h \in W_0^{1,2}(\Omega_h)$ and $\|\tilde{\varphi}_h\|_{L^\infty} = \mathcal{O}(\epsilon(h)^2)$.

We extend $\varphi_h$ by $1$ outside $\Omega_h$ and consider the function
\begin{equation}
f_h = \frac{u_{per,h}}{\varphi_h}.
\end{equation}

\begin{lem}
The function $f_h \in W_{loc}^{1,2}(B(0, R_0))$ is a weak solution to the equation
\begin{equation}
\mbox{div}(\varphi_h^2 \nabla f_h) = 0
\end{equation}
on $\tilde{\Omega}_h := B(0, R_0) \setminus F_{1,h}$.
\end{lem}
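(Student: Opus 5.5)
The plan follows the classical ``ground state substitution'': write $u_{per,h}=\varphi_h f_h$ and use that $\varphi_h$ solves the same Schr\"odinger equation. We argue separately on $\Omega_h$, where $\varphi_h$ solves the equation, and near $F_{0,h}$, where $\varphi_h\equiv 1$ and $u_{per,h}$ vanishes, and then glue the two pieces.

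\textbf{Regularity.} Since $\|\tilde\varphi_h\|_{L^\infty}=\mathcal O(\epsilon(h)^2)$, we have $1/2\le\varphi_h\le 3/2$ for $\epsilon(h)$ small. Also $\varphi_h=1+\tilde\varphi_h$ with $\tilde\varphi_h\in W_0^{1,2}(\Omega_h)$ extended by zero, so $\varphi_h\in W^{1,2}_{loc}(B(0,R_0))\cap L^\infty$. Lemma~\ref{lem: C1_regularity_solution_Schrodinger}, applied on $B(0,R_0)$ itself, gives $u_{per,h}\in\mathcal C^1$. The quotient rule for $W^{1,2}\cap L^\infty$ functions with a denominator bounded below then gives $f_h\in W^{1,2}_{loc}$, with
\[
\nabla f_h=\frac{\nabla u_{per,h}}{\varphi_h}-\frac{u_{per,h}\nabla\varphi_h}{\varphi_h^2}.
\]

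\textbf{Identity.} For a test function $\psi\in\mathcal C_0^\infty(\tilde\Omega_h)$ we have $\varphi_h^2\nabla f_h=\varphi_h\nabla u_{per,h}-u_{per,h}\nabla\varphi_h$, so
\[
\int\varphi_h^2\nabla f_h\cdot\nabla\psi=\int\nabla u_{per,h}\cdot\nabla(\varphi_h\psi)-\int\nabla\varphi_h\cdot\nabla(u_{per,h}\psi).
\]
The two cross terms $\psi\nabla u_{per,h}\cdot\nabla\varphi_h$ cancel. We now test the equation of $u_{per,h}$ against $\varphi_h\psi$ and the equation of $\varphi_h$ against $u_{per,h}\psi$. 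After multiplying by $h^{-2}$, the first integral equals $-h^{-2}\int(V_{per}-E)u_{per,h}\varphi_h\psi$ and the second equals the same quantity, so the difference is zero.

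\textbf{Legitimacy of the test functions.} The first test is fine: $\varphi_h\psi\in W_0^{1,2}\cap L^\infty$ with compact support in $B(0,R_0)$, and $u_{per,h}$ solves its equation on all of $B(0,R_0)$, so a density argument applies. The second test is the main obstacle, because $\varphi_h$ is only a solution on $\Omega_h$, while $\operatorname{supp}\psi$ may meet $F_{0,h}$.

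To handle it, I would use that $u_{per,h}=0$ on $F_{0,h}$. The functions $u_{per,h}\psi$ and $(u_{per,h}-\delta)_+\psi$, together with the analogous negative parts, converge in $W^{1,2}$ to $u_{per,h}\psi$ as $\delta\to 0$. For fixed $\delta$, these truncations are $\mathcal C^0\cap W^{1,2}$ and have compact support in $\Omega_h\cup F_{0,h}$ away from the zero set, because $\operatorname{supp}\psi$ avoids $F_{1,h}$. Hence they lie in $W_0^{1,2}(\Omega_h)$, in the spirit of Lemma~\ref{lem: utility_1}, and are valid test functions for the weak equation of $\varphi_h$ on $\Omega_h$.

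Passing to the limit gives
\[
h^2\int\nabla\varphi_h\cdot\nabla(u_{per,h}\psi)=-\int(V_{per}-E)\varphi_h u_{per,h}\psi .
\]
On $F_{0,h}$ both sides carry the factor $u_{per,h}=0$, and $\nabla\varphi_h=0$ almost everywhere off $\Omega_h$, so no boundary contribution appears. Combining this with the previous two steps shows that $f_h$ is a weak solution of $\operatorname{div}(\varphi_h^2\nabla f_h)=0$ on $\tilde\Omega_h$.
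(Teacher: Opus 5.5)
Your proof is correct and follows essentially the argument the paper relies on (it defers to Lemma~4.1 of \cite{logunov2020landisconjectureexponentialdecay}): write $\varphi_h^2\nabla f_h=\varphi_h\nabla u_{per,h}-u_{per,h}\nabla\varphi_h$, test the equation of $u_{per,h}$ with $\varphi_h\psi$, and test the equation of $\varphi_h$ on $\Omega_h$ with $u_{per,h}\psi$, which is admissible because $u_{per,h}\psi\in W_0^{1,2}(\Omega_h)$. Your truncation step is an inline proof of Lemma~\ref{lem: utility_1} in this setting; its $W^{1,2}$-convergence uses the standard fact that $\nabla u_{per,h}=0$ almost everywhere on the zero set $F_{0,h}$, which you could state explicitly.
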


The proof of this lemma can be found in \cite{logunov2020landisconjectureexponentialdecay}, Lemma~$4.1$.

\subsection{Quasi-conformal mapping}
\label{subsect: chg de var quasi-conforme}
In $\tilde{\Omega}_h$, we can show, using the Poincaré lemma, that there exists a function $\tilde{f}_h \in W_{loc}^{1,2}$ defined locally such that the local form $\omega_h = f_h + i\tilde{f}_h$ satisfies the Beltrami equation:
\begin{equation}
\frac{\partial \omega_h}{\partial \bar{z}} = \mu_h \frac{\partial \omega_h}{\partial z},
\label{eq: Beltrami}
\end{equation}
with coefficient:
\begin{defn}
\label{def: definition du coeff de Beltrami}
\begin{equation}
\mu_h := \frac{1 - \varphi_h^2}{1 + \varphi_h^2} \cdot \frac{\partial_x f_h + i \partial_y f_h}{\partial_x f_h - i \partial_y f_h}.
\end{equation}
\end{defn}

Unlike $\omega_h$, $\mu_h$ is globally defined on $\tilde{\Omega}_h$ and satisfies:
\begin{equation}
|\mu_h| \leq \left| \frac{1 - \varphi_h^2}{1 + \varphi_h^2} \right| \lesssim \epsilon(h)^2.
\end{equation}
We extend $\mu_h$ by $0$ on the whole complex plane. Thus, $\mu_h$ is a measurable function compactly supported with $|\mu_h| \lesssim \epsilon(h)^2 < 1$. 

By invoking an existence theorem due to Ahlfors (\cite{ahlfors2006lectures}, Chapter~$5$, Section~$B$, Theorem~$1$), we obtain the following result:
\begin{prop}
\label{prop: def_de_psi_h}
There exists a $K_h$-quasi-conformal homeomorphism $\psi_h$ of the plane (where the dilatation coefficient $K_h$ is given by $K_h = \frac{1 + |\mu_h|}{1 - |\mu_h|}$) satisfying the following properties:
\begin{itemize}
    \item $\psi_h \in W_{loc}^{1,2}$,
    \item $\frac{\partial \psi_h}{\partial \bar{z}} = \mu_h \frac{\partial \psi_h}{\partial z}$,
    \item $K_h \leq 1 + \tilde{C} \epsilon(h)^2$, where $\tilde{C} > 0$,
    \item $\psi_h(0) = 0$ and $\psi_h(\infty) = \infty$.
\end{itemize}

Moreover, the following estimate holds for the derivative of $\psi_h$:
\begin{equation}
\|\partial_z \psi_h - 1\|_p \leq \frac{C_p \|\mu_h\|_p}{1 - C_p \|\mu_h\|_\infty},
\end{equation}
where the constant $C_p$ is defined by
\begin{equation}
C_p := \frac{\pi^2}{4} \cdot \frac{1}{\left(\frac{p}{p-1}\right)^{2/p} - 1} \lesssim_{p \to +\infty} p^2.
\end{equation}
This estimate is valid when $C_p \|\mu_h\|_\infty < 1$, which holds for large $p$ since $\epsilon(h)$ is small enough.
\end{prop}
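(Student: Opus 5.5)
We will obtain $\psi_h$ from the measurable Riemann mapping theorem, applied to the compactly supported Beltrami coefficient $\mu_h$. The sharper derivative estimate then comes from the standard Neumann series for the Beurling transform, in the spirit of Ahlfors' construction.

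\textbf{Existence and the first properties.} By Definition~\ref{def: definition du coeff de Beltrami} and the bound $|\mu_h|\lesssim\epsilon(h)^2$, $\mu_h$ is measurable, supported in $\overline{B(0,R_0)}$, and satisfies $\|\mu_h\|_\infty\le k<1$ for $\epsilon(h)$ small. Ahlfors' theorem (\cite{ahlfors2006lectures}, Ch.~5, \S B, Thm.~1) then gives a unique normalized solution $\psi_h$ of $\partial_{\bar z}\psi_h=\mu_h\partial_z\psi_h$. It is a quasiconformal homeomorphism of the plane lying in $W^{1,2}_{loc}$, in fact in $W^{1,p}_{loc}$ for some $p>2$. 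Since $\mu_h$ has compact support, we use the normalization $\psi_h(z)=z+O(1/z)$ at infinity and then translate so that $\psi_h(0)=0$. Both normalizations give $\psi_h(\infty)=\infty$, and translation does not change the Beltrami equation.

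\textbf{Dilatation.} The pointwise dilatation is $(1+|\mu_h|)/(1-|\mu_h|)$, so $K_h=\frac{1+\|\mu_h\|_\infty}{1-\|\mu_h\|_\infty}$. Writing $m=\|\mu_h\|_\infty\lesssim\epsilon(h)^2$, we have $\frac{1+m}{1-m}=1+\frac{2m}{1-m}\le 1+4m$ once $m\le 1/2$. This gives $K_h\le 1+\tilde C\epsilon(h)^2$.

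\textbf{Derivative estimate.} We write $\psi_h(z)=z+\mathcal C g(z)$, where $\mathcal C$ is the Cauchy transform and $g=\partial_{\bar z}\psi_h$. Then $\partial_z\psi_h=1+Tg$, with $T$ the Beurling transform. The Beltrami equation becomes $g=\mu_h+\mu_h Tg$, whose solution is the Neumann series $g=\sum_{n\ge0}(\mu_h T)^n\mu_h$. Denote by $C_p$ the $L^p$ operator norm of $T$ in the form stated. The series converges in $L^p$ as soon as $C_p\|\mu_h\|_\infty<1$, and gives
\[
\|g\|_p\le \frac{\|\mu_h\|_p}{1-C_p\|\mu_h\|_\infty}.
\]
Hence
\[
\|\partial_z\psi_h-1\|_p=\|Tg\|_p\le C_p\|g\|_p\le \frac{C_p\|\mu_h\|_p}{1-C_p\|\mu_h\|_\infty}.
\]
The growth $C_p\lesssim p^2$ follows from the explicit formula: $(p/(p-1))^{2/p}-1=\frac{2}{p}\log\frac{p}{p-1}+O(p^{-3})\sim 2/p^2$. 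Because $\|\mu_h\|_\infty\lesssim\epsilon(h)^2$, the smallness condition $C_p\|\mu_h\|_\infty<1$ holds for all $p\lesssim\epsilon(h)^{-1}$.

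\textbf{Main obstacle.} The delicate point is the constant $C_p$. The sharp $L^p$ norm of the Beurling transform is not known in closed form (Iwaniec's conjecture predicts $p^*-1$). We therefore take the displayed $C_p$ as a known admissible upper bound, quoted from the literature or obtained by Riesz--Thorin-type interpolation from $\|T\|_2=1$. The argument only needs some bound on $\|T\|_{L^p\to L^p}$ growing polynomially in $p$, and $p^2$ is comfortably within that. A second, minor point is to check that the Neumann-series solution coincides with Ahlfors' homeomorphism; this follows from uniqueness of the normalized solution.
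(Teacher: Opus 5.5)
Your proposal is correct and follows essentially the paper's route. The paper simply invokes Ahlfors' existence theorem (Lectures on Quasiconformal Mappings, Ch.~5, \S B, Thm.~1), whose proof is exactly the Neumann-series inversion of $I-\mu_h T$ in $L^p$ that you reproduce; Ahlfors iterates on $\partial_z\psi_h-1$ rather than on $\partial_{\bar z}\psi_h$, which gives the same bound. As in the paper, the explicit $C_p$ is only taken as an admissible bound for $\|T\|_{L^p\to L^p}$, and it has to be quoted, because interpolation from $\|T\|_2=1$ alone gives nothing for $p>2$; your explicit checks of $K_h\le 1+\tilde C\epsilon(h)^2$ and $C_p\lesssim p^2$ fill in steps the paper leaves implicit.
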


By Stoilow's factorization theorem (see \cite{Astala2008-dh}), there exists a holomorphic function $\Gamma_h$ such that
\begin{equation}
\omega_h = \Gamma_h\circ \psi_h \quad \text{locally on } \tilde{\Omega}_h.
\end{equation}
Thus, $f_h \circ \psi_h^{-1}$ is harmonic on $\psi_h(\tilde{\Omega}_h)$. By the Riemann uniformization theorem, there exists a conformal biholomorphic homeomorphism mapping $\psi_h(B(0, R_0))$ to $B(0, R_0)$ and $\psi_h(0)$ to $0$. The composition of these two homeomorphisms is a $K_h$-quasi-conformal mapping $g_h$ satisfying the same Beltrami equation~\eqref{eq: Beltrami} as $\omega_h$ and fixing $0$. Therefore, $f_h \circ g_h^{-1}$ is harmonic on $g_h(\tilde{\Omega}_h)\subset B(0,R_0)$.

\subsection{Space distortion}
\label{subsect: distortion de lespace}
By Mori's theorem (see \cite{ahlfors2006lectures}, Chapter~$3$, Section~$C$), we can control the distortion of distances by $g_h$ inside $B(0, R_0)$:
\begin{equation}
\frac{1}{16} \left| \frac{z_1 - z_2}{R_0} \right|^{K_h} \leq \frac{|g_h(z_1) - g_h(z_2)|}{R_0} \leq 16 \left| \frac{z_1 - z_2}{R_0} \right|^{1/K_h}.
\label{eq: InegdeMori}
\end{equation}

We define
\begin{defn}
\label{def: epsilon(h)}
\begin{equation}
\epsilon(h) = \frac{c}{\log(R_0 / h)},
\end{equation}
where $c > 0$ is small.
\end{defn}

Thus, $K_h \in [1, 1 + \tilde{C}c^2 / (\log(R_0 / h))^2[$, and $(R_0)^{K_h} \sim R_0 \sim (R_0)^{1/K_h}$. For sufficiently small $h$, if $h\epsilon(h) \leq |z_1 - z_2| \leq 2R_0$, then
\begin{equation}
\frac{1}{32} |z_1 - z_2| \leq |g_h(z_1) - g_h(z_2)| \leq 32 |z_1 - z_2|.
\label{eq: inegsurdistance}
\end{equation}

Thus, the function $\tilde{f}_h := f_h \circ g_h^{-1}$ is harmonic on $B(0, R_0) \setminus g_h(F_{1,h})$, where $g_h(F_{1,h})$ is a disjoint union of sets with diameters proportional to $h\epsilon(h)$. Each set in $g_h(F_{1,h})$ is contained in a ball of radius $32h\epsilon(h)$. Let us denote these balls by $D_{j,h}$. Each set in $g_h(F_{1,h})$ is separated by at least $\frac{C}{32}h\epsilon(h)$ from the other sets and from the zero set of $\tilde{f}_h$. Therefore, the balls $D_{j,h}$ are separated from each other and from $\tilde{f}_h^{-1}(\{0\})$ by a distance greater than $\frac{C}{32}h\epsilon(h) - 128h\epsilon(h) =: 32C_1h\epsilon(h)$. Moreover, $\tilde{f}_h$ does not change sign on $32C_1D_{j,h} \setminus D_{j,h}$. We have:
\begin{equation}
C_1 = \frac{C}{32} - 4 > 100,
\end{equation}
provided that $C$ is sufficiently large.
\section{From control on Harmonic functions to one on Scrödinger eigenfunctions}
\label{sect: Ineg Carleman}
We recall that
$\tilde{f}_h = \frac{u_{per,h}}{\varphi_h} \circ g_h^{-1}$
is harmonic on $\tilde{\Omega}_h = B(0, R_0) \setminus \bigcup_{j} D_{j,h}$, where $D_{j,h}$ are balls with radius $32h\epsilon(h)$ and are separated from each other and from $\tilde{f}_h^{-1}(\{0\})$ by a distance of $32C_1 h\epsilon(h)$ with $C_1 \gg 100$ (see Section~\ref{subsect: distortion de lespace}). We also recall that $|\varphi_h - 1|$ is bounded almost everywhere by a constant smaller than $\tilde{C}\epsilon(h)^2$ with $\tilde{C} > 0$, and $g_h = R_h \circ \psi_h$ is a homeomorphism of $B(0, R_0)$ with $\psi_h$ a quasi-conformal homeomorphism defined in Proposition~\ref{prop: def_de_psi_h} and $R_h$ a bi-holomorphic mapping between $\psi_h(B(0, R_0))$ and $B(0, R_0)$.

In Section~\ref{subsect: Uniformization}, we concluded that there are two cases to consider:
\begin{itemize}
    \item \textbf{Interior case:} Let $B(0, R)$ be a small ball with radius $R$ centered at $0$ such that $B(0, R) \subset \tilde{M} \subset B(0, R_0)$, where $\tilde{M}$ is a compact set in $B(0, R_0)$ that represents the Riemann surface $M$.
    \item \textbf{Boundary case:} Let $ A(0, R, R_0)$ be a ring centered at $0$ with $0 < R < R_0$.
\end{itemize}

\subsection{A few steps backwards}
\label{subsect: retourarriere}
Based on the demonstration of the "Toy problem" theorem in \cite{logunov2020landisconjectureexponentialdecay} (Section $6.1$), we prove:
\begin{thm}
Let $((D_{j,h})_j)_h$ be a family of ball collections on the plane, $32C_1 h\epsilon(h)$ separated (with $C_1 \gg 100$), with radius $32h\epsilon(h)$. Let $\tilde{R}_0 \gg 1$.
\begin{itemize}
    \item \textbf{Interior case:}\\
    Let $0 < \tilde{R} < \tilde{R}_0$. There exist constants $\tilde{C}' > C' > 0$ such that for $h$ small enough, for any family $(\tilde{f}_h)_h$ of harmonic functions $B(0,\tilde{R}_0)\setminus \bigcup_j D_{j,h}$  such that $\tilde{f}_h$ does not change sign in each $B(0, \tilde{R}_0) \cap 5D_{j,h} \setminus D_{j,h}$:

		\begin{multline}
			\int_{B(0,\tilde{R})\setminus \bigcup_j(3D_{j,h})} \tilde{f}_h^2+e^{-\tilde{C}'\frac{\log \left((h\epsilon(h))^{-1}\right)}{h\epsilon(h)}}\int_{B(0,\tilde{R}_0)\setminus B(0,\frac{1}{2}\tilde{R}_0)\bigcup_j (3D_{j,h})} \tilde{f}_h^2 \\
			\geq e^{-C'\frac{log\left((h\epsilon(h))^{-1}\right)}{h\epsilon(h)}}\int_{B(0,\frac{1}{2}\tilde{R}_0)\setminus B(0,\tilde{R})\bigcup_j(3D_{j,h})} \tilde{f}_h^2 .
		\end{multline}

    \item \textbf{Boundary case:}\\
    Let $0 < \tilde{R}_1 < \tilde{R}_2 < \tilde{R}_0$. There exists a constant $C' > 0$ such that for $h$ small enough, for any family $(\tilde{f}_h)_h$ of harmonic functions on $B(0,\tilde{R}_0)\setminus \bigcup_j D_{j,h}$ such that $\tilde{f}_h$ does not change sign in each $B(0, \tilde{R}_0) \cap 5D_{j,h} \setminus D_{j,h}$:

		\begin{equation}
			\int_{B(0,\tilde{R}_2)\setminus B(0,\tilde{R}_1)\bigcup_j(3D_{j,h})} \tilde{f}_h^2\geq e^{-C'\frac{\log \left((h\epsilon(h))^{-1}\right)}{h\epsilon(h)}}\int_{B(0,\tilde{R}_1)\setminus \bigcup_j(3D_{j,h})} \tilde{f}_h^2. 
		\end{equation}
\end{itemize}
\end{thm}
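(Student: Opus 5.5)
The proof will rescale to the unit-separation setting of the Logunov--Malinnikova--Nadirashvili--Nazarov toy problem and then rerun their argument. Set $\rho_h := 32h\epsilon(h)$ and $\lambda_h := \rho_h^{-1}$, and define $F_h(x) := \tilde{f}_h(x/\lambda_h)$. Then $F_h$ is harmonic on $B(0,\lambda_h\tilde{R}_0)\setminus\bigcup_j \lambda_h D_{j,h}$. The rescaled holes $\lambda_h D_{j,h}$ are unit discs, mutually separated by at least $C_1>100$, and $F_h$ keeps a constant sign on each $5\lambda_hD_{j,h}\setminus\lambda_hD_{j,h}$. Integrals change only by the common Jacobian factor $\lambda_h^{-2}$, which appears on both sides and cancels. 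It therefore suffices to prove the statements for $F_h$ on balls of radius $R'\simeq \lambda_h$, with exponential weights $e^{-CR'\log R'}$. Since $R'\log R' \simeq \frac{\log((h\epsilon(h))^{-1})}{h\epsilon(h)}$ up to constants depending on $\tilde{R}_0,\tilde{R}$, this gives exactly the exponents in the statement.

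\textbf{Boundary case.} Apply the key estimate of \cite{logunov2020landisconjectureexponentialdecay} quoted in the introduction with $R' = \lambda_h\tilde{R}_2$ and $\tilde{R}_1$ in place of $\tilde{R}_0/2$. Their proof extends verbatim to an arbitrary fixed ratio $\tilde{R}_1/\tilde{R}_2<1$, since only the constants change. Their argument runs as follows. One first shows, via the sign condition and Harnack's inequality on the annuli $5D_j\setminus D_j$ (\`a la Lemma 6.x of their paper), that $F_h$ can be replaced on $3D_j\setminus D_j$ by a harmonic extension across the holes. This uses the logarithmic capacity of a unit disc, and the harmonic extension fails only by a controlled factor $e^{O(1)}$ per hole. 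One then applies a three-balls or Carleman argument for harmonic functions on a domain with holes. Each of the $O(R'^2)$ holes costs a bounded factor, but along any radial path only $O(R')$ holes matter, with a $\log R'$ loss from the logarithmic potential of each perforation. This yields $e^{-CR'\log R'}$. I would import this result as a black box, stated for general radii, and only check that the hypotheses (separation $>100$, unit radius, sign condition on $5D_j\setminus D_j$) hold after scaling; they do by construction.

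\textbf{Interior case.} This is the version where the smallness is known on an inner ball rather than an outer annulus, and an outer term must be allowed. I would prove it with the same three-region (Carleman-type) inequality from their Section 6.1. Take a cut-off $\chi$ equal to $1$ on $B(0,\tfrac12 R'_0)\setminus B(0,R')$, vanishing near $0$ and near $\partial B(0,R'_0)$, where $R'_0=\lambda_h\tilde{R}_0$ and $R'=\lambda_h\tilde{R}$. Its gradient then lives in $B(0,R')$ (near the inner part) and in the outer annulus. Apply their weighted estimate for $\chi F_h$ with a radial weight $e^{\tau\phi(|x|)}$, $\tau\simeq R'_0\log R'_0$, decreasing in $|x|$. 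The error on the inner region then carries a weight larger than the middle region, by a factor $e^{C'\dots}$. The error on the outer annulus carries a much smaller weight, giving the factor $e^{-\tilde{C}'\dots}$ with $\tilde{C}'>C'$ after dividing through. Dividing by the weight at $|x|=\tfrac12 R'_0$ produces the displayed inequality.

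The main obstacle is the interior case. The cited theorem only gives the boundary-type inequality, so one must open up their proof and check that the hole-handling step (sign condition plus Harnack, producing the $\log$ loss) is compatible with a two-sided cut-off. In particular, one must ensure the constants satisfy $\tilde{C}'>C'$, which I would get by choosing $\phi$ with a strictly larger drop across the outer annulus than across the middle region.
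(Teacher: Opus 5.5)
Your route matches the paper's. You rescale, take the boundary case from the LMNN toy problem, and prove the interior case by a three-region Carleman argument with a radially decreasing weight and a two-sided cut-off. The boundary case is fine as an import. For $\tilde R_1\le\tilde R_2/2$ it follows from the quoted estimate with $R'=\lambda_h\tilde R_2$ by monotonicity of the regions; otherwise you just move the cut-off in the linear-weight argument, as the paper does with $\omega(x)=x_1$.

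Your account of why that argument works is not correct, though, and it is exactly the step you must redo in the interior case. There is no harmonic extension across holes via logarithmic capacity, and no counting of holes along radial paths. The actual mechanism is as follows.
The cut-off vanishes on $2D_j$ and equals $1$ off $3D_j$, so its error near each hole lives on $3D_j\setminus 2D_j$. The sign condition plus Harnack give $|\tilde f_h|\simeq a_j$ on $4D_j\setminus 2D_j$. Meanwhile the weight on a small ball inside $4D_j\setminus 3D_j$ exceeds its supremum on $3D_j\setminus 2D_j$ by a factor $e^{\tilde{C}k h\epsilon(h)}$. Hence the hole errors are $e^{-\tilde{C}kh\epsilon(h)}$ times the main term. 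They are absorbed once this beats the polynomial loss of the Carleman estimate ($C_0k^{-1}$ or $C_0k^{-2}$), which forces $k(h)\simeq(h\epsilon(h))^{-1}\log((h\epsilon(h))^{-1})$. The final loss $e^{-Ck}$ comes from the variation of $e^{2k\omega}$ across the ball. This choice of $k$ is where the logarithm comes from. In your variables it means $\omega=\Phi(|x|/R_0')$ with $\tau\simeq R_0'\log R_0'$, i.e. a gain of order $\log R_0'$ per unit length.

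The genuine gap is the Carleman inequality for the interior case. The estimate behind the toy problem uses a linear weight, and a linear weight cannot make the inner ball dominate a surrounding annulus. So there is no ``weighted estimate with a radial weight'' to import; it has to be proved. Moreover, ``decreasing in $|x|$'' is not sufficient. The weight must satisfy H\"ormander's pseudoconvexity condition for $\Delta$, i.e. be strictly convex as a function of $\log|x|$. For $\omega=-|x|$ the bracket is negative on the characteristic set and the estimate fails; $\omega=-\log|x|$ is borderline. The estimate must also lose only a power of $k$, or the absorption above breaks down.

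The paper takes $\omega(|x|)=|x|^{-1}$ on $\{\tilde R/2<|x|<\tilde R_0\}$ and checks that $\{\mathrm{Re}\,p_\omega,\mathrm{Im}\,p_\omega\}=4r^{-7}>0$ there. It then uses G\aa{}rding to get $C_0k^{-1}\int u^2e^{2k\omega}\le\int|k^{-2}\Delta u|^2e^{2k\omega}$ for $u$ supported in that annulus; this is also why the cut-off must vanish near $0$.

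Once this estimate is in hand, $\tilde C'>C'$ needs no tuning of $\phi$. It follows from strict monotonicity alone, because the outer transition zone lies in $\{|x|\ge\frac34\tilde R_0-10h\epsilon(h)\}$ and the middle region lies in $\{|x|\le\frac12\tilde R_0\}$. You still need the Cauchy/Harnack step that converts the transition-zone errors $\tilde f_h^2+|k^{-1}\nabla\tilde f_h|^2$ into $L^2$ norms off $\bigcup_j 3D_{j,h}$. That step uses $\int_{3D_j\setminus 1.5D_j}\tilde f_h^2\simeq\int_{4D_j\setminus 3D_j}\tilde f_h^2$.
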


\begin{proof}
See Appendix~\ref{subsect: Toy problem}.
\end{proof}

For the interior case, consider $B(0, \tilde{R}) \subset g_h(B(0, R'))$ with $R'<R$ and replace $\tilde{R}_0$ by $R_0 - \tilde{\delta}$ (with $\tilde{\delta} > 0$ small) in the theorem such that $B(0, \tilde{R}_0) \subset g_h(B(0, R'_0 = R_0 - \delta))$ (with $\delta > 0$ small). For the boundary case, consider $B(0, \tilde{R}_2) \setminus B(0, \tilde{R}_1) \subset g_h(A(0, R_1, R_2))$ with $R<R_1 < R_2 < R_0$.

If we go backward by $g_h$, we get:
\begin{multline}
\int_{g_h^{-1}(B(0, \tilde{R}) \setminus \bigcup_j (3D_{j,h}))} |f_h|^2 |j(g_h)| + e^{-\tilde{C}' \frac{\log((h\epsilon(h))^{-1})}{h\epsilon(h)}} \int_{g_h^{-1}(B(0, \tilde{R}_0) \setminus B(0, \frac{1}{2}\tilde{R}_0) \bigcup_j (3D_{j,h}))} |f_h|^2 |j(g_h)|\\ \geq e^{-C' \frac{\log((h\epsilon(h))^{-1})}{h\epsilon(h)}} \int_{g_h^{-1}(B(0, \frac{1}{2}\tilde{R}_0) \setminus B(0, \tilde{R}) \bigcup_j (3D_{j,h}))} |f_h|^2 |j(g_h)|,
\end{multline}
and
\begin{equation}
\int_{g_h^{-1}(B(0, \tilde{R}_2) \setminus B(0, \tilde{R}_1) \bigcup_j (3D_{j,h}))} |f_h|^2 |j(g_h)| \geq e^{-C' \frac{\log((h\epsilon(h))^{-1})}{h\epsilon(h)}} \int_{g_h^{-1}(B(0, \tilde{R}_1) \setminus \bigcup_j (3D_{j,h}))} |f_h|^2 |j(g_h)|,
\end{equation}
where $j(g_h)$ is the Jacobian of $g_h$. Since $g_h$ maps $B(0, R_0)$ onto $B(0, R_0)$, so does $g_h^{-1}$. Moreover, $g_h^{-1}$ is also a $K_h$-quasi-conformal homeomorphism like $g_h$ (see Section~\ref{subsect: chg de var quasi-conforme}). Thanks to the distance distortion \eqref{eq: inegsurdistance}, we know that each ball $3D_{j,h}$ is mapped to a set contained in a ball $D'_{j,h}$ with radius $3 \times 32^2 h\epsilon(h)$. These balls are separated by $(\frac{C_1}{32} h\epsilon(h) - 4 \times 3 \times 32^2 h\epsilon(h)) = \tilde{C}_1 h\epsilon(h) > 100\ h\epsilon(h)$ if $C_1$ is large enough (which means choosing $C$ large enough). Finally, $g_h^{-1}(B(0, \frac{1}{2}\tilde{R}_0))$ contains $B_{\tilde{M}}$, a ball that contains $\tilde{M}$, a compact set which represents $M$ in $B(0, R_0)$.

This leads to the following inequalities:

\begin{itemize}
	\item \textbf{Interior case:}\\
	\begin{multline}
		\int_{B(0,R')} |f_{h}|^2|j(g_h)|+e^{-\tilde{C}' \frac{log((h\epsilon(h))^{-1})}{h\epsilon(h)}}\int_{B(0,R'_0)\setminus B(0,R')}|f_h|^2|j(g_h)| \\
		\geq e^{-C' \frac{log((h\epsilon(h))^{-1})}{h\epsilon(h)}}\int_{B_{ \tilde{M}}\setminus B(0,R')\bigcup_j D'_{j,h} }|f_h|^2|j(g_h)|,
	\end{multline}
	\item \textbf{Boundary case:}\\
	\begin{equation}
		\int_{A(0,R_1,R_2)} |f_{h}|^2|j(g_h)|\geq e^{-C'\frac{log((h\epsilon(h))^{-1})}{h\epsilon(h)}}\int_{B(0,R_1)\setminus \bigcup_j D'_{j,h}} |f_{h}|^2|j(g_h)|.
	\end{equation}
\end{itemize}
	
Because $| \varphi_h - 1 |$ is bounded by $1/2$ if $\epsilon(h)$  is small enough (lemma:~\ref{lem: small_laplacian_free_solution}), we can replace $f_h$ by $u_{per,h}$, where $u_{per,h}$ is a solution to $(-h^2\Delta + V_{per} - E)u_{per,h} = 0$:

\begin{itemize}
	\item \textbf{Interior case:}\\
		\begin{multline}
		\label{eq: ineg carleman avec j(g)}
		e^{-C' \frac{log((h\epsilon(h))^{-1})}{h\epsilon(h)}}\int_{B_{ \tilde{M}}\setminus B(0,R')\bigcup_j D'_{j,h} }|u_{per,h}|^2|j(g_h)|
		\\ \lesssim \int_{B(0,R')} |u_{per,h}|^2|j(g_h)|+e^{-\tilde{C}' \frac{log((h\epsilon(h))^{-1})}{h\epsilon(h)}}\int_{B(0,R'_0)\setminus B(0,R')}|u_{per,h}|^2|j(g_h)|, 
	\end{multline}
	\item \textbf{Boundary case:}\\
	\begin{equation}
		e^{-C'\frac{log((h\epsilon(h))^{-1})}{h\epsilon(h)}}\int_{B(0,R_1)\setminus \bigcup_j D'_{j,h}} |u_{per,h}|^2|j(g_h)|\lesssim \int_{A(0,R_1,R_2)} |u_{per,h}|^2|j(g_h)|.
	\end{equation}
\end{itemize}
	
\subsection{Distortions control}
\label{subsect: Gag-Nir-Caccio}

We now seek to control the term $j(g_h)$. This can be rewritten in complex form:
\begin{equation}
		\label{eq: jacobienComlexe}
		j(g_h)=|\partial_z g_h|^2-|\partial_{\Bar{z}}g_h|^2=(1-|\mu_h|^2)|\partial_z g_h|^2
\end{equation}
When $\epsilon(h)$ is small enough, $|\mu_h| < 1/2$, so we need only to focus on $|\partial_z g_h|^2$. Furthermore:
\begin{equation}
		\label{eq: derivee_de_gh}
		\partial_z g_h = \partial_z \psi_h \times (\partial_z R_h) \circ \psi_h
\end{equation}
By Proposition~\ref{prop: def_de_psi_h}, we have some controls on $\partial_z \psi_h$. But first, we need to get rid of $\partial_z R_h$.
\begin{lem}
Let $K$ be a compact set of $B(0, R_0)$ such that $d(K, B(0, R_0)^c) > \delta > 0$. For $h$ small enough:
\begin{itemize}
    \item There is a constant $\tilde{A} > 0$ independent of $h$ such that:
    \begin{equation}
    \sup_{z \in \psi_h(K)} |\partial_z R_h(z)| \leq \tilde{A}.
   \end{equation}

    \item There is a constant $\tilde{B} > 0$ independent of $h$ such that:
    \begin{equation}
    \frac{1}{\tilde{B}} \leq \inf_{z \in \psi_h(K)} |\partial_z R_h(z)|.
    \end{equation}
\end{itemize}
\end{lem}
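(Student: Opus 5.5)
The plan is to use that $\psi_h$ converges to the identity as $h\to 0$, uniformly on compact sets. Then the Riemann maps $R_h$ from $\psi_h(B(0,R_0))$ onto $B(0,R_0)$ converge, and their derivatives are controlled by the Koebe distortion theorem.

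\textbf{Step 1: $\psi_h$ is uniformly close to the identity.} By Mori's inequality for the normalized map $\psi_h$ (fixing $0$ and $\infty$), the family $(\psi_h)_h$ is equicontinuous on compact sets, and so is the family of inverses. Since $K_h\le 1+\tilde C\epsilon(h)^2\to1$ and $\mu_h$ is supported in $B(0,R_0)$, every limit point is a $1$-quasiconformal map of the plane fixing $0$ and $\infty$, hence a similarity $z\mapsto az$. To pin down $a=1$, I will use the estimate of Proposition~\ref{prop: def_de_psi_h}: $\|\partial_z\psi_h-1\|_{L^p}\lesssim p^2\|\mu_h\|_p\lesssim p^2\epsilon(h)^2\to0$. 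Together with $\|\partial_{\bar z}\psi_h\|_p\le\|\mu_h\|_\infty\|\partial_z\psi_h\|_p$, this shows $D\psi_h\to \mathrm{Id}$ in $L^p_{loc}$. Hence $a=1$, and $\sup_{|z|\le 2R_0}|\psi_h(z)-z|\to0$.

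\textbf{Step 2: Carathéodory convergence and distortion.} From Step 1, $\Omega'_h:=\psi_h(B(0,R_0))$ contains $B(0,R_0-\eta_h)$ and is contained in $B(0,R_0+\eta_h)$, with $\eta_h\to0$. Comparing $R_h$ with the Riemann maps of these two discs via the Schwarz lemma gives $|R_h'(0)|\to1$, and $R_h\to\mathrm{Id}$ locally uniformly by Carathéodory's kernel theorem. Next, $\psi_h(K)$ lies in the compact set $\overline{B(0,R_0-\delta/2)}$ for $h$ small, at distance at least $\delta/2-\eta_h\ge\delta/4$ from $\partial\Omega'_h$. Applying the Koebe distortion theorem to the univalent map $R_h$ on discs $B(w,\delta/4)\subset\Omega'_h$ with $w\in\psi_h(K)$, and chaining along a bounded number of such discs from $0$, bounds $|R_h'(w)|$ above and below by constants times $|R_h'(0)|$. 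This yields $\tilde A$ and $\tilde B$ independent of $h$. Alternatively, $R_h'\to1$ uniformly on compacts by Weierstrass, which gives the same conclusion directly.

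\textbf{Main obstacle.} The delicate point is Step 1: obtaining uniform control of $\psi_h$ near $\partial B(0,R_0)$, and identifying the limit as the identity rather than merely a similarity. If the $L^p$ argument proves awkward, I will instead use the explicit representation $\psi_h=z+\mathcal C(\partial_{\bar z}\psi_h)$, where $\mathcal C$ is the Cauchy transform. Since $\partial_{\bar z}\psi_h=\mu_h\partial_z\psi_h$ has $L^p$ norm $O(\epsilon(h)^2)$ on the bounded support and $p>2$, Hölder's inequality gives $|\psi_h(z)-z|=O(\epsilon(h)^2)$ uniformly on $B(0,2R_0)$.
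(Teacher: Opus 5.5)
Your proof is correct, but Step 2 takes a different route from the paper. Step 1 is essentially the paper's argument. The paper gets $\|\psi_h-\mathrm{Id}\|_{L^\infty(B(0,R_0))}\to 0$ directly from the $L^3$ bound on $\partial_z\psi_h-1$ via the Sobolev embedding $W^{1,3}\subset C^{0,1/3}$.

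For the derivative bounds, the paper uses plain Cauchy estimates twice. First, $R_h$ is bounded by $R_0$ on $\psi_h(B(0,R_0))$, and $\psi_h(K)$ stays a fixed distance from the boundary of that domain; this gives $\tilde A$. Second, $R_h^{-1}$ is bounded by $R_0+a$ on $B(0,R_0)$. This gives $\tilde B$ once one knows that $g_h(K)=R_h(\psi_h(K))$ stays uniformly away from $\partial B(0,R_0)$, which the paper takes from Mori's inequality for $g_h$.

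You instead anchor everything at the origin. The Schwarz lemma applied to $B(0,R_0-\eta_h)\subset\psi_h(B(0,R_0))\subset B(0,R_0+\eta_h)$ pins $|R_h'(0)|$ between $R_0/(R_0+\eta_h)$ and $R_0/(R_0-\eta_h)$. Koebe distortion, chained along $O(R_0/\delta)$ discs of radius $\delta/4$, then transports this to both bounds on $\psi_h(K)$ at once. Your route uses only univalence and the normalization $R_h(0)=0$, and needs no information about where $g_h$ sends $K$, so no Mori input. The paper's route is shorter, and its upper bound needs no normalization at all.

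Two small corrections; neither affects your conclusion.

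First, fixing $0$ and $\infty$ does not make a family of $K$-quasiconformal maps of the plane equicontinuous: the dilations $z\mapsto az$ are conformal and fix both points. So the normal-family argument at the start of Step 1 is not justified as written. It is also unnecessary. For a fixed $p>2$ you have $\|D\psi_h-\mathrm{Id}\|_{L^p(B(0,2R_0))}=O(\epsilon(h)^2)$, and together with $\psi_h(0)=0$, Morrey's inequality gives $\sup_{|z|\le 2R_0}|\psi_h(z)-z|=O(\epsilon(h)^2)$. This is exactly the paper's step. Your Cauchy-transform fallback also works, provided the transform is normalized to vanish at $0$.

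Second, the paper fixes $R_h$ only by $R_h(0)=0$, so $R_h$ is determined only up to a rotation. The claims $R_h\to\mathrm{Id}$ and $R_h'\to 1$ therefore hold only after normalizing $R_h'(0)>0$; otherwise state $|R_h'|\to 1$ instead. This is harmless, since only $|\partial_z R_h|$ enters the statement.
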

\begin{proof}
In Appendix~\ref{subsect: controle de partialz Rh(z)}.
\end{proof}

We deduce from the lemma:
\begin{itemize}
\item \textbf{Interior case}:
	\begin{multline}
		e^{-C' \frac{log((h\epsilon(h))^{-1})}{h\epsilon(h)}}\int_{B_{ \tilde{M}}\setminus B(0,R')\bigcup_j D'_{j,h}} \left| u_{per,h}(z)\right|^2 \left| \partial_z \psi_h\left(z\right)\right|^2 dz \\   
		\lesssim \int_{B(0,R')} \left|u_{per,h}(z)\right|^2\left|\partial_z \psi_h\left(z\right)\right|^2dz+e^{-\tilde{C}' \frac{log((h\epsilon(h))^{-1})}{h\epsilon(h)}}\int_{B(0,R_0')\setminus B(0,R')}|u_{per,h}(z)|^2\left|\partial_z \psi_h\right|^2dz,
	\end{multline}
\item \textbf{Boundary case}:
	\begin{equation}
		e^{-C' \frac{log((h\epsilon(h))^{-1})}{h\epsilon(h)}}\int_{B(0,R_1)\setminus \bigcup_j D'_{j,h}} \left| u_{per,h}(z)\right|^2 \left| \partial_z \psi_h\left(z\right)\right|^2 dz  
		\lesssim \int_{A(0,R_1,R_2)} \left|u_{per,h}(z)\right|^2\left|\partial_z \psi_h\left(z\right)\right|^2dz.
	\end{equation}
\end{itemize}

It remains to get rid of $|\partial_z \psi_h(z)|$. We perform this little calculation:
	\begin{align}
	\left|\partial_z \psi_h \left(z\right)\right|^2
		&\geq \left|\partial_z \psi_h \left(z\right)+1-1\right|^2\nn\\
		&\geq \left(\left|\partial_z \psi_h \left(z\right)-1\right|^2+1-2\left(\left|\partial_z \psi_h \left(z\right)-1\right|\right)\right)\nn\\
		&\geq - \left|\partial_z \psi_h \left(z\right)-1\right|^2 + \frac{1}{2} 
	\end{align}
we use here the fact that $2ab \leq \frac{1}{2}a^2 + 2b^2$ with $a = 1$ and $b = |\partial_z \psi_h(z) - 1|^2$. With this calculation, we obtain:
\begin{itemize}
\item \textbf{Interior case}:
	\begin{multline}
		e^{-C'  \frac{log((h\epsilon(h))^{-1})}{h\epsilon(h)}}\left(\int_{B_{\tilde{M}}\setminus B(0,R')\bigcup_j D'_{j,h}}  \frac{1}{2} |u_{per,h}(z)|^2 dz -\int_{B_{\tilde{M}}} |u_{per,h}(z)|^2\left|\partial_z \psi_h \left(z\right)-1\right|^2dz\right) \\   
		\lesssim \int_{B(0,R')} \left|u_{per,h}(z)\right|^2\left|\partial_z \psi_h\left(z\right)\right|^2dz+e^{-\tilde{C}'  \frac{log((h\epsilon(h))^{-1})}{h\epsilon(h)}}\int_{B(0,R_0')\setminus B(0,R')} \left|u_{per,h}(z)\right|^2\left|\partial_z \psi_h\left(z\right)\right|^2dz
	\end{multline}
\item \textbf{Boundary case}:
	\begin{multline}
		e^{-C'  \frac{log((h\epsilon(h))^{-1})}{h\epsilon(h)}}\left(\int_{B(0,R_1)\setminus \bigcup_j D'_{j,h}} \frac{1}{2} |u_{per,h}(z)|^2 dz -\int_{B(0,R_1)} |u_{per,h}(z)|^2\left|\partial_z \psi_h \left(z\right)-1\right|^2dz \right) \\
		\lesssim \int_{A(0,R_1,R_2)} \left|u_{per,h}(z)\right|^2\left|\partial_z \psi_h\left(z\right)\right|^2dz.
	\end{multline}
\end{itemize}

 By Hölder's inequality, with the couple $(p, p') \in \mathbb{R}^2$ to be fixed later, we get:
	\begin{align}
		 \int_{K_1} \left|u_{per,h}\left(z\right)\right|^2\left|\partial_z \psi_h\left(z\right)\right|^2dz
		\leq \left(\int_{K_1} |u_{per,h}(z)|^{2p'}dz\right)^{\frac{2}{2p'}}\left(\int_{K_1} \left|	\partial_z \psi_h \left(z\right)\right|^{2p}dz\right)^{\frac{2}{2p}}
	\end{align}
with $K_1$ a compact set of $B(0,R_0)$.

	\begin{align}
		\label{eq: Holder sur A1}
		\int_{K_2} |u_{per,h}(z)|^2\left|\partial_z \psi_h \left(z\right)-1\right|^2dz
		\leq \left(\int_{K_2} |u_{per,h}(z)|^{2p'}dz\right)^{\frac{2}{2p'}}\left(\int_{K_2} \left|\partial_z \psi_h \left(z\right)-1\right|^{2p}dz \right)^{\frac{2}{2p}}
	\end{align}
with $K_2$  compact set of $B(0,R_0)$.

From Proposition~\ref{prop: def_de_psi_h}, we know that:
	\begin{equation}
		\label{eq: Ahlfors}
		\lVert \partial_z \psi_h-1 \lVert_{L^{2p}(B(0,R_0))}\leq \frac{|B(0,R_0)|^{1/2p}C_{2p} \lVert \mu_h \lVert_\infty}{1-C_{2p} \lVert \mu_h \lVert_\infty} 
	\end{equation}
with:
	\begin{equation}
		\label{eq:cstAhlfors}
		C_p:=\frac{\pi^2}{4}\frac{1}{\frac{p}{p-1}^{2/p}-1}\sim_{p \rightarrow +\infty}C\,p^2
	\end{equation}
	
as soon as:
	
	\begin{equation}
		\label{eq: condition_dutilisation_controle_mu}
		C_{2p} \lVert \mu_h \lVert_\infty <1.
	\end{equation}

To reduce to an $L^2$ norm on $u_{per,h}$, we do an interpolation:
\begin{prop}[Gagliardo-Nirenberg \cite{ASNSP_1959_3_13_2_115_0}, Lecture 2, on $B(0, R_0)$]
\label{prop: Gag-Nir}
Let $1 \leq q, r \leq +\infty$ be two real numbers, $j$ and $m$ two non-negative integers, and $\theta \in [j/m, 1]$ such that:
		\begin{equation}
			\frac{1}{2p'}= \frac{j}{2}+\theta \left(\frac{1}{r}-\frac{m}{2} \right)+\frac{1-\theta}{q}
		\end{equation}
Then we get for $u \in L_{loc}^1(B(0, R_0))$:
		\begin{equation}
			\lVert D^j u\lVert_{L^{2p'}(B(0,R_0))}\leq C(p')\lVert u\lVert_{W^{m,r}(B(0,R_0))}^{\theta}\lVert u\lVert_{L^q(B(0,R_0))}^{1-\theta} 
		\end{equation}
with $C(p')$ bounded near $1$.
\end{prop}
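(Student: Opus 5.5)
The plan is to deduce the statement on $B(0,R_0)$ from the classical whole-plane Gagliardo--Nirenberg inequality of \cite{ASNSP_1959_3_13_2_115_0}, using an extension operator. Since the paper only uses the case $j=0$, $m=1$, $r=q=2$, I will make the argument explicit there and treat the general case by citation.

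\textbf{Step 1: extension.} I would first fix a bounded linear extension operator $\mathcal{E}$ from functions on $B(0,R_0)$ to functions on $\R^2$, for instance a higher-order reflection across the circle $\partial B(0,R_0)$ followed by a smooth cutoff. It should be bounded simultaneously
\[
W^{m,r}(B(0,R_0))\to W^{m,r}(\R^2)\quad\text{and}\quad L^q(B(0,R_0))\to L^q(\R^2),
\]
for all exponents in the relevant range. This simultaneity is the point of choosing a reflection-type (or Stein) extension rather than an extension adapted to one space only. Because the ball is fixed and smooth, the operator norms depend only on $R_0$, $m$, $r$ and $q$.

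\textbf{Step 2: whole-space inequality.} On $\R^2$ I would invoke Nirenberg's inequality
\[
\|D^j v\|_{L^{2p'}}\le C\,\|D^m v\|_{L^r}^{\theta}\,\|v\|_{L^q}^{1-\theta},
\]
valid under the stated scaling relation. The usual caveat applies: when $m-j-2/r$ is a non-negative integer one must take $\theta<1$. In the case used later ($j=0$, $m=1$, $r=q=2$, so $\theta=1-1/p'$), I would prove this directly rather than cite it. Apply Gagliardo's embedding $\|w\|_{L^2}\lesssim\|\nabla w\|_{L^1}$ to $w=|v|^{p'}$. Then $|\nabla w|\le p'|v|^{p'-1}|\nabla v|$, and Cauchy--Schwarz gives
\[
\|v\|_{L^{2p'}}^{p'}\lesssim p'\,\|v\|_{L^{2p'-2}}^{p'-1}\|\nabla v\|_{L^2}.
\]
Interpolating $\|v\|_{L^{2p'-2}}$ between $L^2$ and $L^{2p'}$ by Hölder and absorbing then yields the inequality with the expected $\theta$.

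\textbf{Step 3: return to the ball and track the constant.} Applying Step 2 to $v=\mathcal{E}u$ and restricting to $B(0,R_0)$ gives the statement with $C(p')=C_{\mathrm{GN}}(p')\,\|\mathcal{E}\|^{\theta}\,\|\mathcal{E}\|^{1-\theta}$. Here $\|D^m\mathcal{E}u\|_{L^r}$ is controlled by the full norm $\|u\|_{W^{m,r}}$ rather than a seminorm; this is why the inhomogeneous norm appears.

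The remaining task is to check that $C(p')$ stays bounded as $p'\to1^+$. This is where I expect the only real care to be needed. At $p'=1$ the relation forces $\theta=0$, and the inequality is trivial with constant $1$ (for $q=2$). The explicit computation of Step 2 has constants polynomial in $p'$, hence continuous at $p'=1$. The extension norms are uniform in the exponents. Together these give the bound near $1$.
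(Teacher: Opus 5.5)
\textbf{Gap in Step 2.} Your explicit argument fails exactly in the regime the statement is about. The Gagliardo trick gives $\|v\|_{L^{2p'}}^{p'}\lesssim p'\,\|v\|_{L^{2p'-2}}^{p'-1}\|\nabla v\|_{L^2}$. You then interpolate $\|v\|_{L^{2p'-2}}$ between $L^2$ and $L^{2p'}$, and Hölder interpolation requires $2\le 2p'-2$, i.e.\ $p'\ge 2$.

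The paper, however, takes $p=\log(h^{-1})$, so $p'=p/(p-1)\to 1^+$. ``$C(p')$ bounded near $1$'' is therefore a statement about $1<p'<2$. In that range $0<2p'-2<2$ (even $<1$ when $p'<3/2$), so $L^{2p'-2}$ lies below the interval $[2,2p']$. On $\R^2$ the quantity $\|v\|_{L^{2p'-2}}$ is not controlled by $\|v\|_{L^2}$ and $\|v\|_{L^{2p'}}$ at all.

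Using the compact support of $\mathcal{E}u$ to bound $\|v\|_{L^{2p'-2}}\lesssim\|v\|_{L^2}$ does not help. It yields $\|v\|_{L^{2p'}}\lesssim\|\nabla v\|_{L^2}^{1/p'}\|v\|_{L^2}^{1/p}$, with the exponents swapped relative to $\theta=1/p$. That version is useless downstream: $\|u\|_{H^1}\sim h^{-1}\|u\|_{L^2}$, and the paper needs the $H^1$ factor to carry the exponent $1/p$ so that $h^{-1/p}=O(1)$. So your Step 2 only produces constants polynomial in $p'$ for $p'\ge 2$, and the continuity argument at $p'=1$ in Step 3 has nothing to rest on.

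\textbf{Repair.} Run your Gagliardo computation only at $p'=2$, where $2p'-2=2$ and no interpolation is needed. This gives Ladyzhenskaya's inequality $\|v\|_{L^4}^2\le C\|v\|_{L^2}\|\nabla v\|_{L^2}$. For $1\le p'\le 2$ (equivalently $p\ge 2$), Hölder then gives
\[
\|v\|_{L^{2p'}}\le\|v\|_{L^2}^{1-2/p}\|v\|_{L^4}^{2/p}\le C^{1/p}\,\|\nabla v\|_{L^2}^{1/p}\|v\|_{L^2}^{1/p'}.
\]
This is exactly the case $\theta=1/p$, with a constant tending to $1$. Composing with your extension operator gives $C(p')\le\max(1,C)\,\|\mathcal{E}\|_{H^1(B(0,R_0))\to H^1(\R^2)}^{1/p}\,\|\mathcal{E}\|_{L^2\to L^2}^{1/p'}$, which is uniformly bounded as $p'\to 1$.

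For comparison, the paper gives no argument: it cites Nirenberg's Lecture~2 and does not track the constant as $p'\to1$. Your extension-plus-whole-space structure is sound; only the uniformity step needs this fix.
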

	
By taking $q=2$, $j=0$, $r=2$, $m=1$ and $1-\theta=1/p'$, and $u=u_{per,h}\,\chi$ with $\chi$ a test function with support in $B(0,R_0)$ such that $\chi=1$ on $K_1$ or $K_2$,  we obtain:
		\begin{equation}
		\lVert u_{per,h}\lVert_{L^{2p'}(K)}\leq C(p')\lVert u_{per,h}\lVert_{H^1(K')}^{1/p}\lVert u_{per,h}\lVert_{L^2(K_2)}^{1/p'} 
	\end{equation}
with $K = K_1$ or $K_2$, and $K'$ is a set which is containing $K$.

So now we have $L^2$ and $H^1$ norms. In order to replace the $H^1$ norm by an $L^2$ norm, we use a Cacciopoli lemma:
\begin{prop}[Cacciopoli]
\label{prop: Caccio}
Let $\eta \in \mathcal{C}_0^\infty(\Omega)$ with $\Omega$ an open set of $\mathbb{R}^2$. We get:
		\begin{equation}
			\int_{\Omega}|\eta\nabla u_{per,h}|^2 \leq 6\int_{\Omega}u_{per,h}^2 \left( |\nabla \eta|^2 + \frac{|V_{per}-E|}{h^2}\eta^2 \right)
		\end{equation}
\end{prop}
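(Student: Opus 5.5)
The plan is the standard energy argument: test the equation against $\eta^2 u_{per,h}$ and absorb the cross term. Since $u_{per,h}\in\mathcal{C}^1$ by Lemma~\ref{lem: C1_regularity_solution_Schrodinger} and $\eta$ has compact support in $\Omega$, the weak formulation of $-h^2\Delta u_{per,h}+(V_{per}-E)u_{per,h}=0$ may be tested against $\eta^2 u_{per,h}$. This test function is admissible: it lies in $W^{1,2}_0(\Omega)$, and a density argument as in the proof of Lemma~\ref{lem: positivity_radius} applies. Writing $u=u_{per,h}$ and dividing by $h^2$, we obtain
\begin{equation*}
\int_\Omega \nabla u\cdot\nabla(\eta^2 u) = -\int_\Omega \frac{V_{per}-E}{h^2}\,\eta^2 u^2 .
\end{equation*}

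Next expand $\nabla(\eta^2 u)=\eta^2\nabla u+2\eta u\nabla\eta$. This gives
\begin{equation*}
\int_\Omega \eta^2|\nabla u|^2 = -2\int_\Omega \eta u\,\nabla u\cdot\nabla\eta-\int_\Omega \frac{V_{per}-E}{h^2}\eta^2u^2 .
\end{equation*}
The cross term is handled by Young's inequality, $2|\eta\nabla u|\,|u\nabla\eta|\le \frac12|\eta\nabla u|^2+2u^2|\nabla\eta|^2$. The potential term is bounded above by $\int u^2\frac{|V_{per}-E|}{h^2}\eta^2$. Absorbing $\frac12\int\eta^2|\nabla u|^2$ into the left-hand side yields
\begin{equation*}
\int_\Omega|\eta\nabla u|^2\le 4\int_\Omega u^2|\nabla\eta|^2+2\int_\Omega \frac{|V_{per}-E|}{h^2}\eta^2u^2,
\end{equation*}
which is stronger than the claimed bound with constant $6$.

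The only delicate point is justifying the absorption, which requires $\int\eta^2|\nabla u|^2<\infty$. This follows from $u\in\mathcal{C}^1$ on a neighbourhood of the compact set $\operatorname{supp}\eta$; it presumes that $\Omega$ lies inside the region where the equation holds. The justification that $\eta^2u$ is a legitimate test function, by approximation with $\mathcal{C}_0^\infty$ functions in $W^{1,2}$, is routine.
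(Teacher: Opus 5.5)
Your proof is correct and follows the paper's route: both test the equation against $\eta^2u_{per,h}$ and control the cross term with an elementary quadratic inequality. The paper writes $\nabla u\cdot\nabla(u\eta^2)=|\nabla(u\eta)|^2-|u\nabla\eta|^2$ and then uses $|a+b|^2\ge\frac12|a|^2-2|b|^2$ instead of Young plus absorption, which is why it ends with the cruder constant $6$; your justification of the test function via Lemma~\ref{lem: C1_regularity_solution_Schrodinger}, and your remark that $\Omega$ must lie where the equation holds, are points the paper leaves unstated.
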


\begin{proof}
In Appendix, Section~\ref{subsect: Cacciopoli}.
\end{proof}

By taking $\eta = 1$ on $K'$ and $\text{Supp}(\eta) \subset K''$ with $K' \subset K''$, we therefore deduce from Cacciopoli (Proposition~\ref{prop: Caccio}):
	\begin{equation}
		\lVert u_{per,h}\lVert_{H^1(K')}\lesssim h^{-1}\lVert u_{per,h}\lVert_{L^2(K'')}
	\end{equation}
Consequently we get this interpolation estimate:
	\begin{equation}
		\label{eq: interpolation norme L2p}
		\lVert u_{per,h}\lVert_{L^{2p'}(K)}\lesssim h^{-1/p}\lVert u_{per,h}\lVert_{L^2{(K'')}}.
	\end{equation}
	We choose $p = \log(h^{-1})$, so that $h^{-1/p}=1$.
	
Thus with $K_2$, we get from Equations~\eqref{eq: interpolation norme L2p},~\eqref{eq:cstAhlfors},~\eqref{eq: Ahlfors}, and~\eqref{eq: Holder sur A1}:
	\begin{equation}
		 \int_{K_2} \left|u_{per,h}\left(z\right)\right|^2\left|\partial_z \psi_h\left(z\right)\right|^2dz \lesssim \log(h^{-1})^4 \lVert \mu_h\lVert_\infty^2 \|u_{per,h}\|_{L^2(K_2")}^2
	\end{equation}
And according to the definition of $\mu_h$ (see Definition~\ref{def: definition du coeff de Beltrami}), we obtain this estimate:
\begin{equation}
	\int_{K_2} \left|u_{per,h}\left(z\right)\right|^2\left|\partial_z \psi_h\left(z\right)\right|^2dz \lesssim (\log(h^{-1})\epsilon(h))^4 \|u_{per,h}\|_{L^2(K_2")}^2.
\end{equation}
This makes sense because the condition~\eqref{eq: condition_dutilisation_controle_mu} is verified in this case. Indeed, $\epsilon(h) = \frac{c}{\log(R_0/h)}$ (see Definition~\ref{def: epsilon(h)}), thus $\log(h^{-1})\epsilon(h) \ll 1$ for $c$ small enough. Hence:
\begin{equation}
	\int_{K_2} \left|u_{per,h}\left(z\right)\right|^2\left|\partial_z \psi_h\left(z\right)\right|^2dz=o_c(1) \|u_{per,h}\|_{L^2(K_2")}^2.
\end{equation}

For $K_1$  we obtain:
\begin{equation}
	\int_{K_1} \left|u_{per,h}\left(z\right)\right|^2\left|\partial_z \psi_h\left(z\right)\right|^2dz \lesssim  \|u_{per,h}\|_{L^2(K_1")}^2.
\end{equation}

Finally, we can apply these results to our two cases:

\begin{itemize}
	\item \textbf{Interior case}:
	We use the facts $B_{\tilde{M}}$ and $B(0,R_0')\setminus B(0,R')$ are subset of $B(0,R_0)$ and $B(0,R')\subset B(0,R)$:
	\begin{multline}
		e^{-C' \frac{log((h\epsilon(h))^{-1})}{h\epsilon(h)}}\left(\int_{B_{\tilde{M}}\setminus B(0,R)\bigcup_j D'_{j,h}}  |u_{per,h}(z)|^2 dz +o_c(1)\|u_{per,h}\|_{L^2(B(0,R_0))}^2 \right) \\   
		 \lesssim \int_{B(0,R)} \left|u_{per,h}(z)\right|^2dz+e^{-\tilde{C}' \frac{log((h\epsilon(h))^{-1})}{h\epsilon(h)}}\int_{B(0,R_0)} \left|u_{per,h}(z)\right|^2dz.
	\end{multline}
	\item \textbf{Boundary case}: We use the facts  $B(0,R_1)\subset B(0,R_2)$ and $A(0,R_1,R_2)\subset A(0,R,R_0)$:
	\begin{multline}
		e^{-C'  \frac{log((h\epsilon(h))^{-1})}{h\epsilon(h)}}\left(\int_{B(0,R_1)\setminus \bigcup_j D'_{j,h}} |u_{per,h}(z)|^2 dz+ o_c(1) \|u_{per,h}\|_{L^2(B(0,R_2))}^2 \right) \\
		\lesssim \int_{A(0,R,R_0)} \left|u_{per,h}(z)\right|^2\left|\partial_z \psi_h\left(z\right)\right|^2dz.
	\end{multline}
\end{itemize}

\subsection{Carleman's inequality  on contracting balls}
\label{subsect: Carleman_classique}
In this section, we want to get rid of the holes that we create for the proof above. The holes are balls  $D'_{j,h}$ with radius $3 \times 32^2 h\epsilon(h)$, separated by a distance larger than $\tilde{C}_1 h\epsilon(h)$ with $\tilde{C}_1 \gg 1$.
On these $D'_{j,h}$, we know only that $u_{per,h}$ is a solution of $(-h^2\Delta + V_{per} - E)u_{per,h} = 0$ with $V_{per} \in L^\infty(B(0,R_0),\R)$.

Thanks to the contracting character in $h\epsilon(h)$, we can obtain this Carleman inequality:

\begin{thm}
\label{thm: Carlm contractant}
Let $u_h$ be a solution of $(-h^2\Delta + V - E)u_h = 0$ with $E \in I$ compact subset of $\mathbb{R}$, $V \in L^\infty(2D_h,\R)$ and $2D_h$ is a ball with radius $\sim h\epsilon(h)$. Then there is $C_0 > 0$ independent of $h$ and $u_h$ such that for $h > 0$ small enough:
			\begin{equation}
				\int_{D_{h}} |u_h(x)|^2dx \leq C_0 \int_{A_{h}} |u_h(x)|^2dx 
			\end{equation}

where $A_{h} = 2D_{h} \setminus \D_{h}$.
\end{thm}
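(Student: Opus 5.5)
The plan is to rescale the ball $2D_h$ to unit size, so that the potential becomes a small perturbation of the Laplacian. After that, the statement reduces to a uniform unique-continuation (three-ball or Carleman) estimate on a fixed annulus, with constants independent of $h$.

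\textbf{Rescaling.} Let $\rho_h\sim h\epsilon(h)$ be the radius of $D_h$ and $x_h$ its centre. Set $v_h(y)=u_h(x_h+\rho_h y)$ for $y\in B(0,2)$. Then $v_h$ solves
\[
-\Delta v_h + W_h v_h = 0,\qquad W_h(y)=\frac{\rho_h^2}{h^2}\bigl(V(x_h+\rho_h y)-E\bigr),
\]
and $\|W_h\|_{L^\infty}\lesssim \epsilon(h)^2(\|V\|_\infty+\sup_I|E|)\to 0$. The two integrals in the statement both scale by the same factor $\rho_h^2$. The claim is therefore equivalent to
\[
\int_{B(0,1)}|v_h|^2\le C_0\int_{B(0,2)\setminus B(0,1)}|v_h|^2
\]
for every solution of $-\Delta v+Wv=0$ on $B(0,2)$ with $\|W\|_\infty\le \eta$ and $\eta$ small.

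\textbf{Uniform estimate for small potentials.} I would prove this with a standard Carleman estimate for the flat Laplacian with a fixed radial weight, for instance $\phi(y)=-\log|y|$ suitably convexified, or $e^{\lambda|y|^{-1}}$. Its constants depend only on the geometry of $B(0,2)$. Because $\|W\|_\infty$ is small, the term $Wv$ is absorbed into the Carleman left-hand side at a fixed large $\tau$, with no dependence on $h$.

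Applied to $\chi v$, where $\chi=1$ on $B(0,3/2)$ and $\operatorname{supp}\chi\subset B(0,2)$, this estimate controls $v$ on a smaller ball by $v$ on the annulus where $\nabla\chi\neq0$. To include the whole of $B(0,1)$, and not only its interior, I would take $\chi=1$ on $B(0,1+\delta)$ and cut off in $1+\delta<|y|<2$. The commutator terms involve $\nabla v$ there, and I would handle them with the Caccioppoli estimate (Proposition~\ref{prop: Caccio} in rescaled form, where $|W|$ is small). That bounds them by $\int_{1<|y|<2}|v|^2$ up to a constant.

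\textbf{Alternative route and main obstacle.} A more elementary option avoids the Carleman step. For $\eta$ small there is a unique solution operator for $-\Delta+W$ on $B(0,r)$ with Dirichlet data, by Lemma~\ref{lem: small_laplacian_free_solution}-type perturbation with Poincaré constant $\sim1$. Then $v=\mathcal P_W(v|_{\partial B(0,r)})$ for $r\in(1,2)$, and the maximum/representation bound $\|v\|_{L^2(B(0,1))}\lesssim\|v\|_{L^2(\partial B(0,r))}$ holds uniformly. Averaging in $r\in(1,2)$ turns the boundary norm into $\int_{1<|y|<2}|v|^2$.

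The main obstacle is the precise justification of the $L^2$ boundary-to-interior bound with constants uniform in $W$. I would first try writing $v=H+w$, where $H$ is the harmonic extension of the boundary data. Then $-\Delta w=-W(H+w)$ with $w\in H^1_0$. The Poincaré inequality gives $\|w\|_{L^2}\lesssim\eta(\|H\|_{L^2}+\|w\|_{L^2})$, and the Poisson kernel bound for $H$ closes the estimate. The constant $C_0$ then depends only on $\eta_0$, which is independent of $h$.
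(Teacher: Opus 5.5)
Your proposal is correct. The argument that actually closes it is your ``alternative route'', which differs genuinely from the paper's.

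The paper stays inside the semiclassical Carleman machinery, in these steps:
\begin{enumerate}
\item It rescales the operator with $\tilde h=h^{4/3}\delta^{1/3}$.
\item It conjugates by weights $\epsilon(h)^{-4/3}|x-x_{i,h}|^{4/3}$ centred at two distinct points, because a single radial weight has a critical point at the centre, where the Poisson-bracket condition fails.
\item It proves a Garding-type estimate on the rescaled annuli at the fixed parameter $\delta^{1/3}$ and absorbs the potential.
\item It glues the two estimates with the cut-offs $\chi_{1,h},\chi_{2,h}$ and controls the commutators by Caccioppoli.
\item It concludes because all weights lie between $1$ and a constant $C(\delta)$ on a ball of radius $2h\epsilon(h)$.
\end{enumerate}

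You instead rescale to unit size and note $\|W_h\|_\infty\lesssim\epsilon(h)^2$. You split $v=H+w$ on $B(0,r)$, with $H$ the harmonic extension of $v|_{\partial B(0,r)}$ and $w\in W^{1,2}_0(B(0,r))$; this is legitimate since $v\in\mathcal{C}^1$ by Lemma~\ref{lem: C1_regularity_solution_Schrodinger}. You absorb $w$ through the Poincaré inequality, bound $\|H\|_{L^2(B(0,r))}\lesssim\|v\|_{L^2(\partial B(0,r))}$ by the Poisson/Fourier representation, and average over $r\in(1,2)$.

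This is complete and much lighter: no pseudodifferential calculus, no gluing, not even Caccioppoli. It also makes explicit that the statement is purely perturbative. That is in fact all the paper's proof uses, since once the weights are bounded by $C(\delta)$ the Carleman gain plays no role. The paper's formulation has the advantage of fitting the large-parameter Carleman framework. That framework would still apply, with exponential constants, if the potential were not small at the scale of the ball.

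Your first route, via Carleman, is closer to the paper but does not work as written. The weights $-\log|y|$ and $e^{\lambda/|y|}$ are singular at the origin. Carleman estimates with such radial weights hold only for functions supported away from the centre. So they cannot be applied to $\chi v$ with $\chi=1$ near $0$; this is exactly the obstruction the paper circumvents with two off-centre weights.

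In your small-potential setting no large parameter is needed at all. For $w=\chi v$, Poincaré on $B(0,2)$ together with $\|\nabla w\|^2=\langle-\Delta w,w\rangle$ gives $\|w\|_{L^2}\le C_P\|\Delta w\|_{L^2}$. Hence $\|w\|\le C_P\eta\|w\|+C_P\|[\Delta,\chi]v\|$. Taking $C_P\eta\le 1/2$ and applying Caccioppoli on $\operatorname{supp}\nabla\chi$, which is compactly contained in $\{1<|y|<2\}$, already gives the claim.
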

It is interesting to note that unlike the classic Carleman inequalities, this one has no exponential weight.

We can apply the above theorem to $u_{per,h}$ on $D'_{j,h}$ to control $L^2$-norm on $D'_{j,h}$ by the $L^2$-norm on the set $A_{j,h}=2D'_{j,h}\setminus D'_{j,h}$. In order to treat the balls $\D'_{j,h}$ separately, we choose $\tilde{C}_1$ large enough (Section~\ref{subsect: retourarriere}) such that the rings $A_{j,h}$ are disjointed, i.e. if $i \neq j$, $A_{j,h} \cap A_{i,h} = \emptyset$. Thus, we have:
\begin{itemize}
    \item \textbf{Interior case}:
		\begin{align}
			\int_{B_{\tilde{M}}\setminus B(0,R)} | u_{per,h}(x)|^2 dx &\leq  \int_{B_{\tilde{M}}\setminus B(0,R)\bigcup D'_{j,h}} | u_{per,h}(x)|^2 dx + \sum_{j}\int_{D'_{j,h}} | u_{per,h}(x)|^2 dx\nn\\
			&\leq \int_{B_{\tilde{M}}\setminus B(0,R)\bigcup D'_{j,h}} | u_{per,h}(x)|^2 dx + \sum_{j}C_0 \int_{A_{j,h}}  |u_{per,h}(x)|^2 dx\nn\\
			&\lesssim  \int_{B_{ \tilde{M}}\setminus B(0,R)\bigcup D'_{j,h}} | u_{per,h}(x)|^2 dx + C_0 \int_{\Gamma}  |u_{per,h}(x)|^2 dx
		\end{align}
    where $\Gamma$ is a set of disjoint pieces of rings outside $B_{\tilde{M}} \setminus B(0, R)$. By construction and periodicity of $u_{per,h}$, $\Gamma$ is included in $B(0, R)$.
    \item \textbf{Boundary case}:
		\begin{align}
			\int_{B(0,R_1)} | u_{per,h}(x)|^2 dx &\leq  \int_{B(0,R_1)\setminus \bigcup D'_{j,h}} | u_{per,h}(x)|^2 dx + \sum_{j}\int_{D'_{j,h}} | u_{per,h}(x)|^2 dx\nn\\
			&\leq \int_{B(0,R_1)\setminus \bigcup D'_{j,h}} | u_{per,h}(x)|^2 dx + \sum_{j}C_0 \int_{A_{j,h}}  |u_{per,h}(x)|^2 dx\nn\\
			&\lesssim  \int_{B(0,R_1)\setminus \bigcup D'_{j,h}} | u_{per,h}(x)|^2 dx +  \int_{\Gamma}  |u_{per,h}(x)|^2 dx
		\end{align}

   where $\Gamma$ is a set of disjoint pieces of rings outside $B(0, R_1)$. By construction, $\Gamma_S$ is included in $A(0, R_1, R_2)$.
\end{itemize}

\begin{proof}
See Appendix~\ref{subsect: AP_Carl_small}.
\end{proof}

\section{Conclusion}
\label{subsect: Conclusion}

In this section, we finish the proof by filling the holes and returning to Riemann surfaces $M$:
$u_{h}\in L^2(M,\R)$ is solution of $$(-h^2\Delta+V-E)u_{h}=0\quad \mbox{on } M$$ 
where $V\in L^\infty (M,\R)$ and $E\in I$ compact subset of $\R$. We search to control the $L^2$-norm of $u_h$ on $M$ by the $L^2$-norm of $u_h$ on $U$ an open subset of $M$.

In the section \ref{subsect: Uniformization} we have replace $u_{h}$ by $u_{per,h} \in L^2(B(0,R_0),\R)$ solution of $$(-h^2\Delta+V_{per}-E)u_{per,h}=0\quad \mbox{on } B(0,R_0)$$
with $V_{per} \in L^\infty (B(0,R_0),\R)$. Moreover $u_{per,h}$ is in certain sense periodic and all its values are obtained on a compact set $\tilde{M} \subset B(0,R_0)$ which represent $M$.  
We also concluded that there was two cases to consider:
\begin{itemize}
	\item \textbf{Interior case:} $U$ is replaced by $B(0, R)$  a small ball with radius $R$ centred at $0$ such that $B(0, R) \subset \tilde{M}$. In Section~\ref{subsect: Gag-Nir-Caccio}, we have obtain, after control of deformations cause by the mappings that we use in section \ref{sect: de SchrodAHarmonique}, an inequality for $u_{per,h}$ :
    
		\begin{multline}
			e^{-C' \frac{log((h\epsilon(h))^{-1})}{h\epsilon(h)}}\left(\int_{B_{\tilde{M}}\setminus B(0,R)\bigcup_j D'_{j,h}}  |u_{per,h}(z)|^2 dz +o_c(1)\|u_{per,h}\|_{L^2(B(0,R_0))}^2 \right) \\   
			\lesssim \int_{B(0,R)} \left|u_{per,h}(z)\right|^2dz+e^{-\tilde{C}' \frac{log((h\epsilon(h))^{-1})}{h\epsilon(h)}}\int_{B(0,R_0)} \left|u_{per,h}(z)\right|^2dz
		\end{multline}
    for $h$ small enough, where $\tilde{C}' > C' > 0$ and $\epsilon(h)=\frac{c}{\log(R_0/h)}$ with $c>0$ a small parameter. $D'_{j,h}$ are balls with radius $3 \times 32^2 h\epsilon(h)$ separated by a distance $\tilde{C}_1 h\epsilon(h)$ with $\tilde{C}_1 \gg 1$. $B_{\tilde{M}}$ is a ball containing $\tilde{M}$. In Section~\ref{subsect: Carleman_classique}, we have obtained an another control for $u_{per,h}$: 
		\begin{align}
			\int_{B_{\tilde{M}}\setminus B(0,R)} | u_{per,h}(x)|^2 dx &\lesssim \int_{B_{ \tilde{M}}\setminus B(0,R)\bigcup D'_{j,h}} | u_{per,h}(x)|^2 dx +\int_{\Gamma}  |u_{per,h}(x)|^2 dx
		\end{align}
    for $h$ small enough, where $\Gamma$ is a subset of $B(0, R)$.
    We can combine the two inequalities to get: 
    
    \begin{multline}
    	e^{-C' \frac{log((h\epsilon(h))^{-1})}{h\epsilon(h)}}\left(\int_{B_{\tilde{M}}}  |u_{per,h}(z)|^2 dz +o_c(1)\|u_{per,h}\|_{L^2(B(0,R_0))}^2\right) \\   
    	\lesssim 3\int_{B(0,R)} \left|u_{per,h}(z)\right|^2dz+e^{-\tilde{C}' \frac{log((h\epsilon(h))^{-1})}{h\epsilon(h)}}\int_{B(0,R_0)} \left|u_{per,h}(z)\right|^2dz
    \end{multline}
    for $h$ small enough.
    Thanks to the periodicity of $u_{per,h}$, we can reduce $B(0,R_0)$ to $B_{\tilde{M}}$. Moreover, $0<C'<\tilde{C}'$, so $$ -\tilde{C}' \frac{log((h\epsilon(h))^{-1})}{h\epsilon(h)}<-C' \frac{log((h\epsilon(h))^{-1})}{h\epsilon(h)},$$ and if we choose $c$ small enough, then we can absorb the integrals on $B(0,R_0)$ by the one on $B_{\tilde{M}}$. Thus we obtain:
    
    \begin{equation}
    	e^{-C' \frac{log((h\epsilon(h))^{-1})}{h\epsilon(h)}}\int_{B_{\tilde{M}}}  |u_{per,h}(z)|^2 dz  \\   
    	\lesssim 3\int_{B(0,R)} \left|u_{per,h}(z)\right|^2dz
    \end{equation}
for $h$ small enough.
Finally we can use the Cacciopoli lemma \ref{prop: Caccio} to recover the $L^2$-norm of the gradient:
\begin{equation}
	\int_{\tilde{M}} |h\nabla u_{per,h}|^2\lesssim \int_{B_{\tilde{M}}} |u_{per,h}|^2
\end{equation}
because $\tilde{M}\subset B_{\tilde{M}}$. Thus:
\begin{multline}
	e^{-C' \frac{log((h\epsilon(h))^{-1})}{h\epsilon(h)}}\int_{\tilde{M}} \Big( |u_{per,h}(z)|^2+|h\nabla u_{per,h}(z)|^2\Big) dz  \\   
	\lesssim 3\int_{B(0,R)} \Big( |u_{per,h}(z)|^2+|h\nabla u_{per,h}(z)|^2\Big)dz
\end{multline}
 By definition $\epsilon(h) = \frac{c}{\log(R_0/h)}$, so we obtain:
		\begin{multline}
			\int_{\tilde{M}} \Big( |u_{per,h}(z)|^2+|h\nabla u_{per,h}(z)|^2\Big)dz  \\
			\leq e^{C_0\log(1/h)^{2}/h}\int_{B(0,R)} \Big( |u_{per,h}(z)|^2+|h\nabla u_{per,h}(z)|^2\Big)dz
		\end{multline}
    with $C_0 > 0$. Finally, by definition of $\tilde{M}$, we reduce to $M$ and we obtain for $h$ small enough:
		\begin{equation}
			\int_{M} \Big(|u_{h}|^2+ |h\nabla u_{h}|^2 \Big)d\mu_g
			\leq e^{C_0\log(1/h)^{2}/h}\int_{U} \Big(|u_{h}|^2+ |h\nabla u_{h}|^2 \Big)d\mu_g
		\end{equation}
    with $C_0 > 0$ and $\mu_g$ the volume form on $(M,g)$ (see section \ref{subsect: Uniformization}).
\item \textbf{Boundary case:} $U$ is replaced by $ A(0, R, R_0)$  a ring centred at $0$ with $0 < R < R_0$.

 In Section~\ref{subsect: Gag-Nir-Caccio}, we have obtain, after control of deformations cause by the mappings that we use in section \ref{sect: de SchrodAHarmonique}, an inequality for $u_{per,h}$ :

\begin{multline}
	e^{-C'  \frac{log((h\epsilon(h))^{-1})}{h\epsilon(h)}}\left(\int_{B(0,R_1)\setminus \bigcup_j D'_{j,h}} |u_{per,h}(z)|^2 dz+ o_c(1) \|u_{per,h}\|_{L^2(B(0,R_2))}^2 \right) \\
	\lesssim \int_{A(0,R,R_0)} \left|u_{per,h}(z)\right|^2\left|\partial_z \psi_h\left(z\right)\right|^2dz
\end{multline}
for $h$ small enough, where $ C' > 0$, $R<R_1<R_2<R_0$ and $\epsilon(h)=\frac{c}{\log(R_0/h)}$ with $c>0$ a small parameter. $D'_{j,h}$ are balls with radius $3 \times 32^2 h\epsilon(h)$ separated by a distance $\tilde{C}_1 h\epsilon(h)$ with $\tilde{C}_1 \gg 1$.  In Section~\ref{subsect: Carleman_classique}, we have obtained an another control for $u_{per,h}$: 
\begin{align}
	\int_{B(0,R_1)} | u_{per,h}(x)|^2 dx \lesssim  \int_{B(0,R_1)\setminus \bigcup D'_{j,h}} | u_{per,h}(x)|^2 dx +  \int_{\Gamma}  |u_{per,h}(x)|^2 dx
\end{align}
for $h$ small enough, where $\Gamma$ is a subset of $A(0, R_1,R_2)$.
We can combine the two inequalities to get: 

\begin{multline}
	e^{-C'  \frac{log((h\epsilon(h))^{-1})}{h\epsilon(h)}}\left(\int_{B(0,R_0)} |u_{per,h}(z)|^2 dz+ o_c(1) \|u_{per,h}\|_{L^2(B(0,R_2))}^2 \right) \\
\lesssim 3\int_{A(0,R,R_0)} \left|u_{per,h}(z)\right|^2\left|\partial_z \psi_h\left(z\right)\right|^2dz
\end{multline}
for $h$ small enough.
If we choose $c$ small enough, then we can absorb the integrals on $B(0,R_2)$ by the one on $B(0,R_0)$. Thus we obtain:

\begin{equation}
	e^{-C' \frac{log((h\epsilon(h))^{-1})}{h\epsilon(h)}}\int_{B(0,R_0)}  |u_{per,h}(z)|^2 dz     
	\lesssim 3\int_{B(0,R)} \left|u_{per,h}(z)\right|^2dz
\end{equation}
for $h$ small enough. By definition $\epsilon(h) = \frac{c}{\log(R_0/h)}$ and $\tilde{M}\subset B_{\tilde{M}}$. So we obtain:
\begin{equation}
	\int_{B(0,R_0)} |u_{per,h}(x)|^2dx
	\leq e^{C_0\log(1/h)^{2}/h}\int_{A(0,R,R_0)} |u_{per,h}(x)|^2 dx
\end{equation}
with $C_0 > 0$.
We can then recover the $L^2$-norm on the gradient by using the Cacciopoli lemma \ref{prop: Caccio}:
\begin{equation}
	\int_{B(0,R)}|h\nabla u_{per,h}|^2\lesssim \int_{B(0,R_0)}|u_{per,h}|^2. 
\end{equation}
Thus:
\begin{multline}
		\int_{B(0,R_0)} \Big(|u_{per,h}(x)|^2+ |h\nabla u_{per,h}(x)|^2 \Big)dx\\
	\leq e^{C_0\log(1/h)^{2}/h}\int_{A(0,R,R_0)} \Big(|u_{per,h}(x)|^2+ |h\nabla u_{per,h}(x)|^2 \Big)dx
\end{multline}
 Finally, for this case, we reduce to $\S^2$ by the stereographic projection and we obtain for $h$ small enough:
\begin{equation}
	\int_{\S^2\setminus B(R/2)} \Big(|u_{h}|^2+ |h\nabla u_{h}|^2 \Big)d\sigma
	\leq e^{C_0\log(1/h)^{2}/h}\int_{B(R)\setminus B(R/2)} \Big(|u_{h}|^2+ |h\nabla u_{h}|^2 \Big)d\sigma
\end{equation}
with $C_0 > 0$, $B(R/2)\subset B(R)$ ball centred on the south pole of $\S^2$ inside $U$ and $\sigma$ the area measure on $\S^2$. Thus
\begin{equation}
	\int_{\S^2} \Big(|u_{h}|^2+ |h\nabla u_{h}|^2 \Big)d\sigma
	\leq e^{C_0\log(1/h)^{2}/h}\int_{U} \Big(|u_{h}|^2+ |h\nabla u_{h}|^2 \Big)d\sigma
\end{equation}
and we can reduce to the Riemannian surface $M$:
\begin{equation}
	\int_{M}\Big(|u_{h}|^2+ |h\nabla u_{h}|^2 \Big)d\mu_g
	\leq e^{C_0\log(1/h)^{2}/h}\int_{U} \Big(|u_{h}|^2+ |h\nabla u_{h}|^2 \Big)d\mu_g
\end{equation}
for $h$ small enough, with $\mu_g$ the volume form on $(M,g)$ (see \ref{subsect: Uniformization}).

\end{itemize}

\appendix
\section{Small Poincaré constant}
\label{subsect: small_PC}
We remind that $F_{0,h}$ is the zero set of $u_{per,h}$ inside $B(0,R_0)$ and $F_{1,h}$ is a maximal set of balls inside $B(0,R_0)$ with radius $h\epsilon(h)$ and separated from themselves, $F_{0,h}$ and $\partial B(0,R_0)$ by a distance larger than $Ch\epsilon(h)$ with $C>1$. 
	\begin{lem}
		For $\epsilon(h)$ small enough, the Poincaré constant of $\Omega_h:=B(0,R_0)\setminus F_{0,h}\cup F_{1,h}$ is smaller than $C'h^2\epsilon(h)^2$ where $C'$ is a constant only dependent of $C$.
	\end{lem}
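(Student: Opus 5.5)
We have to show $\int_{\Omega_h} w^2 \le C' h^2\epsilon(h)^2 \int_{\Omega_h}|\nabla w|^2$ for every $w\in W_0^{1,2}(\Omega_h)$. The plan is to localize to a grid of small squares, apply a local Poincaré inequality in each square that uses the forced vanishing of $w$, and then sum.

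\textbf{Setup and density.} Set $\rho:=h\epsilon(h)$. By density it suffices to treat $w\in\mathcal C_0^\infty(\Omega_h)$, extended by $0$ to $\R^2$. Then $w=0$ on $F_{1,h}$, on $F_{0,h}$, and outside $B(0,R_0)$. By the maximality of the construction, every point of $B(0,R_0)\setminus F_{0,h}$ lies within distance $2(C+1)\rho$ of one of the following:
\begin{itemize}
\item a center of a ball of $F_{1,h}$,
\item the zero set $F_{0,h}$,
\item the boundary $\partial B(0,R_0)$.
\end{itemize}

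\textbf{Local Poincaré inequality.} Cover $B(0,R_0)$ by the grid of closed squares $Q$ of side $L:=4(C+1)\rho$. Each $Q$ has finitely many neighbours (a number depending only on $C$), and I work on the enlarged square $Q^\ast:=3Q$. I claim that in each $Q^\ast$ the function $w$ vanishes on a "fat" set, namely one of the following:
\begin{itemize}
\item a full ball of radius $\rho$ from $F_{1,h}$, whose area is comparable to $|Q^\ast|$ with a constant depending only on $C$;
\item a whole segment or arc: either a piece of $\partial B(0,R_0)$ (outside which $w=0$), or a connected piece of $F_{0,h}$ crossing a fixed fraction of $Q^\ast$.
\end{itemize}
In each alternative the standard Poincaré inequality for functions vanishing on a set of positive capacity, rescaled to size $L$, gives
\[
\int_{Q} w^2\le \int_{Q^\ast} w^2 \le C'' L^2\int_{Q^\ast}|\nabla w|^2 .
\]
Concretely, in the ball case I use $\int_{Q^\ast}|w-w_B|^2\lesssim L^2\int_{Q^\ast}|\nabla w|^2$, with average $w_B=0$ over the hole, together with the ratio $|Q^\ast|/|B|$, which depends only on $C$.

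\textbf{Main obstacle: the nodal set.} The delicate case is $F_{0,h}$. It is only a closed set, so vanishing there needs a capacity or size lower bound. I would use that $u_{per,h}$ is $\mathcal C^1$ (Lemma~\ref{lem: C1_regularity_solution_Schrodinger}) and that it changes sign across $F_{0,h}$. Each nodal domain $\mathcal N$ is a component of $\{u_{per,h}\neq 0\}$. By Lemma~\ref{lem: positivity_radius}, $\mathcal N$ cannot contain a ball of radius $r_0h$ surrounded by $F_{0,h}$ alone, which rules out tiny isolated pieces. Moreover, a point $x$ near $F_{0,h}$ whose nearest zero is at distance $d\le 2(C+1)\rho$ has, inside $B(x,2d)$, a connected zero piece of diameter $\gtrsim d$. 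This holds because the nodal boundary of a domain contained in no small ball has no small components. Hence the capacity of $F_{0,h}\cap Q^\ast$ relative to $Q^\ast$ is bounded below by a constant independent of $h$. This gives the local inequality above, where I use $\rho\ll r_0h$, i.e.\ $\epsilon(h)$ small. If this topological argument fails, I would fall back to defining $\Omega_h$ nodal-domain by nodal-domain and adding holes of $F_{1,h}$ arbitrarily close to $F_{0,h}$.

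\textbf{Summation.} Sum the local inequality over all squares $Q$. Since the enlarged squares $Q^\ast$ have bounded overlap, this yields
\[
\int_{\Omega_h} w^2\le C' (C+1)^2\rho^2\int_{\Omega_h}|\nabla w|^2 ,
\]
which is the claimed bound with $C'$ depending only on $C$.
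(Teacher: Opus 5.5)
\textbf{There is a gap in the nodal-set step.} Your overall architecture is fine: squares $Q^\ast$ of size $\sim(C+1)\rho$ with bounded overlap, a local Poincaré inequality on each, then summation. The two alternatives where $Q^\ast$ contains a ball of $F_{1,h}$ or a region outside $B(0,R_0)$ are also handled correctly. The gap is the nodal-set alternative, which is the heart of the lemma. A closed zero set could a priori be a union of points, and points have zero capacity in the plane.

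What you need is that $F_{0,h}\cap Q^\ast$ has capacity relative to $Q^\ast$ bounded below uniformly. That means zero pieces of size comparable to $\rho$, the scale of the square. What you assert instead is a connected zero piece of diameter $\gtrsim d$, where $d$ is the distance from $x$ to its nearest zero. Since $d$ can be arbitrarily small, this gives nothing uniform. A continuum of diameter $\ell$ in a square of side $L$ has relative capacity of order $1/\log(L/\ell)$, so your local constant would degenerate like $L^2\log(L/d)$, and your ``Hence'' does not follow.

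The supporting claim that nodal boundaries have no small components is also not what Lemma~\ref{lem: positivity_radius} says, since that lemma is a statement about round balls. Deriving it would need a Zoretti-type planar-topology argument plus a version of the lemma on non-round domains. Your sentence about nodal domains not containing a ball ``surrounded by $F_{0,h}$ alone'' misreads the lemma. Finally, the fallback of redefining $\Omega_h$ would prove a different statement.

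\textbf{The fix is a short consequence of Lemma~\ref{lem: positivity_radius}, and it is what the paper uses.} Take $z\in F_{0,h}$ and $0<r<h\epsilon(h)$; note $h\epsilon(h)<r_0h$ once $\epsilon(h)<r_0$. Then the circle $C(z,r)$ meets $F_{0,h}$. Otherwise $u_{per,h}$ has one sign on the connected circle, and the lemma forces $u_{per,h}(z)\neq 0$, a contradiction.

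Hence $w$ vanishes at some point of each such circle, so $\max_{C(z,r)}|w|\le\int_{C(z,r)}|\nabla w|$. Integrating in $r$ gives $\int_{B(z,\rho)}w^2\le 4\pi^2\rho^2\int_{B(z,\rho)}|\nabla w|^2$. Your choice of $L$ puts $B=B(z,\rho)$ inside $Q^\ast$. The splitting $\int_{Q^\ast}w^2\le 2\int_{Q^\ast}|w-w_B|^2+2(|Q^\ast|/|B|)\int_B w^2$ then closes your local estimate, with a constant depending only on $C$.

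The paper runs this same circle argument directly around every $z\in F_{0,h}\cup F_{1,h}$. It then extends the bound along rays to $B(z,3Ch\epsilon(h))$ and concludes with a bounded-overlap covering. This route needs no capacity theory and no connectedness properties of nodal sets.
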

	
	\begin{proof}
		Let $f \in \mathcal{C}_0^\infty(\Omega_h)$. We extend $f$ by $0$ on $B(0,R_0)$. Let $z \in  F_{0,h}\cup F_{1,h} $. For $\epsilon(h)<r_0$ with $r_0$
define in the lemma \ref{lem: positivity_radius}, we know that for all $0<r<h\epsilon(h)$, there exist $z_1 \in C(z,r)\cap ( F_{0,h}\cup F_{1,h})$. So
		
		\begin{equation}
			\max_{C(z,r)} |f|\leq \int_{C(z,r)}|\nabla f|.
		\end{equation}

		\begin{align}
			&\int_{B(z,h\epsilon(h))} |f|^2= \int_0^1 \left(\int_{C(z,r)}|f|^2\right) dr\leq \int_0^1 |C(z,r)|\max_{C(z,r)} |f|^2\leq \int_0^1 |C(z,r)|\left(\int_{C(z,r)}|\nabla f|\right)^2\nn\\
			&\leq \int_0^1 |C(z,r)|^2\left(\int_{C(z,r)}|\nabla f|^2\right)\leq C_1h^2\epsilon(h)^2 \int_{B(z,h\epsilon(h))}|\nabla f|^2
		\end{align}
		
		We therefore can find $r\in (h\epsilon(h)/2,h\epsilon(h))$ such that:
		
		\begin{equation}
			\int_{C(z,r)}|f|^2\leq \frac{C_2}{h\epsilon(h)} \int_{B(z,h\epsilon(h))} |f|^2\leq C_3h\epsilon(h) \int_{B(z,h\epsilon(h))}|\nabla f|^2.
		\end{equation}
		
		Let be $\Gamma_\theta$ the segment starting at the point $x_\theta=z+re^{i\theta}$ and ending at the point $z+3Ch\epsilon(h)e^{i\theta}$.
		We have:
		
		\begin{align}
			&\max_{x\in \Gamma_\theta} |f|^2\leq\left(|f(x_\theta)|+\int_{\Gamma_\theta} |\nabla f| \right)^2\leq 2 \left(|f(x_\theta)|^2+\left(\int_{\Gamma_\theta} |\nabla f|\right)^2 \right)\nn\\
			&\leq  2 \left(|f(x_\theta)|^2+|\Gamma_\theta|\int_{\Gamma_\theta} |\nabla f|^2 \right)\leq 2 \left(|f(x_\theta)|^2+C_4 h\epsilon(h)\int_{\Gamma_\theta} |\nabla f|^2 \right)
		\end{align}
		
		So
		
		\begin{align}
			&\int_{B(z,3Ch\epsilon(h))} |f|^2= \int_{B(z,h\epsilon(h))} |f|^2+\int_{B(z,3Ch\epsilon(h))\setminus B(z,h\epsilon(h))} |f|^2 \nn\\
			&\leq C_1h^2\epsilon(h)^2 \int_{B(z,h\epsilon(h))}|\nabla f|^2 +|\Gamma_\theta|\int_0^{2\pi}\max_{x\in \Gamma_\theta} |f|^2\nn\\
			&\leq C_1h^2\epsilon(h)^2 \int_{B(z,h\epsilon(h))}|\nabla f|^2 +2|\Gamma_\theta| \int_0^{2\pi}\left(|f(x_\theta)|^2+C_4 h\epsilon(h)\int_{\Gamma_\theta} |\nabla f|^2 \right)\nn\\
			&\leq C_5 h^2\epsilon(h)^2 \int_{B(z,3Ch\epsilon(h))} |\nabla f|^2
		\end{align}
		
		Because $B(0,R_0)$ is compact and $F_{0,h}\cup F_{1,h}$ is $3Ch\epsilon(h)$ dense in $B(0,R_0)$, we can choose a finite collection of point in $F_{0,h}\cup F_{1,h}$ such that $B(z,3Ch\epsilon(h))$ cover $B(0,R_0)$ and each point is covered a bounded number of times. 
	\end{proof}
\section{The Toy problem}
\label{subsect: Toy problem}
\begin{thm}
	Let be $((D_{j,h})_j)_h$ a family ball collection on the plan, $32C_1 h\epsilon(h)$ separated (with $C_1\gg 100$), with radius $32h\epsilon(h)$. Let be $\tilde{R}_0\gg1$.  
	\begin{itemize}
		\item \textbf{Interior case:}\\
		Let be $0<\tilde{R}<\tilde{R}_0$,
		there is $\tilde{C}'>C'>0$, such that for $h$ small enough, for any family $(\tilde{f}_h)_h$ of harmonics functions on $B(0,\tilde{R}_0)\setminus \bigcup_j D_{j,h}$  such that $\tilde{f}_h$ does not change sign in each $B(0,\tilde{R}_0)\cap 5D_{j,h}\setminus D_{j,h}$:
		
		\begin{align}
			&\int_{B(0,\tilde{R})\setminus \bigcup_j(3D_{j,h})} \tilde{f}_h^2+e^{-\tilde{C}'\frac{\log \left((h\epsilon(h))^{-1}\right)}{h\epsilon(h)}}\int_{B(0,\tilde{R}_0)\setminus B(0,\frac{1}{2}\tilde{R}_0)\bigcup_j (3D_{j,h})} \tilde{f}_h^2 \nn\\
			&\geq e^{-C'\frac{log\left((h\epsilon(h))^{-1}\right)}{h\epsilon(h)}}\int_{B(0,\frac{1}{2}\tilde{R}_0)\setminus B(0,\tilde{R})\bigcup_j(3D_{j,h})} \tilde{f}_h^2 .
		\end{align}
		\hfill\\
		\item \textbf{Boundary case:}\\
		Let be $0<\tilde{R}_1<\tilde{R}_2<\tilde{R}_0$, there is $C'>0$, such that for $h$ small enough, for any family $(\tilde{f}_h)_h$ of harmonics functions on $B(0,\tilde{R}_0)\setminus \bigcup_j D_{j,h}$ such that $\tilde{f}_h$ does not change sign in each $B(0,\tilde{R}_0)\cap 5D_{j,h}\setminus D_{j,h}$:
		\begin{equation}
			\int_{B(0,\tilde{R}_2)\setminus B(0,\tilde{R}_1)\bigcup_j(3D_{j,h})} \tilde{f}_h^2\geq e^{-C'\frac{\log \left((h\epsilon(h))^{-1}\right)}{h\epsilon(h)}}\int_{B(0,\tilde{R}_1)\setminus \bigcup_j(3D_{j,h})} \tilde{f}_h^2. 
		\end{equation}
	\end{itemize}
\end{thm}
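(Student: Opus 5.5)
The natural route is to rescale and reduce to the toy problem of Logunov--Malinnikova--Nadirashvili--Nazarov (Section 6.1 of \cite{logunov2020landisconjectureexponentialdecay}). Set $\rho_h := 32h\epsilon(h)$ and define $F_h(x) := \tilde f_h(\rho_h x)$. Then $F_h$ is harmonic on $B(0,\tilde R_0/\rho_h)\setminus\bigcup_j \rho_h^{-1}D_{j,h}$. The rescaled holes $\rho_h^{-1}D_{j,h}$ are unit discs, mutually separated by $C_1>100$, and $F_h$ does not change sign on $5D\setminus D$ near each hole. The working scale becomes $R' \simeq \rho_h^{-1}$, so the known bound $e^{-CR'\log R'}$ turns into $e^{-C\log((h\epsilon(h))^{-1})/(h\epsilon(h))}$. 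This is exactly the weight in the statement, and $L^2$ norms only change by the harmless factor $\rho_h^2$.

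The proof of the toy problem has three ingredients, which I would redo on the rescaled domain.

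\begin{enumerate}
\item \textbf{Harmonic measure / sign condition.} Near each hole, the positivity of $F_h$ on $5D\setminus D$ gives a Harnack-type estimate. This lets one replace the values of $F_h$ on $3D_j$ by values on the ring $5D_j\setminus 3D_j$, losing only a constant factor per hole.

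\item \textbf{Filling the holes.} One extends $F_h$ across the holes, or compares it with the harmonic function obtained by solving a Dirichlet problem. The resulting error is localized near the holes and is controlled by step 1.

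\item \textbf{Propagation of smallness.} One applies a Carleman or three-balls (Hadamard) estimate for harmonic functions across the annuli of radii between $\tilde R/\rho_h$ and $\tilde R_0/(2\rho_h)$. There are $O(\rho_h^{-1})$ unit steps, and each step is adapted to avoid holes at cost $O(\log R')$. This produces the factor $e^{-C'R'\log R'}$.
\end{enumerate}

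\textbf{Interior case.} Here one propagates from the small ball $B(0,\tilde R)$ outward. The three-balls inequality needs an a priori bound on a larger region, which is why the outer annulus $B(0,\tilde R_0)\setminus B(0,\tilde R_0/2)$ appears with the even smaller weight $e^{-\tilde C'\cdots}$. I would obtain this by the usual optimization in the Hadamard interpolation exponent: writing the three-ball inequality as $M_{\text{mid}}\le M_{\text{in}}^{\alpha}M_{\text{out}}^{1-\alpha}$ and applying Young's inequality with a parameter yields $M_{\text{mid}} \le e^{C'\Lambda} M_{\text{in}} + e^{-(\tilde C'-C')\Lambda}M_{\text{out}}$. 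Here $\Lambda = \log(\rho_h^{-1})/\rho_h$, and $\tilde C'>C'$ is arranged by choosing the parameter. Multiplying through by $e^{-C'\Lambda}$ gives the stated form.

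\textbf{Boundary case.} Here one propagates inward from the annulus $B(0,\tilde R_2)\setminus B(0,\tilde R_1)$ to $B(0,\tilde R_1)$. This is the classical direction: it follows from the maximum principle combined with the chain of balls, or from a Carleman weight decreasing in $|x|$. No outer error term is needed, because the whole region inside is controlled.

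The main obstacle is step 3 in the presence of holes. Standard three-balls inequalities fail across a hole, and the sign condition is what replaces them. I would first try to invoke Theorem 6.1 of \cite{logunov2020landisconjectureexponentialdecay} directly at scale $R'=\tilde R_0/\rho_h$, checking that their constants depend only on the separation $C_1$ and the unit radius, and not on $R'$. If a statement with an annulus $B(0,\tilde R)$ of macroscopic inner radius is not directly available there, I would cover the annulus by a chain of $O(R')$ unit-scale discs and iterate their local estimate, with the per-step cost $e^{C\log R'}$ coming from their harmonic-measure bound near holes.
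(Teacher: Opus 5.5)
Your rescaling is consistent with the paper: at scale $\rho_h\simeq h\epsilon(h)$ the factor $e^{-C\log((h\epsilon(h))^{-1})/(h\epsilon(h))}$ is the LMNN factor $e^{-CR'\log R'}$. In the boundary case with $\tilde R_1\le \tilde R_2/2$, a direct appeal to the LMNN toy theorem at scale $R'=2\tilde R_1/\rho_h$ is legitimate. But that theorem is an \emph{inward} estimate, so it says nothing about the interior case, which is an outward three-region (three-ball type) inequality with holes. The mechanism you propose in step 3 to supply that inequality does not work. Local propagation of smallness for harmonic functions is of H\"older type $M_{i+1}\le M_i^{\alpha}M^{1-\alpha}$, so after a chain of $N\simeq R'$ unit discs the exponent is $\alpha^{N}$. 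Your Young-inequality step then only gives $\tilde C'-C'\lesssim C'\alpha^{N}\to 0$, not fixed constants $\tilde C'>C'$. A linear per-step bound $\|u\|_{B_{i+1}}\le e^{C\log R'}\|u\|_{B_i}$ is simply false without a frequency bound (consider $\mathrm{Re}\,z^{n}$ with $n$ arbitrary). Likewise, in the boundary case the maximum principle gives nothing once holes are present, since the values on $\partial D_{j,h}$ are not controlled. And \emph{filling the holes} by a Dirichlet problem is not what closes the argument.

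The missing idea, which is the paper's proof (following LMNN, Section 6.1), is a single global Carleman estimate with weight $e^{2k\omega}$, where $|\nabla\omega|$ is bounded below. It is applied to $\eta_h\tilde f_h$, with $\eta_h=0$ on $2D_{j,h}$ and $\eta_h=1$ away from the $3D_{j,h}$ and from thin inner and outer boundary layers; harmonicity leaves only cut-off residues. Your step 1 is the right local input: Harnack gives $|\tilde f_h|\simeq a_{j,h}$ and $|\nabla\tilde f_h|\lesssim a_{j,h}(h\epsilon(h))^{-1}$ on $4D_{j,h}\setminus 2D_{j,h}$. But a constant-factor comparison cannot absorb the hole residues, because the Carleman left-hand side carries the small factor $C_0/k$. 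What absorbs them is that $e^{2k\omega}$ is larger by $e^{\tilde Ckh\epsilon(h)}$ on a ball of radius $\simeq h\epsilon(h)$ in $4D_{j,h}\setminus 3D_{j,h}$ than on $3D_{j,h}\setminus 2D_{j,h}$. Absorption requires $e^{-\tilde Ckh\epsilon(h)}\ll C_0/k$, which forces $k\simeq \log((h\epsilon(h))^{-1})/(h\epsilon(h))$; this, not a per-step cost in a chain, is where the logarithm comes from. Comparing $e^{2k\omega}$ on the remaining regions then yields the exponential factors of the statement. The choice of weight matters. The paper takes $\omega=|x|^{-1}$ on $B(0,\tilde R_0)\setminus B(0,\tilde R/2)$ in the interior case, where $\tilde C'>C'$ comes from $\omega(\tilde R_0/2)>\omega(3\tilde R_0/4)$, and the linear weight $\omega=x_1$ in the boundary case. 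A weight decreasing in $|x|$, as you suggest for the boundary case, has a critical point at $0\in B(0,\tilde R_1)$. Holes near the origin then gain nothing from the weight, so their residues cannot be absorbed.
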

	
	\begin{proof}
		For the proof, we suppose that balls $D_{j,h}$ have radius of $h\epsilon(h)$ (we just do a dilatation by constant factor). So they are separated by a distance larger than $100h\epsilon(h)$. 
		
		Let us start by showing this Carleman inequality:
		
		\begin{lem}(Carleman inequality)\\
			\label{lem: Carleman classique}
			\begin{itemize}
				\item \textbf{Interior case:}\\
				There is $C_0>0$, such that for $h$ small enough, for all $u \in \mathcal{C}_0^\infty(B(0,\tilde{R}_0)\setminus \frac{1}{2}B(0,\tilde{R}))$, we have:
				\begin{equation}
					C_0 k(h)^{-1} \int_{B(0,\tilde{R}_0)} u(x)^2 e^{2k(h)\omega(|x|)}dx\leq \int_{B(0,\tilde{R}_0)} |k(h)^{-2}\Delta u(x)|^2 e^{2k(h)\omega(|x|)}dx
				\end{equation}
				where $\omega(|x|)=|x|^{-1}$, and $k(h)$ a positive function of $h$ which is supposed to increase.
				\hfill\\
				\item \textbf{Boundary case:}\\
				There is $C_0>0$, such that for $h$ small enough, for all $u \in \mathcal{C}_0^\infty(B(0,\tilde{R}_0))$, we have:
				\begin{equation}
					C_0 k(h)^{-2}\int_{B(0,\tilde{R}_0)} u(x)^2 e^{2k(h)\omega(x)}dx\leq  \int_{B(0,\tilde{R}_0)} |k(h)^{-2}\Delta u(x)|^2 e^{2k(h)\omega(x)}dx
				\end{equation}
				where $\omega(x)=x_1$, and $k(h)$ a positive function of $h$ which is supposed to increase.
			\end{itemize}
		\end{lem}
		
		\begin{proof}
			\hfill\\

			\begin{itemize}
				\item \textbf{Interior case:}
				
				To begin with, we rewrite $k(h)^{-2}\Delta$ in polar form:
				
				\begin{equation}
					k(h)^{-2}\Delta= k(h)^{-2} \left[\partial_r^2 +\frac{1}{r}\partial_r +\frac{1}{r^2}\Delta_{S^{1}} \right].
				\end{equation}
				
				To obtain the Carleman inequality we will do some symbolic algebra, for that we quantify in $k(h)^{-1}$.

				Because $\omega$ is radial, we only need to look on radials symbols.
				The radial principal symbol of $k(h)^{-2}\Delta$ is:
				
				\begin{equation}
					p(r,\sigma)= \sigma^2 + \frac{1}{r^2}(\Lambda^*)^2
				\end{equation}
				
				Where $\Lambda^*$ is the principal symbol of $\Delta_{S^{1}}$.
				So by composition with $e^{\pm k(h)\omega(r)}$, the radial principal symbol of $e^{k(h)\omega(r)}(k(h)^{-2}\Delta)e^{-k(h)\omega(r)}$ is:
				
				\begin{equation}
					p_{\omega}(r,\sigma)=p(r,\sigma +i\partial_r \omega(r))= (\sigma+i\partial_r\beta)^2+\frac{(\Lambda^*)^2}{r^2}.
				\end{equation}
			
				Because of $\partial_r\omega\neq 0$ on $B(0,\tilde{R}_0)\setminus \frac{1}{2}B(0,\tilde{R})$, the principal symbol cancels when: 
				
					$$\sigma=0; \quad (\partial_r\omega(r))^2=\frac{(\Lambda^*)^2}{r^2}.$$
			
				We want the Poisson bracket $\{Re\, p_{\omega}, Im\, p_{\omega}\}$ to be positive when $p_{\omega}(r,\sigma)=0$.
				
				\begin{align}
					\{Re\, p_{\omega}, Im\, p_{\omega}\}=& 4\left[\sigma^2\partial_r^2\phi_h +\left(\partial_r^2 \phi_h \partial_r \phi_h +\frac{(\Lambda^*)^2}{r^3} \right)\partial_r \phi_h \right],\nn\\
				\end{align}
				
				so when $p_{\omega}(r,\sigma)=0$, the Poisson bracket reduces to:
				
				\begin{equation}
					\{Re\, p_{\omega}, Im\, p_{\omega}\}= 4\left(\partial_r^2 \omega(r) \partial_r \omega(r) +\frac{(\partial_r \omega(r))^2}{r} \right)\partial_r \omega(r). 
				\end{equation}
				
				If we take $\omega(r)= r^{-1}$ with $r\in [\tilde{R}/2,\tilde{R}_0]$, we have:
				
				\begin{equation}
					\{Re\, p_{\omega}, Im\, p_{\omega}\}=4 r^{-7}>4 R_0^{-7},
				\end{equation}
				
				so the Poisson bracket is positive.
				Moreover $|p_{\omega}|^2 \simeq \langle \sigma \rangle^4=(1+|\sigma|^2)^2$ when $\sigma$ is large. Thus on $B(0,\tilde{R}_0)\setminus \frac{1}{2}B(0,\tilde{R})$, we can find $\tilde{C}>0$ and $d>0$ such that, for $h$ small enough :
				
				\begin{equation}
					d|p_{\omega}|^2+ \{Re\, p_{\omega}, Im\, p_{\omega}\} \geq \tilde{C}\langle \sigma \rangle^4.
				\end{equation}
				
				By Garding inequality (see for example \cite{alinhac2007pseudo}), we obtain for $h$ small enough:
				
				\begin{align}
					& k(h)\Biggl\langle \left[\left(e^{k(h)\omega(r)}(k(h)^{-2}\Delta)e^{-k(h)\omega(r)}\right)^*,e^{k(h)\omega(r)}(k(h)^{-2}\Delta)e^{-k(h)\omega(r)}   \right]v,v \Biggr\rangle \nn\\
					&+d\bigg\|e^{k(h)\omega(r)}(k(h)^{-2}\Delta)e^{-k(h)\omega(r)}v\bigg\|_{L^2}^2\geq \frac{\tilde{C}}{2}\|v \big\|_{H^2}^2
				\end{align}
				
				for $v \in \mathcal{C}_0^{\infty}(B(0,R_0)\setminus \frac{1}{2}B(R'))$. 
				Furthermore:
				
				\begin{align}
					&k(h) \Biggl\langle \left[\left(e^{k(h)\omega(r)}(k(h)^{-2}\Delta)e^{-k(h)\omega(r)}\right)^*,e^{k(h)\omega(r)}(k(h)^{-2}\Delta)e^{-k(h)\omega(r)}   \right]v,v \Biggr\rangle\nn\\
					&= k(h)\Bigg\|e^{k(h)\omega(r)}(k(h)^{-2}\Delta)e^{-k(h)\omega(r)}v\Bigg\|_{L^2}^2- k(h)\Bigg\|\left(e^{k(h)\omega(r)}(k(h)^{-2}\Delta)e^{-k(h)\omega(r)}\right)^*v\Bigg\|_{L^2}^2\nn\\
					&\leq  k(h)\Bigg\|e^{k(h)\omega(r)}(k(h)^{-2}\Delta)e^{-k(h)\omega(r)}v\Bigg\|_{L^2}^2.
				\end{align}
				
				So by replacing the $H^2$ norm by an $L^2$ norm, we obtain:
				
				\begin{equation}
					\Big\|e^{k(h)\omega(r)}(k(h)^{-2}\Delta)e^{-k(h)\omega(r)}v \Big\|_{L^2}^2\geq \frac{ C_0}{k(h)}\big\|v \big\|_{L^2}^2
				\end{equation}

				for $v \in \mathcal{C}_0^{\infty}(B(0,\tilde{R}_0)\setminus \frac{1}{2} B(0,\tilde{R}))$.
				Thus, with $v=u\,e^{k(h)\omega(|x|)}$ we proved the first case.
				\hfill\\
				\item\textbf{Boundary case:}\\
				For this case we follow the proof of \cite{logunov2020landisconjectureexponentialdecay}, section $6.1$:
				
				Let be $u \in \mathcal{C}_0^\infty (B(0,\tilde{R}_0))$ and let be $v=u\,e^{k(h)x_1}$, then 
				
				\begin{equation}
					e^{k(h)x_1}\Delta u= \Delta v -k(h)\partial_{x_1}v +\frac{k(h)^2}{4}v.
				\end{equation}
				
				\begin{align}
					\int_{B(0,\tilde{R}_0)} |\Delta u|^2e^{2k(h)x_1}&=\int_{B(0,\tilde{R}_0)} |\Delta v +\frac{k(h)^2}{4}v|^2+\int_{B(0,\tilde{R}_0)} |k(h)\partial_{x_1}v|^2\nn\\
					&-2\int_{B(0,\tilde{R}_0)} (\Delta v +\frac{k(h)^2}{4}v)k(h)\partial_{x_1}v
				\end{align}
				
				Note that $2v\partial_{x_1}v= \partial_{x_1}(v^2)$ and by integrating by parts:
				\begin{equation}
					-\int_{B(0,\tilde{R}_0)} \partial_{x_1}v \Delta v= -\int_{B(0,\tilde{R}_0)} \Delta v \partial_{x_1}v=0.
				\end{equation}
				
				Hence
				\begin{equation}
					\int_{B(0,\tilde{R}_0)} |\Delta u|^2e^{2k(h)x_1}=\int_{B(0,\tilde{R}_0)} |\Delta v +\frac{k(h)^2}{4}v|^2+\int_{B(0,\tilde{R}_0)} |k(h)\partial_{x_1}v|^2\geq \int_{B(0,\tilde{R}_0)} |k(h)\partial_{x_1}v|^2,
				\end{equation}
				
				and by Poincaré’s inequality
				\begin{equation}
					\int_{B(0,\tilde{R}_0)} |\Delta u|^2e^{2k(h)x_1}\geq C_0 k(h)^2 \int_{B(0,\tilde{R}_0)} u^2 e^{2k(h)x_1}.
				\end{equation}
				
				Thus
				
				\begin{equation}
					\int_{B(0,\tilde{R}_0)} |k(h)^{-2}\Delta u|^2e^{2k(h)x_1}\geq C_0 k(h)^{-2} \int_{B(0,\tilde{R}_0)} u^2 e^{2k(h)x_1}
				\end{equation}
				
				(We could have done a symbolic proof, we would have gain $k(h)^{-1}$ in the right member, but this will not be useful).
			\end{itemize}
		\end{proof}

Thanks to this lemma, the proof of the theorem proceeds as followed: 
		\begin{itemize}
			\item \textbf{Interior case:}\\
			Let be $\eta_h \in \mathcal{C}_0^\infty(B(0,\tilde{R}_0)\setminus B(0,\frac{1}{2}\tilde{R}))$ such that:
			
			\begin{itemize}
				\item $\eta_h$ is non-negative,
				\item $\eta_h=0$ on $2D_{j,h}$, $B(0,\frac{2}{3}\tilde{R})$, and $\{x:|x|\geq \tilde{R}_0-11h\epsilon(h)\}$,
				\item $\eta_h=1$ on $B(0,\frac{3}{4}\tilde{R}_0)\setminus B(0,\frac{3}{4}\tilde{R})\bigcup_j(3D_{j,h})$,
				\item the function $\eta_h$ is bounded, its first derivatives are bounded by a factor $\lesssim{(h\epsilon(h))^{-1}}$ and its second derivatives  by a factor $\lesssim{(h\epsilon(h))^{-2}}$.
			\end{itemize}
			We apply the lemma \ref{lem: Carleman classique} to $u_h=\tilde{f}_h\,\eta_h$, we get:
			\begin{align}
				\label{eq:CarlLogsanscompact}
				&\int_{B(0,\tilde{R}_0)\setminus B(0,\frac{2}{3}\tilde{R})\bigcup_j(2D_{j,h})} |\eta_h k(h)^{-2}\Delta \tilde{f}_h(x)|^2 e^{2k(h)\omega(|x|)}dx + "\mbox{res}"\nn\\
				&\geq \frac{C_0}{k(h)}\int_{B(0,\frac{3}{4}\tilde{R}_0)\setminus B(0,\frac{3}{4}\tilde{R})\bigcup_j(3D_{j,h})} \tilde{f}_h(x)^2 e^{2k(h)\omega(|x|)}dx.
			\end{align}
Since $\tilde{f}_h$ is harmonic, all that remains are the residues which are integrals of the form:
			\begin{align}
				I&=\sum_{5D_{j,h}\subset B(0,\frac{3}{4}\tilde{R}_0)\setminus B(0,\frac{3}{4}\tilde{R})}\int_{3D_{j,h}\setminus2D_{j,h}} "\mbox{Cut-off res}"\\
				II&=\int_{B(0,\tilde{R}_0-11h\epsilon(h))\setminus B(0,\frac{3}{4}\tilde{R}_0-10h\epsilon(h))\bigcup_j(2D_{j,h})}"\mbox{Cut-off res}"\\
				III&= \int_{B(0,(\frac{3}{4}+10h\epsilon(h))\tilde{R})\setminus B(0,\frac{2}{3}\tilde{R})\bigcup_j(2D_{j,h})}"\mbox{Cut-off res}"
			\end{align}
			
			We suppose that $k(h)^{-1}$ decrease faster than $h\epsilon(h)$, so each “Cut-off res” is $\lesssim(\tilde{f}_h^2 +|k(h)^{-1} \nabla \tilde{f}_h|^2)e^{2k(h)\omega}$.
			
			 Note that if $5D_{j,h}\subset B(0,\frac{3}{4}\tilde{R}_0)\setminus B(0,\frac{3}{4}\tilde{R})$,  there is $\tilde{C}>0$ such that:
			
			\begin{equation}
				\int_{3D_{j,h}\setminus 2D_{j,h}}e^{2k(h)\omega(|x|)}dx\lesssim e^{-\tilde{C}\,k(h)h\epsilon(h)}\int_{4D_{j,h}\setminus 3D_{j,h}}e^{2k(h)\omega(|x|)}dx
			\end{equation}
			
			Because $4D_{j,h}\setminus 3D_{j,h}$ contains a ball $\tilde{B}$ with radius $h\epsilon(h)/4$ where
			$$ \inf_{\tilde{B}}e^{2k(h)\omega(|x|)}\geq e^{\tilde{C}\,k(h)h\epsilon(h)}\sup_{3D_{j,h}\setminus 2D_{j,h}}e^{2k(h)\omega(|x|)}$$
			(Figure~\ref{fig:ToyDemo}). 
			
			\begin{figure}
				\centering
				\includegraphics[width=0.6\textwidth]{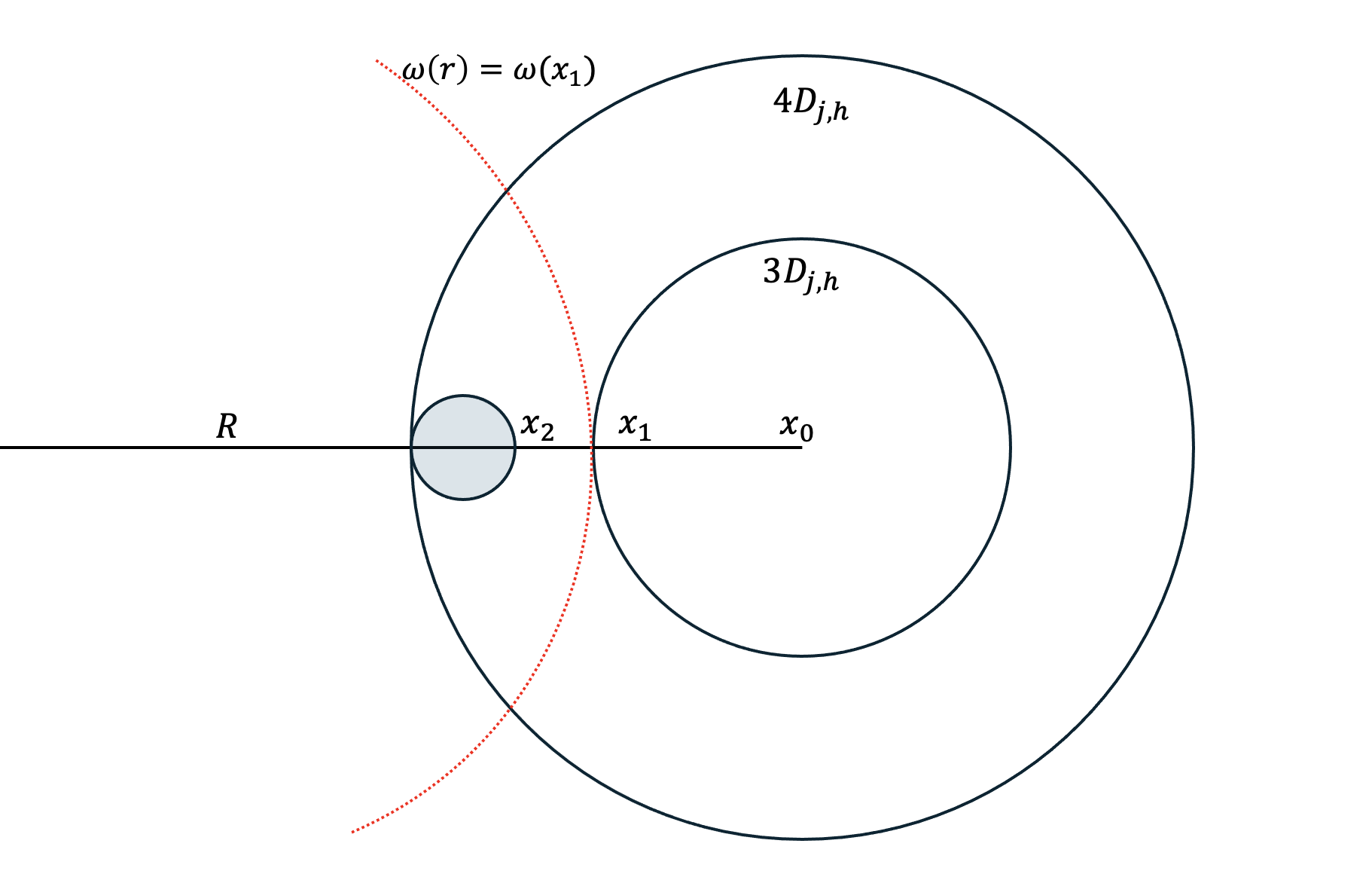}
				\caption{ On $\tilde{B}$ (the blue ball), $e^{2k(h)|x|^{-1}}$ is bigger than $e^{2k(h)|x_2|^{-1}}$, which is bigger than $e^{\tilde{C}\,k(h)h\epsilon(h)}e^{2k(h)|x_1|^{-1}}$ (where $|x_1|^{-1}=\sup_{3D_{j,h}} |x|^{-1}$) }
				\label{fig:ToyDemo}
			\end{figure}

			Assume that $5D_{j,h}\subset B(0,\frac{3}{4}\tilde{R}_0)\setminus B(0,\frac{3}{4}\tilde{R})$. Since $\tilde{f}_h$ does not change sign on $5D_{j,h}\setminus D_{j,h}$, by Harnack's inequality and Cauchy's estimation there exists a constant $a_{j,h}$ such that $|\tilde{f}_h|\simeq a_{j,h}$ , $|k(h)^{-1}\nabla \tilde{f}_h|\lesssim a_{j,h}(h\epsilon(h)k(h))^{-1}\lesssim a_{j,h}$ on $4D_{j,h}\setminus 2D_{j,h}$.
			So:

			\begin{align}
				\int_{3D_{j,h}\setminus 2D_{j,h}}"\mbox{Cut-off res}"&\lesssim  a_{j,h}^2\int_{3D_{j,h}\setminus 2D_{j,h}}e^{2k(h)\omega(|x|)}dx \nn\\
				&\lesssim a_{j,h}^2e^{-\tilde{C}\,k(h)h\epsilon(h)}\int_{4D_{j,h}\setminus 3D_{j,h}}e^{2k(h)\omega(|x|)}dx \nn\\
				&\lesssim e^{-\tilde{C}\,k(h)h\epsilon(h)}\int_{4D_{j,h}\setminus 3D_{j,h}}\tilde{f}_h(x)^2 e^{2k(h)\omega(|x|)}dx.
			\end{align}
			
			Thus, because all ball $D_{j,h}$ are $100h\epsilon(h)$-separated, we get
			
			\begin{equation}
				I\lesssim e^{-\tilde{C}\,k(h)h\epsilon(h)}\int_{B(0,\frac{3}{4}\tilde{R}_0)\setminus B(0, \frac{3}{4}\tilde{R})\bigcup_j(3D_{j,h})} \tilde{f}_h(x)^2e^{2k(h)\omega(|x|)}dx.
			\end{equation}
			
			Note then that 
			
			\begin{equation}
				\frac{C_0}{k(h)}\int_{B(0,\frac{3}{4}\tilde{R}_0)\setminus B(0,\frac{3}{4}\tilde{R})\bigcup_j(3D_{j,h})} \tilde{f}_h(x)^2 e^{2k(h)\omega(|x|)}dx>2I
			\end{equation}
			
			as soon as $\frac{C_0}{k(h)}>>e^{-\tilde{C}\,k(h)h\epsilon(h)}$. We choose then $k(h)=-c h^{-1}\epsilon(h)^{-1}\log(h\epsilon(h))$ with $c>1$ (Note that $k(h)^{1}$ decrease faster than $h\epsilon(h)$). 
			So with the estimate \ref{eq:CarlLogsanscompact}, we have:
			
			\begin{equation}
				2I \leq \frac{C_0}{k(h)}\int_{B(0,\frac{3}{4}\tilde{R}_0)\setminus  B(0,\frac{3}{4}\tilde{R})\bigcup_j(3D_{j,h})} \tilde{f}_h(x)^2 e^{2k(h)\omega(|x|)}dx \leq I + II+III.
			\end{equation}
			
			We deduce a minor bound for $III+II$:

			\begin{align}
				& III+II\geq  \frac{C_0}{2k(h)}\int_{B(0,\frac{3}{4}\tilde{R}_0)\setminus B(0, \frac{3}{4}\tilde{R})\bigcup_j(3D_{j,h})} \tilde{f}_h(x)^2 e^{2k(h)\omega(|x|)}dx\nn\\
				&\geq e^{2\omega(\frac{1}{2}\tilde{R}_0) k(h)}\int_{B(0,\frac{1}{2}\tilde{R}_0)\setminus  B(0,\frac{3}{4}\tilde{R})\bigcup_j(3D_{j,h})} \tilde{f}_h(x)^2dx.
			\end{align}
			
			Note now that if $5D_{j,h}\subset B(0,\tilde{R})$, then $\int_{3D_{j,h}\setminus 1,5D_{j,h}}\tilde{f}_h^2\simeq \int_{4D_{j,h}\setminus 3D_{j,h}}\tilde{f}_h^2$. So for $h$ small enough, by Cauchy estimate:
			
			\begin{align}
				\int_{B(0,\tilde{R})\setminus \bigcup_j(3D_{j,h})}& \tilde{f}_h(x)^2dx  \gtrsim \int_{B(\tilde{R}-10h\epsilon(h))\setminus  \bigcup_j(1,5D_{j,h})}\tilde{f}_h(x)^2dx\nn\\
				&\gtrsim \int_{(\frac{3}{4}+10h\epsilon(h))B(0,\tilde{R})\setminus \frac{2}{3}B(0,\tilde{R}) \bigcup_j(2D_{j,h})} \tilde{f}_h(x)^2+|k(h)^{-1}\nabla \tilde{f}_h(x)|^2dx.
			\end{align}
			
			Thus we obtain a bound for $III$:
			
			\begin{align}
				&\sup_{B(0,\tilde{R})\setminus B(0,\frac{2}{3}\tilde{R})}e^{2k(h)\omega(|x|)}\int_{B(0,\tilde{R})\setminus \bigcup_j(3D_{j,h}))} \tilde{f}_h(x)^2dx\nn\\
				&\gtrsim\int_{(\frac{3}{4}+10h\epsilon(h))B(0,\tilde{R})\setminus B(0,\frac{2}{3}\tilde{R}) \bigcup_j(2D_{j,h})} (\tilde{f}_h(x)^2+|k(h)^{-1}\nabla \tilde{f}_h(x)|^2)e^{2k(h)\omega}dx\nn\\
				&\gtrsim III.
			\end{align}
			
			In the same way, for $5D_{j,h}\subset B(0,\tilde{R}_0)$, $\int_{3D_{j,h}\setminus 1,5D_{j,h}}\tilde{f}_h^2\simeq \int_{4D_{j,h}\setminus 3D_{j,h}}\tilde{f}_h^2$. So for $h$ small enough, by Cauchy estimate:
			
			\begin{align}
				&\int_{B(0,\tilde{R}_0)\setminus B(0,\frac{1}{2}\tilde{R}_0)\bigcup_j 3D_{j,h}} \tilde{f}_h(x)^2dx\gtrsim\int_{B(0,\tilde{R}_0-10h\epsilon(h))\setminus B(0,\frac{1}{2}R_0)\bigcup_j 1,5D_{j,h}} \tilde{f}_h(x)^2dx\nn\\
				&\gtrsim \int_{B(0,\tilde{R}_0-11h\epsilon(h))\setminus B(0,\frac{3}{4}\tilde{R}_0-10h\epsilon(h))\bigcup_j 2D_{j,h}}\tilde{f}_h(x)^2 +|k(h)^{-1}\nabla \tilde{f}_h(x)|^2dx.
			\end{align}
			
			Thus we obtain for $II$:
			
			\begin{equation}
			\sup_{B(0,\tilde{R}_0)\setminus B(0,\frac{3}{4}\tilde{R}_0-10h\epsilon(h))}e^{2k(h)\omega(|x|)}	\int_{B(0,\tilde{R}_0)\setminus B(0,\frac{1}{2}R_0)\bigcup_j 3D_{j,h}} \tilde{f}_h(x)^2dx\gtrsim II.
			\end{equation}
			
			Therefore with $\omega(|x|)=|x|^{-1}$ for $h$ small enough
			
			\begin{align}
			e^{\frac{3k(h)}{\tilde{R}}}&	\int_{B(0,\tilde{R})\setminus \bigcup_j(3D_{j,h})} \tilde{f}_h(x)^2dx+e^{\frac{16k(h)}{5\tilde{R}_0}} \int_{B(0,\tilde{R}_0)\setminus B(0,\frac{1}{2}\tilde{R}_0)\bigcup_j 3D_{j,h}} \tilde{f}_h(x)^2dx\nn\\
			&\gtrsim e^{\frac{4k(h)}{\tilde{R}_0}} \int_{B(0,\frac{1}{2}\tilde{R}_0)\setminus  B(0,\frac{3}{4}\tilde{R})\bigcup_j(3D_{j,h})} \tilde{f}_h(x)^2dx .
			\end{align}
			
			that is to say:
			
			\begin{align}
				&\int_{B(0,\tilde{R})\setminus \bigcup_j(3D_{j,h})} \tilde{f}_h(x)^2dx+e^{-C_2 k(h)} \int_{B(0,\tilde{R}_0)\setminus B(0,\frac{1}{2}\tilde{R}_0)\bigcup_j 3D_{j,h}} \tilde{f}_h(x)^2dx\nn\\
				&\geq e^{-C_1 k(h)} \int_{B(0,\frac{1}{2}\tilde{R}_0)\setminus \frac{3}{4} B(0,\tilde{R})\bigcup_j(3D_{j,h})} \tilde{f}_h(x)^2dx 
			\end{align}
			with $0<C_1<C_2$.
			\hfill\\
			\item \textbf{Boundary case:}\\
			Let be $\eta_h \in \mathcal{C}_0^\infty(B(0,R_0))$ such that:
			\begin{itemize}
				\item $\eta_h$ is non-negative,
				\item $\eta_h=0$ on $2D_{j,h}$ and $\{x:|x|\geq \tilde{R}_2-11h\epsilon(h)\}$,
				\item $\eta_h=1$ on $B(0,\tilde{R}_1+20h\epsilon(h))\setminus \bigcup_j(3D_{j,h})$,
				\item the function $\eta_h$ is bounded its first derivative are bounded by a factor ${(h\epsilon(h))^{-1}}$ and its second derivative  by a factor ${(h\epsilon(h))^{-2}}$.
			\end{itemize}
			
			We apply the second point of the lemma \ref{lem: Carleman classique} to $u_h=\tilde{f}_h \eta_h$ and we get:			
			\begin{align}
				&\int_{B(0,\tilde{R}_2)\setminus \bigcup_j(2D_{j,h})} |\eta_h k(h)^{-2}\Delta \tilde{f}_h(x)|^2 e^{2k(h)\omega(x)}dx + "\mbox{res}"\nn\\
				&\geq \frac{C_0}{k(h)^2}\int_{B(0,\tilde{R}_1+20h\epsilon(h))\setminus \bigcup_j(3D_{j,h})} \tilde{f}_h(x)^2 e^{2k(h)\omega(x)}dx.
			\end{align}
			
Since $\tilde{f}_h$ is harmonic, it remains only $"res"$ in the left term. $"res"$ is integrals of the form :			
			\begin{align}
				I&=\sum_{5D_{j,h}\subset B(0,\tilde{R}_1+20h\epsilon(h))}\int_{3D_{j,h}\setminus 2D_{j,h}} "\mbox{Cut-off res}"\\
				II&=\int_{B(0,\tilde{R}_2-11h\epsilon(h))\setminus B(0,\tilde{R}_1+10h\epsilon(h))\bigcup_j 2D_{j,h}}"\mbox{Cut-off res}".\\
			\end{align}
We suppose that $k(h)^{-1}$ decrease faster than $h\epsilon(h)$ such that 
			 $$"\mbox{Cut-off res}" \lesssim(\tilde{f}_h^2 +|k(h)^{-1} \nabla \tilde{f}_h|^2)e^{2k(h)\omega}$$.
			
Note that there is $\tilde{C}>0$ such that:
			\begin{equation}
				\int_{3D_{j,h}\setminus 2D_{j,h}}e^{2k(h)\omega(x)}\lesssim e^{-\tilde{C}\,k(h)h\epsilon(h)}\int_{4D_{j,h}\setminus 3D_{j,h}}e^{2k(h)\omega(x)}.
			\end{equation}
			
			Because $4D_{j,h}\setminus 3D_{j,h}$ contains a ball $\tilde{B}$ with radius $h\epsilon(h)/4$ where
			$$\sup_{\tilde{B}} e^{2k(h)x_1} \geq e^{\tilde{C}\,k(h)h\epsilon(h)}\sup_{3D_{j,h}\setminus 2D_{j,h}}e^{2k(h)x_1} $$
			(Figure~\ref{fig:ToyDemo2}). 
			
			\begin{figure}
				\centering 
				\includegraphics[width=0.4\textwidth]{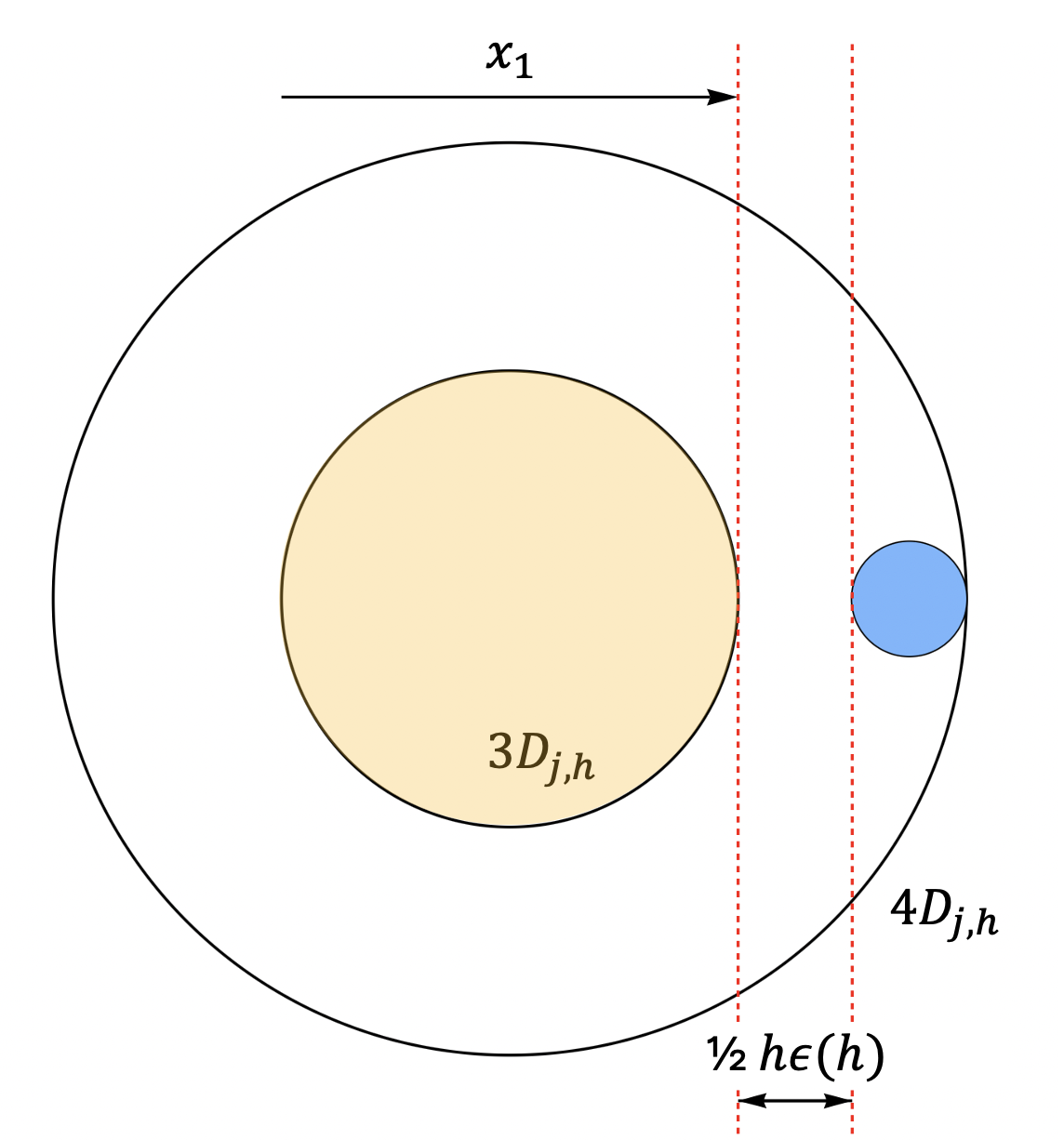}
				\caption{ On $\tilde{B}$ (blue ball), $e^{2k(h)x_1}$ is bigger than $e^{2k(h)x_1}$ on  $3D_{j,h}$ (orange ball) by a factor $e^{\frac{h\epsilon(h)k(h)}{2}}$.}
				\label{fig:ToyDemo2}
			\end{figure}
			
			Assume that $5D_{j,h}\subset B(0,\tilde{R}_1+20h\epsilon(h))$. Since $\tilde{f}_h$ does not change sign on $5D_{j,h}\setminus D_{j,h}$, by Harnack's inequality and Cauchy's estimation there exists a constant $a_{j,h}$ such that, for $h$ small enough, $|\tilde{f}_h|\simeq a_{j,h}$ and $|k(h)\nabla \tilde{f}_h|\lesssim a_{j,h}(h\epsilon(h)k(h))^{-1}\lesssim a_{j,h}$ on $3D_{j,h}\setminus 2D_{j,h}$.
			So:
		
		\begin{align}
			\int_{3D_{j,h}\setminus 2D_{j,h}}"\mbox{Res}"&\lesssim a_{j,h}^2\int_{3D_{j,h}\setminus 2D_{j,h}}e^{2k(h)\omega(x)}dx\\
			&\lesssim a_{j,h}^2e^{-\tilde{C}\,k(h)h\epsilon(h)}\int_{4D_{j,h}\setminus 3D_{j,h}}e^{2k(h)\omega(x)}dx\\
			&\lesssim e^{-\tilde{C}\,k(h)h\epsilon(h)}\int_{4D_{j,h}\setminus 3D_{j,h}}\tilde{f}_h(x)^2 e^{2k(h)\omega(x)}dx.
		\end{align}
		
		Hence, because all $D_{j,h}$ are $100h\epsilon(h)$-separated:
		
		\begin{equation}
			I\lesssim e^{-\tilde{C}\,k(h)h\epsilon(h)}\int_{B(0,\tilde{R}_1+20h\epsilon(h))\setminus \bigcup_j(3D_{j,h})} \tilde{f}_h(x)^2e^{2k(h)\omega(x)}dx.
		\end{equation}
		
		Note then that 
		\begin{equation}
			\frac{C_0}{k(h)^2}\int_{B(0,\tilde{R}_1+20h\epsilon(h))\setminus \bigcup_j(3D_{j,h})} \tilde{f}_h(x)^2 e^{2k(h)\omega(x)}dx>2I
		\end{equation}
		as soon as $\frac{C_0}{k(h)^2} \gg e^{-\tilde{C}\,k(h)h\epsilon(h)}$. We choose then $k(h)=-c h^{-1}\epsilon(h)^{-1}\log(h\epsilon(h))$ with $c>2$ so $k(h)^{-1}$ decrease faster than $h\epsilon(h)$. 
		
		With the precedent inequality we get:		
		\begin{equation}
			2I \leq \frac{C_0}{k(h)}\int_{B(0,\tilde{R}_1+20h\epsilon(h)\setminus\bigcup_j(3D_{j,h})} \tilde{f}_h(x)^2 e^{2k(h)\omega(x)}dx \leq I + II.
		\end{equation}
		
		We deduce a minor bound for $II$: 		
			\begin{align}
				& II\geq  \frac{C_0}{2k(h)}\int_{B(0,\tilde{R}_1+20h\epsilon(h)\setminus\bigcup_j(3D_{j,h})} \tilde{f}_h(x)^2 e^{2k(h)\omega(x)}dx\nn\\
				&\geq e^{-C' k(h)}\int_{B(0,\tilde{R}_1)\setminus\bigcup_j(3D_{j,h})} \tilde{f}_h(x)^2dx.
			\end{align}
			
Now, if $5D_{j,h}\subset B(0,\tilde{R}_2)\setminus B(0,\tilde{R}_1)$, then $\int_{3D_{j,h}\setminus 1,5D_{j,h}}\tilde{f}_h^2\simeq \int_{4D_{j,h}\setminus 3D_{j,h}}\tilde{f}_h^2$. By Cauchy estimate:
			\begin{align}
				\int_{B(0,\tilde{R}_2)\setminus B(0,\tilde{R}_1) \bigcup_j(3D_{j,h})} &\tilde{f}_h(x)^2dx  \gtrsim\int_{B(0,\tilde{R}_2-10h\epsilon(h))\setminus B(0,\tilde{R}_1) \bigcup_j(1,5D_{j,h})}\tilde{f}_h(x)^2dx\nn\\
				&\gtrsim \int_{B(0,\tilde{R}_2-11h\epsilon(h))\setminus B(0,\tilde{R}_1+10h\epsilon(h)) \bigcup_j(2D_{j,h})} \tilde{f}_h(x)^2+|k(h)^{-1}\nabla \tilde{f}_h(x)|^2dx.
			\end{align}
			
			Thus we obtain a bound for $II$:
			
			\begin{align}
				&\sup_{B(0,\tilde{R}_0)}e^{2k(h)\omega(x)}\int_{B(0,\tilde{R}_2)\setminus B(0,\tilde{R}_1) \bigcup_j(3D_{j,h})} \tilde{f}_h(x)^2dx\nn\\
				&\gtrsim\int_{B(0,\tilde{R}_2-11h\epsilon(h))\setminus B(0,\tilde{R}_1+10h\epsilon(h)) \bigcup_j(2D_{j,h})} (\tilde{f}_h(x)^2+|\nabla \tilde{f}_h(x)|^2)e^{2k(h)\omega(x)}dx\nn\\
				&\gtrsim II.
			\end{align}
			
			Hence, with $\omega(x)=x_1$, we get:
			\begin{equation}
			e^{2k(h)R_0}\int_{B(0,\tilde{R}_2)\setminus B(0,\tilde{R}_1) \bigcup_j(3D_{j,h})} \tilde{f}_h^2\geq e^{-C'\,k(h)}\int_{ B(0,\tilde{R}_1)\setminus \bigcup_j(3D_{j,h})} \tilde{f}_h^2.
			\end{equation}
			
			That is to say:
			\begin{equation}
			\int_{B(0,\tilde{R}_2)\setminus B(0,\tilde{R}_1) \bigcup_j(3D_{j,h})} \tilde{f}_h^2\geq e^{-C\,k(h)}\int_{ B(0,\tilde{R}_1)\setminus \bigcup_j(3D_{j,h})} \tilde{f}_h^2
			\end{equation}
			with $C>0$.
		\end{itemize}

	\end{proof}
\section{Control of the Riemann mapping }
\label{subsect: controle de partialz Rh(z)}
\begin{lem}
		\hfill\\
		Let $K$ be a compact set of $B(0,R_0)$ such that $d(K, B(0,R_0)^c)>\delta>0$ with $\delta>0$. For $h$ small enough:
		\begin{itemize}
			\item There is a constant $\tilde{A}>0$ independent of $h$ such that:
			
			\begin{equation}
				\sup_{z\in \psi_h(K)}|\partial_z R_h(z)|\leq \tilde{A}
			\end{equation}
			
			\item There is a constant $\tilde{B}>0$ independent of $h$ such that:
			
			\begin{equation}
				\frac{1}{\tilde{B}}\leq \inf_{z\in K}|\partial_z R_h(z)|
			\end{equation}
			
		\end{itemize}
	\end{lem}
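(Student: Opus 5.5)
The plan is to show that $\psi_h$ converges to the identity uniformly on $\overline{B(0,R_0)}$ as $h\to 0$. If that holds, the domains $\Omega'_h:=\psi_h(B(0,R_0))$ converge to $B(0,R_0)$ in the Carathéodory kernel sense. The Riemann maps $R_h:\Omega'_h\to B(0,R_0)$, normalized by $R_h(\psi_h(0))=R_h(0)=0$ (and, say, $\partial_z R_h(0)>0$), then converge locally uniformly to the identity of $B(0,R_0)$ together with their derivatives. Since $\psi_h(K)$ stays in a fixed compact subset of $B(0,R_0)$ at distance $\geq\delta/2$ from the boundary, $|\partial_z R_h|$ is uniformly close to $1$ there. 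This gives both $\tilde A$ and $\tilde B$, for instance $\tilde A=\tilde B=2$ for $h$ small.

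\emph{Step 1: $\psi_h\to\mathrm{id}$ uniformly on compacts.} The Beltrami coefficients satisfy $\|\mu_h\|_\infty\lesssim\epsilon(h)^2\to0$ and are supported in a fixed compact set, and $\psi_h$ has the normalization of Proposition~\ref{prop: def_de_psi_h}. One route is normal-family compactness of $K$-quasiconformal maps with bounded $K$ and fixed normalization: every limit is $1$-quasiconformal, hence conformal on the sphere, so it is affine, and by the normalization and the hydrodynamic condition $\partial_z\psi_h-1\in L^p$ it is the identity. A quantitative alternative is to write $\psi_h(z)=z+\mathcal C(\partial_{\bar z}\psi_h)$, with $\mathcal C$ the Cauchy transform. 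Then $\partial_{\bar z}\psi_h=\mu_h\partial_z\psi_h$ has $L^p$ norm $\lesssim\epsilon(h)^2$ by the estimate of Proposition~\ref{prop: def_de_psi_h}. Because the support is compact, $\mathcal C$ maps $L^p$, $p>2$, boundedly into $L^\infty$ on compacts, and so $\sup_{\overline{B(0,R_0)}}|\psi_h-\mathrm{id}|\lesssim\epsilon(h)^2$. Mori's inequality \eqref{eq: InegdeMori} serves as a consistency check on distances.

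\emph{Step 2: control of $R_h$.} From Step 1, $\Omega'_h$ is a Jordan domain squeezed between $B(0,R_0-\eta_h)$ and $B(0,R_0+\eta_h)$ with $\eta_h\to0$. Consider $S_h:=R_h$ restricted to $B(0,R_0-\eta_h)$. It is univalent into $B(0,R_0)$ and fixes $0$. Likewise $R_h^{-1}$ restricted to $B(0,R_0)$ maps into $B(0,R_0+\eta_h)$. The Schwarz lemma applied to both gives $\frac{R_0-\eta_h}{R_0}\le$ and $\frac{R_0}{R_0+\eta_h}\le\ldots$; in fact we get $|\partial_z R_h(0)|\in[R_0/(R_0+\eta_h),R_0/(R_0-\eta_h)]$. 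The Koebe distortion theorem, applied to the univalent function $S_h$ on $B(0,R_0-\eta_h)$, then bounds $|\partial_z R_h(z)|$ above and below for $|z|\le R_0-\delta/2$ in terms of $|\partial_z R_h(0)|$ and the ratio $|z|/(R_0-\eta_h)$. Those bounds depend only on $\delta$ and $R_0$, which yields $\tilde A,\tilde B$ directly without even needing convergence to the identity. Finally, $\psi_h(K)\subset B(0,R_0-\delta+\eta_h)\subset B(0,R_0-\delta/2)$ for $h$ small. (The statement's second bullet writes $\inf_{z\in K}$; the same argument covers both $K$ and $\psi_h(K)$.)

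The main obstacle is Step 1: getting uniform closeness of $\psi_h$ to the identity up to the boundary of $B(0,R_0)$, so that $\Omega'_h$ really contains $B(0,R_0-\eta_h)$. I would first try the Cauchy-transform bound, using the $L^p$ estimate of Proposition~\ref{prop: def_de_psi_h} with a fixed $p>2$. If that fails, I would fall back on the compactness argument, which gives $\eta_h\to0$ non-quantitatively; that suffices for the lemma.
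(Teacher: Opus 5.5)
Your proposal is correct. It departs from the paper mainly in Step~2.

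Your Step~1 has the same content as the paper's first step. There, the $L^p$ bound of Proposition~\ref{prop: def_de_psi_h} is combined with the embedding $W^{1,3}\subset C^{0,1/3}$ to get $\|\psi_h-\mathrm{Id}\|_{L^\infty(B(0,R_0))}\to 0$, which is what your Cauchy-transform estimate gives.

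For the derivative bounds, the paper uses only Cauchy estimates. The upper bound comes from $|R_h|\le R_0$ together with the distance from $\psi_h(K)$ to the complement of $\psi_h(B(0,R_0))$. The lower bound comes from a Cauchy estimate on $R_h^{-1}$, which is bounded by $R_0+a$ on $B(0,R_0)$. That estimate must be taken at the points of $g_h(K)=R_h(\psi_h(K))$, so the paper needs these points to stay a fixed distance from $\partial B(0,R_0)$, and it extracts this from Mori's inequality \eqref{eq: InegdeMori}.

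Your route is different. You apply the Schwarz lemma to $R_h$ on $B(0,R_0-\eta_h)$ and to $R_h^{-1}$ on $B(0,R_0)$, which traps $|\partial_z R_h(0)|$ in $[R_0/(R_0+\eta_h),R_0/(R_0-\eta_h)]$. Koebe distortion for the univalent restriction then gives both bounds at once from univalence alone. This never needs to locate $g_h(K)$, so Mori is not used, and it produces explicit constants. The price is appealing to the Koebe distortion theorem instead of elementary Cauchy estimates.

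You also make explicit the Jordan-curve fact $B(0,R_0-\eta_h)\subset\psi_h(B(0,R_0))$, which the paper uses tacitly. Finally, your bounds hold on all of $B(0,R_0-\delta/2)$, so they cover both $K$ and $\psi_h(K)$. The $\inf_{z\in K}$ versus $\inf_{z\in\psi_h(K)}$ discrepancy in the statement is therefore harmless.
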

	
	\begin{proof}

		According to proposition \ref{prop: def_de_psi_h} and the Sobolev embedding  $W^{1,3}\subset C^{0,1/3}$, we can show that for $h$ small enough:
		
		\begin{equation}
			||\psi_h- Id||_{L^\infty(B(0,R_0))}\leq R_0^{1/3}||\partial_z\psi_h-1||_{L^3(B(0,R_0))}\longrightarrow 0.
		\end{equation}
		
		So for all $a>0$ small, we have for $h$ small enough, for all $z\in B(0,R_0)$,
		
		\begin{equation}
			\label{eq: eq_sur_psih_pour_Rh}
			|\psi_h(z)-z|\leq a.
		\end{equation} 
		
		\begin{itemize}
			
			\item $R_h$ is a bi-holomorphic mapping from $\psi_h(B(0,R_0))$ to $B(0,R_0)$. According to Cauchy's estimate, for all $\delta>0$, there is a constant $A$ independent of $h$, such that for all compact set $K$ of $B(0,R_0)$ with $d(K, B(0,R_0)^c)>\delta$, we have
			
			\begin{equation}
				\sup_{z\in \psi_h(K)}|\partial_z R_h(z)| \leq A \sup_{z\in \psi_h(B(0,R_0))}|R_h(z)|\leq A R_0
			\end{equation}
			(because $\psi_h$ send $K$ on a compact set $\tilde{K}$ such that  $d(\tilde{K}, \psi_h(B(0,R_0))^c)>\tilde{\delta}=\delta-a>0$ for $h$ small enough).

			\item Since $R_h$ is bi-holomorphic, $|\partial_z R_h|>0$. So by Cauchy's estimate
			
			\begin{align}
				&\sup_{z\in \psi_h(K)}\left|\frac{1}{\partial_z R_h(z)}\right|\nn\\
				&= \sup_{z\in R_h\circ\psi_h(K)}|\partial_z R_h^{-1}(z)|\leq B \sup_{z\in B(0,R_0)} | R_h^{-1}(z)|,
			\end{align}
			
			because by the Mori inequality \ref{eq: InegdeMori}, $g_h=R_h\circ\psi_h(K)$ send $K$ on $\tilde{K}$ such that\\ $d(\tilde{K}, \psi_h(B(0,R_0))^c)>\tilde{\delta}>0$ for $h$ small enough.\\
			But $ R_h^{-1}(z)$ send $B(0,R_0)$ on $\psi_h(B(0,R_0))$. By the equation \ref{eq: eq_sur_psih_pour_Rh}, we have:
			
			\begin{equation}
				\sup_{z\in B(0,R_0)} | R_h^{-1}(z)|=\sup_{z\in B(0,R_0)} | \psi_h(z)|\leq a+R_0.
			\end{equation}
			So:
			
			\begin{equation}
				\sup_{z\in \psi_h(K)}\left|\frac{1}{\partial_z R_h(z)}\right|\leq \tilde{B},
			\end{equation}
			
			equivalently:
			
			\begin{equation}
				\frac{1}{\tilde{B}}\leq \inf_{z\in \psi_h(K)}|R_h(z)|.
			\end{equation}
			
		\end{itemize}
	\end{proof}
\section{Cacciopoli Lemma}
\label{subsect: Cacciopoli}
	\begin{prop}(Cacciopoli)\\
		Let be $\eta \in \mathcal{C}_0^\infty(\Omega)$ with $\Omega$ an open of $\mathbb{R}^2$. We have:
		
		\begin{equation}
			\int_{\Omega}|\eta\nabla u_{per,h}|^2 \leq 6\int_{\Omega}u_{per,h}^2 \left( |\nabla \eta|^2 + \frac{|V_{per}-E|}{h^2}\eta^2 \right)
		\end{equation}
		
	\end{prop}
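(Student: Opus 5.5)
The plan is the standard energy argument: test the weak form of the equation against $\eta^2 u_{per,h}$ and absorb the cross term with Young's inequality. Write $u=u_{per,h}$ and $W=V_{per}-E\in L^\infty$. The equation $-h^2\Delta u+Wu=0$ is equivalent, after dividing by $h^2$, to
\[
\int_\Omega \nabla u\cdot\nabla \beta=-\int_\Omega \frac{W}{h^2}u\beta \quad\text{for all }\beta\in\mathcal C_0^\infty(\Omega).
\]
By Lemma~\ref{lem: C1_regularity_solution_Schrodinger}, $u\in\mathcal C^1(\Omega)$. Since $\eta\in\mathcal C_0^\infty(\Omega)$, the function $\beta=\eta^2u$ is $\mathcal C^1$ with compact support in $\Omega$. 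It is therefore an admissible test function: approximate it in $W^{1,2}$ by mollification, and pass to the limit using that $W$ is bounded and $u$ is locally bounded.

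The main computation is as follows. Since $\nabla(\eta^2u)=\eta^2\nabla u+2\eta u\nabla\eta$,
\[
\int_\Omega \eta^2|\nabla u|^2 = -2\int_\Omega \eta u\,\nabla\eta\cdot\nabla u-\int_\Omega \frac{W}{h^2}\eta^2u^2 .
\]
Bound the potential term by $\int \frac{|W|}{h^2}\eta^2u^2$. For the cross term, apply $2ab\le \tfrac12 a^2+2b^2$ with $a=\eta|\nabla u|$ and $b=|u\nabla\eta|$, which gives
\[
2\Big|\int \eta u\nabla\eta\cdot\nabla u\Big|\le \tfrac12\int\eta^2|\nabla u|^2+2\int u^2|\nabla\eta|^2 .
\]

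To finish, absorb the $\tfrac12\int\eta^2|\nabla u|^2$ term into the left-hand side; this absorption is legitimate because that integral is finite, as $u\in\mathcal C^1$ on the support of $\eta$. This yields $\int\eta^2|\nabla u|^2\le 4\int u^2|\nabla\eta|^2+2\int\frac{|W|}{h^2}\eta^2u^2$, which is stronger than the claimed bound with constant $6$.

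I expect no serious obstacle. The only delicate point is justifying the test function $\eta^2u$ when $u$ is only a distributional solution, and this is handled by the regularity lemma together with the density argument above.
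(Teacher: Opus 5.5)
Your proof is correct and is essentially the paper's argument: both test the equation against $\eta^2 u_{per,h}$. The paper regroups via the identity $\nabla u\cdot\nabla(u\eta^2)=|\nabla(u\eta)|^2-|u\nabla\eta|^2$ and the bound $|a+b|^2\ge \tfrac12|a|^2-2|b|^2$, where you use Young's inequality and absorption, which gives the slightly better constants $4$ and $2$; your justification of $\eta^2u_{per,h}$ as a test function via Lemma~\ref{lem: C1_regularity_solution_Schrodinger} and mollification is a step the paper leaves implicit.
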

	
	\begin{proof}
		
		Indeed,
		
		\begin{equation}
			\int_{\Omega}(-h^2\Delta+V_{per}-E)u_{per,h}\; (u_{per,h}\eta^2)=0
		\end{equation}
		
		and 
		
		\begin{align}
			\nabla u_{per,h} \nabla (u_{per,h}\eta^2) &= \nabla u_{per,h} \nabla (u_{per,h}\eta)\,\eta+ \nabla u_{per,h} \nabla \eta\,(u_{per,h}\eta)\nn\\
			&=(\nabla (u_{per,h}\eta) -u_{per,h}\nabla \eta)\nabla(u_{per,h}\eta)+ (\nabla (u_{per,h}\eta) -u_{per,h}\nabla \eta)u_{per,h}\nabla(\eta)\nn\\
			&=|\nabla (u_{per,h}\eta)|^2 -|u_{per,h}\nabla \eta|^2
		\end{align}
		
		By using the inequality $|a+b|^2\geq |a|^2/2-2|b|^2$ with $|\nabla( u_{per,h}\, \eta)|^2 =|\eta\nabla u_{per,h} +u_{per,h}\nabla  \eta|^2$,
		we get:
		
		\begin{equation}
			\int_{\Omega}|\eta\nabla u_{per,h}|^2 \leq 6\int_{\Omega}u_{per,h}^2 \left( |\nabla \eta|^2 + \frac{|V_{per}-E|}{h^2}\eta^2 \right)
		\end{equation}
		
	\end{proof}
\section{Carleman inequality on small ball}
\label{subsect: AP_Carl_small}
\begin{thm}
	Let $u_h$ be a solution of $(-h^2\Delta + V - E)u_h = 0$ with $E \in I$ compact subset of $\mathbb{R}$, $V \in L^\infty(2D_h,\R)$ and $2D_h$ is a ball with radius $\sim h\epsilon(h)$. Then there is $C_0 > 0$ independent of $h$ and $u_h$ such that for $h > 0$ small enough:
	\begin{equation}
		\int_{D_{h}} |u_h(x)|^2dx \leq C_0 \int_{A_{h}} |u_h(x)|^2dx 
	\end{equation}
	
	where $A_{h} = 2D_{h} \setminus \D_{h}$.
	\end{thm}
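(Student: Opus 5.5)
The plan is to rescale the ball $2D_h$ to a fixed size, where the potential term becomes a small perturbation of the Laplacian. Then I combine a local boundedness estimate on the annulus with a maximum principle for a divergence-form equation obtained through the ground-state trick of Section~\ref{subsect: eqdivsuresptroue}. No exponential weight appears because the equation is uniformly close to $\Delta$ at this scale.

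\emph{Rescaling.} Let $x_0$ be the centre of $D_h$ and $\rho_h \simeq h\epsilon(h)$ its radius. Set $v(y) = u_h(x_0+\rho_h y)$ for $y\in B(0,2)$. Then
\[
-\Delta v + W v = 0, \qquad W(y) = \rho_h^2 h^{-2}\,(V-E)(x_0+\rho_h y).
\]
Since $V$ is bounded and $E$ ranges over the compact set $I$, we have $\|W\|_{L^\infty} \lesssim \epsilon(h)^2 \to 0$. The claimed inequality is invariant under this dilation (both sides scale by $\rho_h^2$). So it suffices to prove $\int_{B_1}v^2 \le C_0\int_{B_2\setminus B_1} v^2$ for all solutions of $-\Delta v+Wv=0$ on $B_2$ with $\|W\|_\infty\le\delta$, where $\delta$ is a small fixed number.

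\emph{Ground state and divergence form.} The Poincaré constant of $B_2$ is $O(1)$, so $\|W\|_\infty$ times it is $\ll 1$. Lemma~\ref{lem: small_laplacian_free_solution} (in rescaled form) then gives a weak solution $\varphi = 1+\tilde\varphi$ of $-\Delta\varphi+W\varphi=0$ on $B_2$, with $\tilde\varphi\in W^{1,2}_0(B_2)$ and $\|\tilde\varphi\|_\infty\lesssim\delta\le 1/2$. Exactly as in Lemma~4.1 of \cite{logunov2020landisconjectureexponentialdecay}, the quotient $f=v/\varphi$ is a weak solution of $\mathrm{div}(\varphi^2\nabla f)=0$ on $B_2$. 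This is a uniformly elliptic equation with coefficient $\varphi^2\in[1/4,9/4]$. By Lemma~\ref{lem: C1_regularity_solution_Schrodinger}, $v$ and $\varphi$ are continuous, and hence so is $f$.

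\emph{Two estimates.} First, the weak maximum principle for divergence-form equations (De Giorgi–Stampacchia) gives
\[
\sup_{B_{3/2}}|f| \le \sup_{\partial B_{3/2}}|f|.
\]
Second, the De Giorgi–Nash–Moser local boundedness estimate holds with constants depending only on the ellipticity bounds, which are uniform in $h$. Covering the circle $\partial B_{3/2}$ by finitely many balls of radius $1/8$, whose doubles lie in $B_2\setminus \overline{B_1}$, it gives
\[
\sup_{\partial B_{3/2}}|f| \le C\Big(\int_{B_2\setminus B_1} f^2\Big)^{1/2}.
\]
Since $1/2\le\varphi\le 3/2$, we have $|v|\simeq|f|$, so
\[
\int_{B_1}v^2 \le |B_1|\sup_{B_{3/2}}|f|^2 \lesssim \int_{B_2\setminus B_1} f^2 \lesssim \int_{B_2\setminus B_1}v^2 .
\]
Undoing the scaling gives the statement with $C_0$ independent of $h$ and $u_h$.

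\emph{Main obstacle.} The delicate point is justifying the maximum principle at the available regularity: $\varphi$ is only $W^{1,2}$, and $f$ solves a merely measurable-coefficient equation. I would handle this as in the proof of Lemma~\ref{lem: positivity_radius}. Test the weak equation with $(f-M)_+\in W^{1,2}_0(B_{3/2})$, where $M=\sup_{\partial B_{3/2}}f$; this uses Lemma~\ref{lem: utility_1} together with the continuity of $f$. This yields $\nabla (f-M)_+=0$, hence $f\le M$, and the symmetric argument bounds $-f$. If this route became awkward, a fallback is to apply Moser iteration directly to $-\Delta v+Wv=0$, which is valid for bounded $W$. One would then run a Caccioppoli-type energy argument with a cutoff equal to $1$ on $B_{3/2}$, absorbing the $W$-term thanks to $\|W\|_\infty\le\delta$. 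That route, however, only controls $\int_{B_1}|\nabla v|^2$ and still needs the maximum principle or a unique continuation step to reach $\int_{B_1}v^2$. So the $\varphi$-trick is the route I would pursue first.
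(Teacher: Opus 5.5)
Your proof is correct, and it takes a genuinely different route from the paper's.

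\textbf{The paper's route.} The paper uses a semiclassical Carleman estimate adapted to the shrinking ball, following Klopp and Vogel. It passes to the parameter $\tilde h = h^{4/3}\delta^{1/3}$. It uses the weights $\epsilon(h)^{-4/3}|x-x_{j,h}|^{4/3}$ centred at two distinct points and glues them with cut-offs, because a single radial weight has a critical point at the centre. It checks the H\"ormander condition $\{\mathrm{Re}\,p_\phi,\mathrm{Im}\,p_\phi\}>0$ on $p_\phi=0$, applies G\aa rding, absorbs $V-E$ through the small parameter $\delta$, and controls the cut-off commutators by Caccioppoli. Finally it notes that on a ball of radius $\sim h\epsilon(h)$ all weights are bounded above and below by constants depending only on $\delta$, so no exponential factor remains.

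\textbf{Your route.} You use directly the reason no exponential factor appears. After dilation, the zeroth-order term has size $\rho_h^2h^{-2}\|V-E\|_\infty\simeq\epsilon(h)^2$. The ground state of Lemma~\ref{lem: small_laplacian_free_solution} and the quotient trick of Section~\ref{subsect: eqdivsuresptroue} then give a uniformly elliptic divergence-form equation with no zeroth-order term. For that equation the weak maximum principle and interior $L^\infty$--$L^2$ bounds hold with $h$-independent constants.

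\textbf{What each buys.} Your argument has several advantages:
\begin{itemize}
\item it is shorter and relies only on tools already used elsewhere in the paper;
\item it avoids the pseudodifferential machinery and the bookkeeping of the gluing;
\item it yields the stronger pointwise bound $\sup_{D_h}|u_h|^2\lesssim\rho_h^{-2}\int_{A_h}|u_h|^2$;
\item it works for every ball of radius at most $r_0h$ (the threshold of Lemma~\ref{lem: positivity_radius}), without using $\epsilon(h)\to0$.
\end{itemize}
The Carleman approach does not need a positive multiplier. That is what one would use at scales where $V-E$ is no longer a small perturbation of $h^2\Delta$, at the cost of exponential constants, but that flexibility is not required here.

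\textbf{On your stated obstacle.} The regularity issue in your last paragraph is not actually an obstacle. $\varphi$ solves $-\Delta\varphi+W\varphi=0$ in $B_2$, so by Lemma~\ref{lem: C1_regularity_solution_Schrodinger} it is $C^1$ in the interior. Hence $f$ is $C^1$ near $\overline{B_{3/2}}$, and $(f-M)_+$ is Lipschitz, vanishes on $\partial B_{3/2}$, and lies in $W^{1,2}_0(B_{3/2})$. The fallback is not needed.
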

	
	\begin{proof}
		
		In order to obtain the theorem \ref{thm: Carlm contractant}, we'll use the work of \cite{kloppSemiclassicalResolventEstimates2019a} as a guide and the fact that the balls are shrinking. To simplify writing, we'll assume that $D_h=B(0,h\epsilon(h))$. 
		We start by modifying a little our operator:
		
		\begin{defn}
			
			\begin{equation}
				P_{V,E}:= \left(h\,\delta\right)^{2/3}(-h^2 \Delta+V-E)
			\end{equation}
			
			Where $\delta$ is a new little parameter independent of $h$ which will be fixed later.
		\end{defn}
		
		With $\tilde{h}:=h^{4/3}\delta^{1/3}$, we get:
		
		\begin{equation}
			P_{V,E}=-\tilde{h}^2\Delta +\left(\tilde{h}\delta\right)^{1/2}(V-E).
		\end{equation}
		
		Let $\phi:\mathbb{R}^2 \rightarrow \mathbb{R}$ be a real-valued function that will also be fixed later, we set
		
		\begin{equation}
			P_\phi=e^{\phi/\tilde{h}}P_{V,E} e^{-\phi/\tilde{h}}.
		\end{equation}
		
		Let $v\in \mathcal{C}_0^\infty(B(0,2h\epsilon(h)))$ be a test function, we have this lower bound for $P_\phi$:
		
		\begin{align}
			\label{eq: inég carleman V Linfini}
			\lVert P_\phi v\lVert_{L^2}\geq \lVert e^{\phi/\tilde{h}}(-\tilde{h}^2 \Delta)e^{-\phi/\tilde{h}} v\lVert_{L^2}- \left(\tilde{h}\delta\right)^{1/2}\lVert V-E \lVert_\infty \lVert v\lVert_{L^2}.
		\end{align}
		
		We seek to bound from below the term  $\lVert e^{\phi/\tilde{h}}(-\tilde{h}^2 \Delta)e^{-\phi/\tilde{h}} v\lVert_{L^2}$ by the $L^2$ norm of $v$. The ideal tool for this is Carleman's inequality:
		
		\begin{lem}(Semiclassical Carleman inequality)\\
			Let $U$ be an open set of $\R^2$. Let $P$ be a real elliptic semiclassical operator of $2^{nd}$order with principal symbol $p$. Suppose that the Poisson bracket $\{Re\, p, Im\, p \}>0$ when $p=0$. Then there is $C>0$ and $h_0>0$ such that, for all $v\in \mathcal{C}_{0}^{\infty}(U)$ and for all $0<h<h_0$:
			
			\begin{equation}
				\| P v\|_{L^2}\geq C\,h^{1/2}\|v\|_{H^2} .
			\end{equation}
			
		\end{lem}
		
		For apply this estimate to $ e^{\phi/\tilde{h}}(-\tilde{h}^2 \Delta)e^{-\phi/\tilde{h}}$, we need to find a $\phi$ that meet the requirements. We  would like to have $\phi$ radial but such function has a degenerated critical point at its centre  (Figure~\ref{fig:PointCritiqueFonctionRadiale}), the consequence is that $\{Re\, p, Im\, p \}$ can't be positive when $p=0$. 
		
		\begin{figure}
			\centering
			\includegraphics[width=0.6\linewidth]{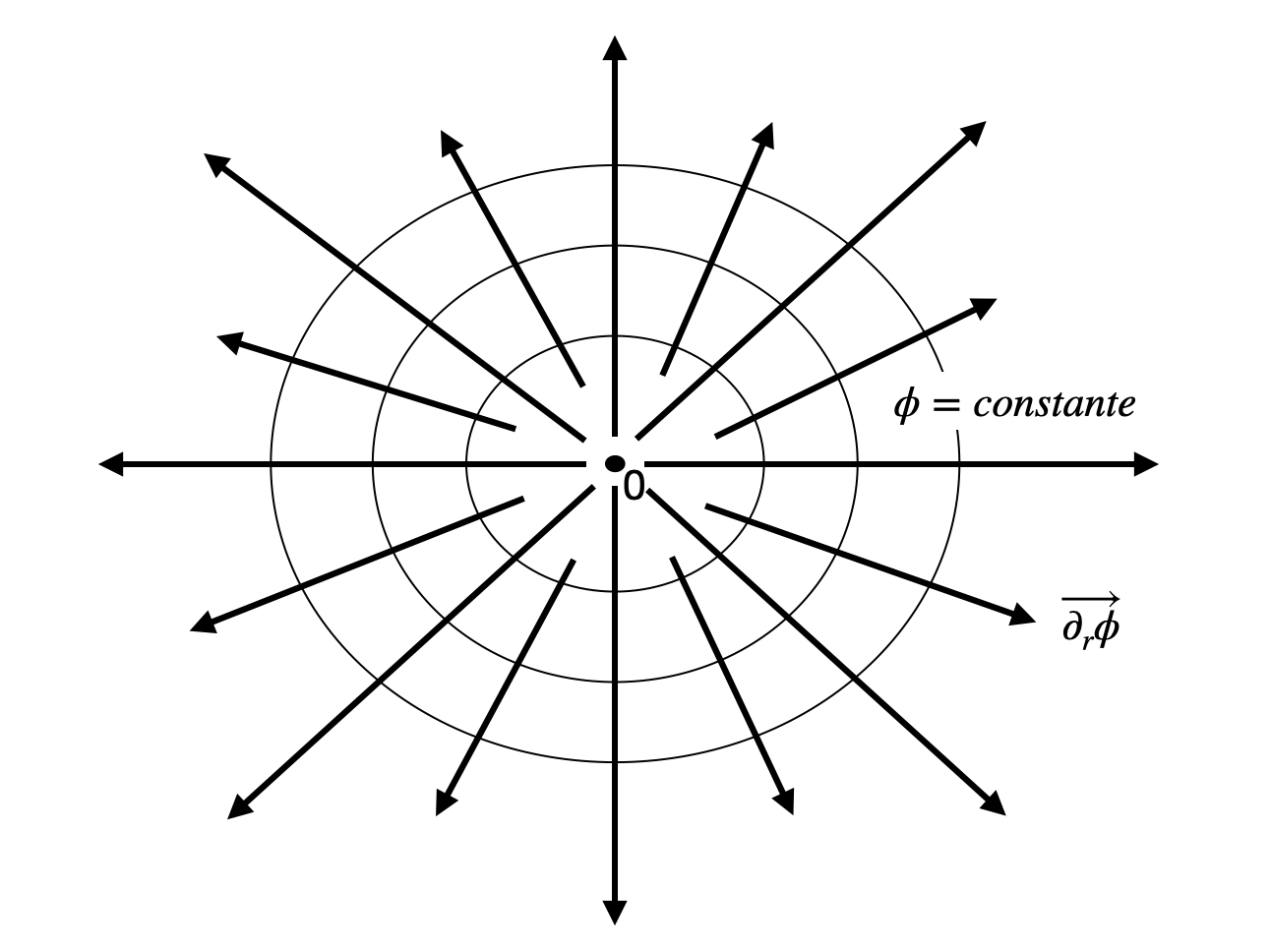}
			\caption{Critical point for a radial function: $\phi$ is a radial function in $0$. $\phi$ is constant on circles centred on $0$ and moves orthogonally to these circles. In $0$ the derivative vector should point in all directions, which is impossible. So $0$ is a critical point of $\phi$.}
			\label{fig:PointCritiqueFonctionRadiale}
		\end{figure}
		
		To solve this problem, let us take two points $x_{1,h}$ and $x_{2,h}$ of $B(0,2h\epsilon(h))$ close enough to $0$ and pose:
		
		\begin{equation}
			\label{eq: phi}
			e^{2\phi(x)/h}= e^{2\phi_{x_{1,h}}(x)/h}\chi_{1,h}^2(x)+e^{2\phi_{x_{2,h}}(x)/h}\chi_{2,h}^2(x)
		\end{equation}
		
		with
		
		\begin{itemize}
			\item $\phi_{x_{1,h}}(x):=\epsilon(h)^{-4/3} \|x-x_{1,h}\|^{4/3}$,
			\item $\phi_{x_{2,h}}(x):=\epsilon(h)^{-4/3} \|x-x_{2,h}\|^{4/3}$,
			\item $ \chi_{1,h}$ zero on a ball $B_{1,h}$ centred on $x_{1,h}$ with radius $h\epsilon(h)/100$ and equal to $1$ on $\mathbb{R}^2\setminus\tilde{B}_{1,h}$ where $B_{1,h}\subset \tilde{B}_{1,h}$,
			\item $\chi_{2,h}$ zero on a ball $B_{2,h}$ centred on $x_{2,h}$ with radius $h\epsilon(h)/100$ and equal to $1$ on $\mathbb{R}^2\setminus\tilde{B}_{2,h}$ where $B_{2,h}\subset  \tilde{B}_{2,h}$,
			\item $\tilde{B}_1 \bigcap \tilde{B}_2 = \emptyset$ (Figure.~\ref{fig:ConstructionDePhi}).
			\vspace{0,5cm}
		\end{itemize}
		
		\begin{figure}
			\centering
			\includegraphics[width=0.6\textwidth]{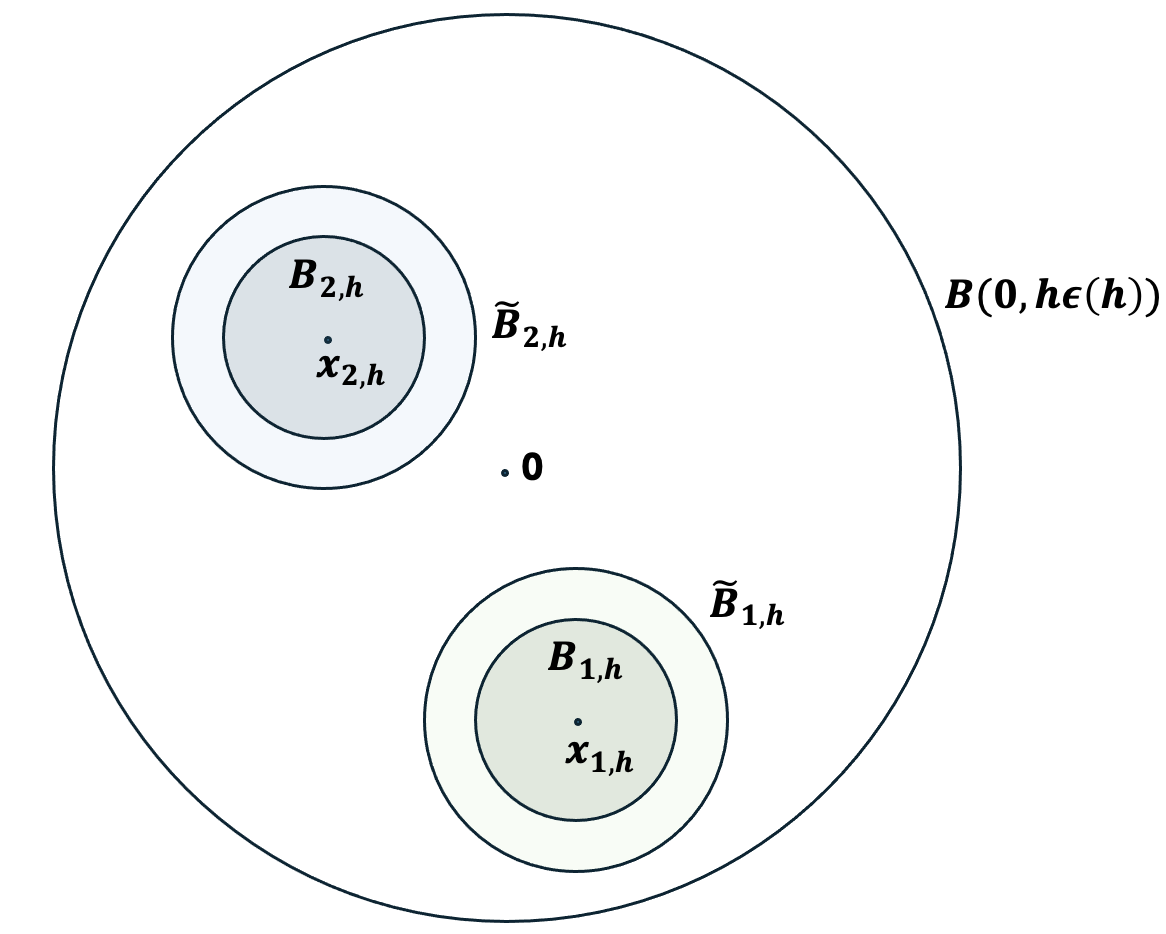}
			\caption{Cutting the ball $B(0,h\epsilon(h))$:  $\chi_{1,h}=0$ on $B_{1,h}$ and $=1$ outside $\tilde{B}_{1,h}$ (green), and $\chi_{2,h}=0$ on $B_{2,h}$ and $=1$ outside $\tilde{B}_{2,h}$ (blue). }
			\label{fig:ConstructionDePhi}
		\end{figure}
		
		With this construction, we are no longer trying to construct one Carleman inequality on a ball, but two on deformed rings,  $B(0,2h\epsilon(h))\setminus B_{1,h}$ and $B(0,2h\epsilon(h))\setminus B_{2,h}$. We will glue them back together later. To obtain the inequality, we adapt a semiclassical proof of the Carleman inequality to our framework.  
		For convenience, the next developments will be made on a ring $A(0,h\epsilon(h)/100,2h\epsilon(h))$ centre on $0$ and $\phi_h(r)= \epsilon(h)^{-4/3} r^{4/3}$. However, the approach and the result will be the same for  $B(0,2h\epsilon(h))\setminus B_{1,h}$ and $B(0,2h\epsilon(h))\setminus B_{2,h}$. 
		
		To begin with, we rewrite $\Delta$ in polar form:
		
		\begin{equation}
			-\tilde{h}^2\Delta= -\tilde{h}^2 \left[\partial_r^2 +\frac{1}{r}\partial_r +\frac{1}{r^2}\Delta_{S^{1}} \right].
		\end{equation}
		
		We expend the ball $B(0,h\epsilon(h))$ by a factor of $[h\epsilon(h)]^{-1}$. So we have the operator:
		
		\begin{equation}
			-\frac{\tilde{h}^2}{h^2\epsilon(h)^2}\left[\partial_r^2+ \frac{1}{r}\partial_r +\frac{1}{r^2}\Delta_{S^{1}}\right]
		\end{equation}
		
		on the ring $A(0,1/100,2)$. To obtain the Carleman inequality we will do some symbolic algebra, for that we quantify in $\delta^{1/3}$. The radial principal symbol of $-\delta^{2/3} \Delta$ is:
		
		\begin{equation}
			p(r,\sigma)= \sigma^2 + \frac{1}{r^2}(\Lambda^*)^2.
		\end{equation}
		
		So by composition with $e^{\pm \phi/\delta^{1/3}}$ ($\phi=r^{4/3}$), the radial principal symbol of $e^{\phi/\delta^{1/3}}(-\delta^{2/3} \Delta)e^{-\phi/ \delta^{1/3}}$ is:
		
		\begin{equation}
			p_{\phi}(r,\sigma)=p(r,\sigma +i\partial_r \phi),
		\end{equation}
		
		that is:
		
		\begin{equation}
			p_{\phi}(r,\sigma)= \left[(\sigma+i\partial_r\phi)^2+\frac{(\Lambda^*)^2}{r^2}\right].
		\end{equation} 
		
		Because of $\partial_r\phi>0$, the principal cancels when: 
		
		\begin{equation}
			 \sigma=0 \quad \mbox{and } (\partial_r\phi)^2=\frac{(\Lambda^*)^2}{r^2}.
		\end{equation}
		
		We want the Poisson bracket $\{Re\, p_{\phi}, Im\, p_{\phi}\}$ to be bounded by below when $p_{\phi}(r,\sigma)=0$.
		
		\begin{align}
			\{Re\, p_{\phi}, Im\, p_{\phi}\}=& 4\left[\sigma^2\partial_r^2\phi +\left(\partial_r^2 \phi \partial_r \phi +\frac{(\Lambda^*)^2}{r^3} \right)\partial_r \phi \right],
		\end{align}
		
		so when $p_{\phi}(r,\sigma)=0$, the Poisson bracket reduces to:
		
		\begin{equation}
			\{Re\, p_{\phi}, Im\, p_{\phi}\}= 4\left(\partial_r^2 \phi \partial_r \phi +\frac{(\partial_r \phi)^2}{r} \right)\partial_r \phi.
		\end{equation}
		
		 $\phi= r^{4/3}$, so we have the bound:
		
		\begin{equation}
			4\left(\partial_r^2 \phi \partial_r \phi +\frac{(\partial_r \phi)^2}{r} \right)\partial_r \phi \geq \partial_r^2 \phi (\partial_r \phi)^2 \geq C>0,
		\end{equation}
		
		therefore the Poisson bracket is positive.
		Then $|p_{\phi}|^2 =O( \langle \sigma \rangle^4=(1+|\sigma|^2)^2)$ when $\sigma$ is large. So we can find $\tilde{C}>0$ and $d>0$ such that:
		
		\begin{equation}
			d|p_{\phi}|^2+ \{Re\, p_{\phi}, Im\, p_{\phi}\} \geq  \tilde{C}\langle \sigma \rangle^4.
		\end{equation}
		
		By the Garding inequality (see for example \cite{alinhac2007pseudo}), we obtain for  $\delta$ small enough:
		
		\begin{align}
			&\frac{1}{\delta^{1/3}} \Biggl\langle \left[\left(e^{\phi/\delta^{1/3}}\left(-\delta^{2/3}\Delta\right)e^{-\phi/\delta^{1/3}}\right)^*,e^{\phi/\delta^{1/3}}\left(-\delta^{2/3}\Delta\right)e^{-\phi/\delta^{1/3}}   \right]v,v \Biggr\rangle \nn\\
			&+d\bigg\|e^{\phi/\delta^{1/3}}(-\delta^{2/3} \Delta)e^{-\phi/\delta^{1/3}}v\bigg\|_{L^2}^2\geq \frac{\tilde{C}}{2}\big\|v \big\|_{H^2}^2
		\end{align}
		
		for $v \in \mathcal{C}_0^{\infty}(A(0,1/100,2))$. With the classic equation:
		
		\begin{align}
			&\frac{1}{\delta^{1/3}} \Biggl\langle \left[\left(e^{\phi/\delta^{1/3}}\left(-\delta^{2/3}\Delta\right)e^{-\phi/\delta^{1/3}}\right)^*,e^{\phi/\delta^{1/3}}\left(-\delta^{2/3}\Delta\right)e^{-\phi/\delta^{1/3}}   \right]v,v \Biggr\rangle\nn\\
			&= \frac{1}{\delta^{1/3}}\Bigg\|e^{\phi/\delta^{1/3}}\left(-\delta^{2/3} \Delta\right)e^{-\phi/\delta^{1/3}}v\Bigg\|_{L^2}^2- \frac{1}{\delta^{1/3}}\Bigg\|\left(e^{\phi/\delta^{1/3}}\left(-\delta^{2/3} \Delta\right) e^{-\phi/\delta^{1/3}}\right)^*v\Bigg\|_{L^2}^2\nn\\
			&\leq \frac{1}{\delta^{1/3}}\Bigg\|e^{\phi/\delta^{1/3}}\left(-\delta^{2/3} \Delta\right)e^{-\phi/\delta^{1/3}}v\Bigg\|_{L^2}^2,			
		\end{align}
		
		 by replacing the $H^2$ norm by a $L^2$ norm and by multiplying the inequality by $h^{4/3}/\epsilon(h)^4$, we obtain this estimate:
		
		\begin{equation}
			\Big\|e^{\phi_h/\tilde{h}}(-\frac{\tilde{h}^2}{h^2\epsilon(h)^2} \Delta)e^{-\phi_h/\tilde{h}}v \Big\|_{L^2}^2\geq \frac{\tilde{h} C_0}{2\epsilon(h)^4}\big\|v \big\|_{L^2}^2.
		\end{equation}
		
		Finally, we come back to the ring $A(0,h\epsilon(h)/100,2h\epsilon(h))$:
		
		\begin{equation}
			\Big\| e^{\phi/\tilde{h}}(-\tilde{h}^2\Delta)e^{-\phi/\tilde{h}}v\Big\|_{L^2}^2\geq \frac{\tilde{h}C_0}{2\epsilon(h)^4}\big\|v \big\|_{L^2}^2
		\end{equation}
		
		for $v \in \mathcal{C}_0^{\infty}(A(0,h\epsilon(h)/100,2h\epsilon(h)))$.
	Then using the inequality \ref{eq: inég carleman V Linfini}, we get:
	
	\begin{align}
		\big\lVert P_\phi v \big\lVert_{L^2}&\geq \Big\lVert e^{\phi/\tilde{h}}(-\tilde{h}^2 \Delta)e^{-\phi/\tilde{h}} v \Big\lVert_{L^2}- \left(\tilde{h}\delta\right)^{1/2}\big\lVert V-E \big\lVert_\infty \big\lVert v \big\lVert_{L^2}\nn\\
		&\geq \frac{C_0\tilde{h}^{1/2}\delta^{1/6}}{\epsilon(h)^2 }\big\lVert v \big\lVert_{L^2}
	\end{align}
	
	for $\delta$  small enough. And so, we get with $v=e^{\phi/\tilde{h}}u$:
	
	\begin{equation}
		\left(h\delta\right)^{2/3}\Big\lVert e^{\phi/\tilde{h}}(-h^2\Delta+V-E) u\Big\lVert_{L^2}\geq \frac{C_0 h^{2/3}\delta^{1/3}}{\epsilon(h)^2}\Big\lVert e^{\phi/\tilde{h}}u \Big\lVert_{L^2},
	\end{equation}
	
	That is:
	
	\begin{prop}
		\label{prop: Carleman_avant_recollement}
		For $u \in\mathcal{C}_0^{\infty} (A(0, h\epsilon(h)/100, 2h\epsilon(h)))$:
		
		\begin{equation}
			\frac{\epsilon(h)^2\delta^{1/3}}{C_0}\Big\lVert e^{\phi/\tilde{h}}(-h^2\Delta+V-E) u\Big\lVert_{L^2}\geq \Big\lVert e^{\phi/\tilde{h}}u\Big\lVert_{L^2}.
		\end{equation}
		
	\end{prop}
	
	Finally, based on the result of the proposition \ref{prop: Carleman_avant_recollement}, which also applies to $B(0,2h\epsilon(h))\setminus B_{1,h}$ and $B(0,2h\epsilon(h))\setminus B_{2,h}$,  the resulting inequalities must be put back together and we must remove the assumption of compact support. To do this, we proceed with $\chi_{h} u$ instead of $u$ where $\chi_{h}$ is a stage function on $B(0,\frac{4}{3}h\epsilon(h))$ supported in $B(0,\frac{5}{3}h\epsilon(h))$ and $u$ is smooth. By definition of $\phi$ (see equation~\ref{eq: phi}):
	
	\begin{align}
		\int_{B(0,2h\epsilon(h))} e^{2\phi(x)/\tilde{h}} |\chi_{h}(x) u(x)|^2 dx &= \int_{B(0,2h\epsilon(h))} e^{2\phi_{x_{1,h}}(x)/\tilde{h}} \chi_{1,h}^2(x) |\chi_{h}(x) u(x)|^2 dx \nn\\
		&\quad +\int_{B(0,2h\epsilon(h))} e^{2\phi_{x_{2,h}}(x)/\tilde{h}} \chi_{2,h}^2(x) |\chi_{h}(x) u(x)|^2 dx\nn\\
		(\mbox{prop \ref{prop: Carleman_avant_recollement}})\leq \left(\frac{\epsilon(h)^2\delta^{1/3}}{C_0}\right)^2  \int_{B(0,2h\epsilon(h))} & e^{2\phi_{x_{1,h}}(x)/\tilde{h}}  |(-h^2\Delta +V(x)-E)\chi_{1,h}(x) \chi_{h}(x)u(x)|^2 dx   \nn\\
		\quad +\left(\frac{\epsilon(h)^2\delta^{1/3}}{C_0}\right)^2\int_{B(0,2h\epsilon(h))} & e^{2\phi_{x_{2,h}}(x)/\tilde{h}}  |(-h^2\Delta +V(x)-E)\chi_{2,h}(x) \chi_{h}(x)u(x)|^2 dx   \nn\\
		\leq \left(\frac{\epsilon(h)^2\delta^{1/3}}{C_0}\right)^2  \int_{B(0,2h\epsilon(h))} & e^{2\phi_{x_{1,h}}(x)/\tilde{h}} \chi_{1,h}^2(x) \chi_{h}^2(x) |(-h^2\Delta +V(x)-E)u(x)|^2 dx   \nn\\
		\quad +\left(\frac{\epsilon(h)^2\delta^{1/3}}{C_0}\right)^2  \int_{B(0,2h\epsilon(h))} & e^{2\phi_{x_{2,h}}(x)/\tilde{h}} \chi_{2,h}^2(x) \chi_{h}^2(x) |(-h^2\Delta +V(x)-E)u(x)|^2 dx   \nn\\
		\quad +\left(\frac{\epsilon(h)^2\delta^{1/3}}{C_0}\right)^2  \int_{B(0,2h\epsilon(h))} & e^{2\phi_{x_{1,h}}(x)/\tilde{h}}  |[-h^2\Delta, \chi_{h}\chi_{1,h}]u(x)|^2 dx   \nn\\
		\quad +\left(\frac{\epsilon(h)^2\delta^{1/3}}{C_0}\right)^2  \int_{B(0,2h\epsilon(h))} & e^{2\phi_{x_{2,h}}(x)/\tilde{h}}  |[-h^2\Delta, \chi_{h}\chi_{2,h}]u(x)|^2  dx.  \nn\\
	\end{align}
	
	By using a density argument, we can replace $u$ by $u_h$ our solution to $(-h^2\Delta +V(x)-E)u_h=0$. We get:
	
	\begin{align}
	&\left(\frac{\epsilon(h)^2\delta^{1/3}}{C_0}\right)^2  \int_{B(0,2h\epsilon(h))}  e^{2\phi_{x_{1,h}}(x)/\tilde{h}} \chi_{1,h}^2(x) \chi_{h}^2(x) |(-h^2\Delta +V(x)-E)u_h(x)|^2 dx=0,  \nn\\
	&\left(\frac{\epsilon(h)^2\delta^{1/3}}{C_0}\right)^2  \int_{B(0,2h\epsilon(h))}  e^{2\phi_{x_{2,h}}(x)/\tilde{h}} \chi_{2,h}^2(x) \chi_{h}^2(x) |(-h^2\Delta +V(x)-E)u_h(x)|^2 dx=0.   \nn\\
	\end{align}
	
	For $j=1\, \mbox{or}\, 2$:
	\begin{align}
		|[-h^2\Delta, \chi_{h}\chi_{j,h}]u(x)|^2 &\leq C_j (|h^2\Delta(\chi_{j,h}\chi_{h})u|^2 +|h\nabla (\chi_{j,h}\chi_{h})h\nabla u|^2)\nn\\
		&\leq C_j(|h^2\Delta(\chi_{j,h}\chi_{h})u|^2 +h^2||\nabla (\chi_{j,h}\chi_{h})||_{L^\infty}^2|h\nabla u|^2)
	\end{align}
	
	with $C_j>0$.
	From the Cacciopoli lemma (prop~\ref{prop: Caccio}) applied to $\nabla u_h$ with a cut-off function on the support of $\nabla (\chi_{i,h}\chi_{h})$ which is in $(\tilde{B}_{i,h}\setminus B_{i,h})\bigcup (B(0,\frac{5}{3}h\epsilon(h))\setminus B(0,\frac{4}{3}h\epsilon(h)))$,  we can assume that:
	
	\begin{align}
		&\left(\frac{\epsilon(h)^2\delta^{1/3}}{C_0}\right)^2  \int_{B(0,2h\epsilon(h))} e^{2\phi_{x_{i,h}}(x)/\tilde{h}}  |[-h^2\Delta, \chi_{h}\chi_{i,h}]u(x)|^2  dx  \nn\\
		&\leq\left(\frac{\epsilon(h)^2}{C_0}\right)^2\int_{\mathcal{A}_{i,h}\cup A(0,h\epsilon(h),2h\epsilon(h))} \frac{C}{\epsilon(h)^4}| u(x)|^2e^{2\frac{\phi_{x_{i,h}}(x)}{\tilde{h}}} dx   
	\end{align}
	
	where $\mathcal{A}_{i,h}=A(x_{i,h},r_{1,i,h},r_{2,i,h})$ is a ring centre on $x_{i,h}$ with radius small proportionally to $h\epsilon(h)$ containing the support of $\nabla\chi_{i,h}$ (we can choose $\chi_{j,h}$ such that we can take $A(x_{j,h},r_{1,j,h},r_{2,j,h})=(\tilde{B}_{j,h}\setminus B_{j,h})$) and $C >0$. We get:
	
	\begin{align}
		\int_{B(0,2h\epsilon(h))} e^{2\phi(x)/\tilde{h}} |\chi_{h}(x) u_h(x)|^2 dx &= \int_{B(0,2h\epsilon(h))} e^{2\phi_{x_{1,h}}(x)/\tilde{h}} \chi_{1,h}^2(x) |\chi_{h}(x) u_h(x)|^2 dx \nn\\
		&+\int_{B(0,2h\epsilon(h))} e^{2\phi_{x_{2,h}}(x)/\tilde{h}} \chi_{2,h}^2(x) |\chi_{h}(x) u_h(x)|^2 dx\nn\\
		&\leq  \left(\frac{C}{C_0}\right)^2\int_{A(x_{1,h},r_{1,1,h},r_{2,1,h})} e^{2\phi_{x_{1,h}}(x)/\tilde{h}}  |u_h(x)|^2 dx   \nn\\
		&+   \left(\frac{C}{C_0}\right)^2\int_{A(x_{2,h},r_{1,2,h},r_{2,2,h})} e^{2\phi_{x_{2,h}}(x)/\tilde{h}}  |u_h(x)|^2  dx   \nn\\
		&+ \left(\frac{C}{C_0}\right)^2\int_{ A(0,h\epsilon(h),2h\epsilon(h))} e^{2\phi_{x_{1,h}}(x)/\tilde{h}}  |u_h(x)|^2 dx   \nn\\
		&+   \left(\frac{C}{C_0}\right)^2\int_{ A(0,h\epsilon(h),2h\epsilon(h))} e^{2\phi_{x_{2,h}}(x)/\tilde{h}}  |u_h(x)|^2  dx.   \nn\\
	\end{align}
	
	With our choice of $\chi_{1,h}$ and $\chi_{2,h}$, then we have $A(x_{1,h},r_{1,1,h},r_{2,1,h})\subset Supp(\chi_{2,h})$ and\\
	$A(x_{2,h},r_{1,2,h},r_{2,2,h})\subset Supp(\chi_{1,h})$. Moreover, $\phi_{x_{1,h}}$ is smaller than $\phi_{x_{2,h}}$ on $A(x_{1,h},r_{1,1,h},r_{2,1,h})$ and $\phi_{x_{2,h}}$ is smaller than $\phi_{x_{1,h}}$ on $A(x_{2,h},r_{1,2,h},r_{2,2,h})$. By taking $\delta$ small enough, we obtain: 
	
	\begin{align}
		&\left(\frac{C}{C_0}\right)^2\int_{ A(x_{1,h},r_{1,1,h},r_{2,1,h})} e^{2\phi_{x_{1,h}}(x)/\tilde{h}}  |u_h(x)|^2 dx   \nn\\
		&\leq \frac{1}{2} \int_{B(0,2h\epsilon(h))} e^{2\phi_{x_{2,h}}(x)/\tilde{h}} \chi_{2,h}^2(x) |\chi_{h}(x) u_h(x)|^2 dx \nn\\
	\end{align}
	
	and
	
	\begin{align}
		&\left(\frac{C}{C_0}\right)^2\int_{ A(x_{2,h},r_{1,2,h},r_{2,2,h})} e^{2\phi_{x_{2,h}}(x)/\tilde{h}}  |u_h(x)|^2 dx   \nn\\
		&\leq \frac{1}{2} \int_{B(0,2h\epsilon(h))} e^{2\phi_{x_{1,h}}(x)/\tilde{h}} \chi_{2,h}^2(x) |\chi_{h}(x) u_h(x)|^2 dx. \nn\\
	\end{align}
	
	Thus:
	
	\begin{align}
		\int_{B(0,h\epsilon(h))} e^{2\phi(x)/\tilde{h}} |u_h(x)|^2 dx\leq
		& 2\left(\frac{C}{C_0}\right)^2\int_{ A(0,h\epsilon(h),2h\epsilon(h))} e^{2\phi_{x_{1,h}}(x)/\tilde{h}}  |u_h(x)|^2 dx   \nn\\
		& + 2\left(\frac{C}{C_0}\right)^2\int_{ A(0,h\epsilon(h),2h\epsilon(h))} e^{2\phi_{x_{2,h}}(x)/\tilde{h}}  |u_h(x)|^2 dx. 
	\end{align}
	
	Finally, since $|x|$ is bounded by $2h\epsilon(h)$, we have \\
	$1\leq e^{2\phi_{x_{1,h}}(x)/\tilde{h}},e^{2\phi_{x_{2,h}}(x)/\tilde{h}},e^{2\phi(x)/\tilde{h}} 
	\leq e^{(4h\epsilon(h))^{4/3}/\epsilon(h)^{4/3}h^{4/3}\delta^{1/3}} \leq C(\delta)$. $\delta$ is independent of $h$, so we can fix it to obtain the desired inequality: 
	
	\begin{equation}
		\int_{B(0,h\epsilon(h))} | u_h(x)|^2 dx \leq C \int_{ A(0,h\epsilon(h),2h\epsilon(h))}  |u_h(x)|^2 dx   
	\end{equation}
	
	with $C>0$.
	
\end{proof}

\bibliographystyle{elsarticle-harv} 
\typeout{}
\bibliography{references}

\end{document}